\newtheorem{theorem}{Theorem}[section]
\newtheorem{lemm}[theorem]{Lemma}
\newtheorem{lemma}[theorem]{Lemma}
\newtheorem{coro}[theorem]{Corollary}
\newtheorem{prop}[theorem]{Proposition}
\newtheorem{defi}[theorem]{Definition}
\newtheorem{defn}[theorem]{Definition}
\newtheorem{nota}[theorem]{Notation}
\newtheorem{rema}[theorem]{Remark}
\def\E{\mathbb{E}}
\def\R{\mathbb{R}}
\def\P{\mathbb{P}}
\def\N{\mathbb{N}}
\def\F{\mathcal{F}}
\def\B{\mathbb{B}}
\def\1{\mathbbm{1}}
\def\Ee{\mathcal{E}}
\def\Ss{\mathcal{S}}
\def\Dd{\mathcal{D}}
\def\t{\tau_\eps}   
\def\g{h^\eps}   
\def\gi{h^\infty}   
\def\da{(-\Delta)^\alpha}   
\def\dea{\Delta_{\eps,\alpha}}   
\def\fgf{\text{FGF}_\alpha(D)}   
\def\eps{\varepsilon}
\def\Nn{\mathcal{N}}
\def\Gg{\mathcal{G}}
\def\Ff{\mathcal{F}}
\def\t{\tau_\eps}   
\def\g{h^\eps}   
\def\gi{h^\infty}   
\def\restriction#1#2{\mathchoice
              {\setbox1\hbox{${\displaystyle #1}_{\scriptstyle #2}$}
              \restrictionaux{#1}{#2}}
              {\setbox1\hbox{${\textstyle #1}_{\scriptstyle #2}$}
              \restrictionaux{#1}{#2}}
              {\setbox1\hbox{${\scriptstyle #1}_{\scriptscriptstyle #2}$}
              \restrictionaux{#1}{#2}}
              {\setbox1\hbox{${\scriptscriptstyle #1}_{\scriptscriptstyle #2}$}
              \restrictionaux{#1}{#2}}}
\def\restrictionaux#1#2{{#1\,\smash{\vrule height .8\ht1 depth .85\dp1}}_{\,#2}}
\newcommand{\corr}[1]{\color{black}{#1}\color{black}\xspace}
\title[Characterisations of GFF and FGF via dynamics]{A martingale-type characterisation of the Gaussian free field and fractional Gaussian free fields}
\author{Juhan Aru and Guillaume Woessner}
\date{}
\begin{document}

\begin{abstract}

We establish martingale-type characterisations for the continuum Gaussian free field (GFF) and for fractional Gaussian free fields (FGFs), using their connection to the stochastic heat equation and to fractional stochastic heat equations. 

The main theorem on the GFF generalizes previous results of similar flavour and the characterisation theorems on the FGFs are new. The proof strategy is to link resampling dynamics coming from a martingale-type decomposition to the stationary dynamics of the desired field, i.e. to the (fractional) stochastic heat equation.
\end{abstract}

\maketitle


\section{Introduction}

We prove martingale-type characterisation results for the continuum Gaussian free field (GFF) and for fractional Gaussian free fields (FGFs), using their connection to the stochastic heat equation and to fractional stochastic heat equations. 

Such characterisation theorems are well-known in the case of Brownian motion - e.g. the identification as the only continuous process with independent stationary increments or as the only continuous semimartingale with linear quadratic variation - and they help to explain the central role of Brownian motion in probability theory. One could ask if similar characterisations for the $d$-dimensional continuum Gaussian free field and its fractional counterparts might also bring some insight into its universality - for example the GFF is both the proven and the conjectured scaling limit of several natural discrete height functions \cite{NS, Kenyon_GFF, RiderVirag,BLR}. Characterisation theorems also help to understand better which properties are central to the underlying object.

\subsection{The case of the Gaussian free field}
The Dirichlet boundary condition continuum Gaussian Free Field (abbreviated GFF), which can be viewed as a generalization to higher dimensions of the Brownian Motion, is defined as follows. 
\begin{defi}
\label{GFFdef}
The GFF on a bounded regular domain $D$ is the centred Gaussian process $(h^D,f)_{f\in C_c^\infty(D)}$ whose covariance is given by
\begin{align*}
\E[(h^D,f)(h^D,g)]=\int_D\int_D f(x)G_D(x,y)g(y)dxdy.
\end{align*}
\end{defi}
Here, a regular domain is any connected open subset of $\R^d$ with finitely many boundary components, such that for every $x\in\partial D$ the frontier of a domain $D$, the Brownian motion $B_t$ starting from $x$ satisfies $\inf\{t>0 ~:~B_t \in D^c\}=0$ almost surely; this guarantees that the Green's function is well defined and positive definite. Further, boundedness guarantees that the Green's integral operator is compact. See \cite{BP, PW} for more detailed yet friendly introductions.One can also show that there exists a modification of the GFF which is almost surely in $\Ss_D$\footnote{$\Ss_D$ is the topological space of all distributions of compact support in $D$}. \corr{Further, thoughout the paper the zero-boundary Green function \(G_D\)
is normalized so that
\[
G_D(x,y)-\Phi_d(|x-y|)
\]
is bounded near \(x\), where
\[
\Phi_d(r)=
\begin{cases}
\dfrac{1}{2\pi}\log(1/r), & d=2,\\[1.2ex]
\dfrac{1}{(d-2)A_d}r^{2-d}, & d\ge3,
\end{cases}
\]
and \(A_d=|\mathbb S^{d-1}|\).}

In \cite{BPR18} the authors showed that for $d=2$, the GFF is the only conformally invariant field satisfying a domain Markov property under some continuity and moments assumptions; these moment assumptions were then lowered in \cite{BPR20}. Further, in \cite{AruPowell}, the authors proved a characterisation of the GFF in $d\geq 2$, which heuristically corresponds to the stationary independent increments property of Brownian motion. The proof there is simple, but is tailored to balls, or other domains with symmetry.

The first contribution of this paper is to prove a martingale-type characterisation for the continuum GFF, which also works in a more general geometric setting than the earlier works \cite{BPR18, BPR20, AruPowell}, e.g. in multiply connected domains. We prove that essentially the GFF is characterized by the following martingale-type decomposition property - in any ball the field can be written as a sum of a harmonic function and a zero mean, zero-boundary field with certain scaling. This is considerably weaker than the domain Markov property assumed in \cite{BPR18,AruPowell}, see the Remark \ref{remarkaconstant} below. A more precise statement of our decomposition goes as follows: 
\begin{defi} [Martingale-type decomposition]
\label{defiMTD}
We say that a random distribution $h^D\in \Ss_D$ admits the \emph{martingale type decomposition} (abbreviated MTD) if for any ball $B:=B(x,\eps) \subset D$ we have the decomposition
$$ h^D = \tilde{h}^B + \varphi_D^B,$$
where $\varphi_D^B$ belongs to $\Ss_D$ and corresponds to integrating with respect to a harmonic function when restricted to $B$ ; and $\tilde{h}^B$ belongs to $\Ss_B$ and satisfies the following conditions, where $\restriction{T}{D'}$ is the restriction of a distribution $T$ to the subdomain $D'$.
\begin{enumerate}
\item The martingale property: for any $f \in C^\infty_c(D)$
$$\E\left[(\tilde{h}^B,f) ~|~\sigma\big(\restriction{(h^D)}{D\setminus B}\big)\right] = 0 .$$
\item A condition on the second moments of the increments. First, the squared increment is deterministic, i.e. for any $f \in C^\infty_c(D)$
$$ \E\left[(\tilde{h}^B,f)^2|~\restriction{(h^D)}{D\setminus B}\right] = \E\left[(\tilde{h}^B,f)^2\right]. $$
Second, we have weak scaling: for some $C > 0$ and some positive bounded measurable function $a: D \to (0, \infty)$ 
\begin{align*}
& \E\left[(\tilde{h}^B,f)^2~|~\restriction{(h^D)}{D\setminus B}\right] \leq C \sup_{z\in D}|f(z)|^2\eps^{2+d};
& \frac{\E\left[(\tilde{h}^B,f)^2~|~\restriction{(h^D)}{D\setminus B}\right]}{\eps^{2+d}} \to_{\eps \to 0} a^2(x)f^2(x)
\end{align*}
where the convergence is uniform on compacts\footnote{One should note that $B$ depends on $\epsilon$.}.
\item The field $\tilde{h}^B$ satisfies zero boundary conditions in the sense defined just below. 
\end{enumerate}
\end{defi}

We will in fact have to add a weak condition on the fourth moment of $\tilde{h}^B$ too, though possibly this can be relaxed further. Also, both in this definition and in the theorem below we have to define what it means for a distribution to have zero boundary conditions. We follow here \cite{AruPowell}:

\begin{defi}[Zero boundary conditions]
\label{defiZB}
We say that a random distribution $h^D\in \Ss_D$ satisfies zero boundary conditions in the $L^2$ sense if the following holds. 

Consider a sequence $(g_n)_{n\geq 1}$ of positive and of uniformly bounded mass functions of $C^\infty_c(D)$ such that $\sup_{n\geq 1}\sup_{r>0}\sup_{x,y\in D_r} |g_n(x)/g_n(y)| < \infty$ where $D_r := \{z\in D\,:\, d(z,\partial D)=r\}$, and for every compact $K\subset D$ it holds that $K\cap \emph{supp}(g_n) = \emptyset$ for $n$ large enough. Then we have that $\E[(h^D,g_n)^2] \mapsto 0$ as $n\mapsto\infty$.
\end{defi}

\noindent The characterisation theorem can be now stated:
\begin{theorem}
\label{mainthm}
Let $d\geq 2$ and $D$ be a regular domain of $\R^d$. Suppose that a random distribution $h^D\in \Ss_D$ satisfies the following properties
\begin{itemize}
\item[A.] \underline{\emph{Martingale-type decomposition}} : as in Definition \ref{defiMTD}, and satisfying in addition for some $C_x > 0$ that is bounded on compacts
$$ \E\left[(\tilde{h}^{B(x,\eps)},f)^4\right] \leq C_x^4\sup_{z\in D}|f(z)|^4\eps^{2(2+d)}.$$
\item[B.] \underline{\emph{Uniformly bounded 4th moments}} : there is some $C_0 = C(D) > 0$ such that for any $f\in C_c^\infty(D)$, we have 
$$\E[(h^D,f)^4] \leq C_0^2 \sup_{z\in D}|f(z)|^4.$$
\item[C.] \underline{\emph{Zero boundary conditions}} as in the Definition \ref{defiZB}.
\end{itemize}
Then there exists $c\geq 0$ such that $c\,h^D$ is a GFF. 
\end{theorem}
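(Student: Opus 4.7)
The plan is to follow the strategy outlined in the abstract: use the MTD to construct a resampling dynamics that preserves the law of $h^D$, show that in an appropriate scaling limit this dynamics converges to a stochastic heat equation on $D$ with zero boundary, and then conclude via the characterisation of the Gaussian stationary measure of such an SHE as a multiple of the Dirichlet GFF.

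First, I would use the MTD to set up a discrete-time Markov chain $(H_n)_{n \geq 0}$ starting from $H_0 = h^D$: at each step, pick a small ball $B_n = B(x_n,\eps) \subset D$ from a fixed distribution (for instance with centres uniform over a covering of $D$) and replace the local piece $\tilde{h}^{B_n}$ by an independent copy drawn from its unconditional law, keeping $\varphi_D^{B_n}$ unchanged. Item (1) of Definition \ref{defiMTD} forces $\varphi_D^{B_n}$ to be the conditional expectation of $h^D$ given $\restriction{h^D}{D \setminus B_n}$, while item (2) forces the conditional variance of $(\tilde{h}^{B_n},f)$ to be deterministic; together these imply that one resampling step leaves the joint law invariant, so $H_n \stackrel{d}{=} h^D$ for every $n$. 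The 4th-moment bounds in assumptions A and B provide the quantitative control needed below.

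Second, I would rescale time by performing of order $\eps^{-(d+2)}$ resamplings per unit time so that fluctuations accumulate at order one, and extract a continuous-time limit $h_t$ of the rescaled processes $H^\eps_t$, with tightness following from the moment assumptions. The key computation is identifying the generator: the conditional mean of a single update in $B(x,\eps)$ is $-\tilde{h}^{B(x,\eps)}$, and a local Taylor expansion of the harmonic extension gives $-\tilde{h}^{B(x,\eps)}(y) = \tfrac{\Delta h(x)}{2d}(\eps^2 - |y-x|^2) + O(\eps^3)$, producing a drift of the form $c_d\,\Delta h$ after time-rescaling; the martingale part, via the variance scaling $\eps^{d+2} a^2(x) f^2(x)$ together with the 4th-moment bound, converges through a martingale CLT to a space-time Gaussian white noise of intensity proportional to $a^2(x)$. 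Combined with the zero boundary carried over from assumption C and Definition \ref{defiZB}, this identifies $h_t$ as a solution of $\partial_t h_t = c_d\,\Delta h_t + a(x)\,\xi$ on $D$ with zero Dirichlet boundary.

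Third, since $H^\eps_t \stackrel{d}{=} h^D$ for every $t$ and $\eps$, the limit $h_t$ is time-stationary, so the law of $h^D$ is an invariant Gaussian measure of this linear SPDE. Its covariance kernel $C$ then satisfies $\Delta_x C(x,y) + \Delta_y C(x,y) = -a(x)^2\,\delta(x-y)$ with zero boundary; comparing this with the Green's function equation $-\Delta G_D = \delta$ forces $a(x)$ to be constant and consequently $C$ to be a scalar multiple of $G_D$. Choosing $c \geq 0$ so that $c^2 C = G_D$ then yields $c\,h^D$ equal in law to the Dirichlet GFF. The main technical obstacle I anticipate is making the scaling limit rigorous at the level of random distributions: proving tightness in a suitable functional topology on $\Ss_D$-valued processes, ruling out residual correlations from the resampling that could colour the limiting noise, and verifying that the limiting equation is the \emph{linear} SHE without spurious extra terms. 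The 4th-moment assumptions in A and B appear tuned precisely to the Lyapunov condition in the martingale CLT used both to identify the Gaussian white-noise driver and to transfer Gaussianity back to the one-time marginal $h^D$.
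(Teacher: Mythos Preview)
Your overall strategy matches the paper's: build a resampling dynamics that preserves the law of $h^D$, take a scaling limit to an SHE on $D$, and identify $h^D$ with its stationary law. Two corrections are needed, one minor and one substantive.

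The minor one: resampling $\tilde h^{B_n}$ from its \emph{unconditional} law does not in general preserve the law of $h^D$. The MTD only fixes the first two conditional moments of $\tilde h^B$ given $\restriction{h^D}{D\setminus B}$, not the full conditional law, so $\tilde h^B$ need not be independent of $\varphi_D^B$. The paper instead resamples $\hat h^{B_n}$ from the \emph{conditional} law given $\varphi_D^{B_n}$, which tautologically keeps the pair $(\hat h,\varphi)$, and hence $h^D=\hat h+\varphi$, invariant in law.

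The substantive gap is in your last step. The Lyapunov equation $\Delta_x C + \Delta_y C = -a^2(x)\,\delta(x-y)$ for the stationary covariance does \emph{not} force $a$ to be constant: for any bounded measurable $a$ it is solved by $C(x,y)=\sum_{n,m}\frac{\langle af_n,af_m\rangle}{\mu_n+\mu_m}f_n(x)f_m(y)$, and nothing in your argument shows this $C$ is a multiple of $G_D$. The paper closes this by a separate computation that bypasses the SHE entirely: from the MTD and harmonicity of $\varphi_D^B$ one shows the two-point function $k_D(x,\cdot)$ is harmonic in $D\setminus\{x\}$, has the correct blow-up at $x$, and satisfies zero boundary, hence $k_D(x,\cdot)=b(x)\,G_D(x,\cdot)$; symmetry of $k_D$ then forces $b$ constant. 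You need this (or an equivalent) on top of the dynamics argument. Relatedly, your pointwise Taylor expansion for $-\tilde h^{B(x,\eps)}$ is only heuristic since $h$ is distribution-valued; the paper replaces it by an approximate Laplacian $\Delta_\eps$ built from Poisson extensions, proves $(\Delta_\eps g,f)\to(\Delta g,f)$ uniformly over bounded families of smooth $g$, and uses the integration-by-parts identity $(\Delta_\eps h^D,f)=(h^D,\Delta_\eps f)$ to transfer the convergence to $h^D$.
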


\begin{rema}
\label{remarkaconstant}
Compared to \cite{AruPowell} (and \cite{BPR18}) the main difference is the weakening of the domain Markov property. Indeed, in these articles the field was decomposed in every ball as a sum of a harmonic function and an independent field that was a rescaled version of the original field. Here we do not demand independence of the decomposition nor ask much from the law of the non-harmonic part, other than being centred, with zero boundary and satisfying some scaling. The latter is necessary, as otherwise white noise would satisfy the conditions too. 

We also allow for a certain possible non-homogeneity with a non-constant $a(x)$ in the MTD; yet one should emphasise that our theorem shows that for all the fields that do satisfy MTD with our additional conditions, the function $a$ ends up being constant. 

Further notice that the 4th moment condition in A and B is asking more than \cite{AruPowell, BPR20} in terms of moments: indeed, the uniformity we ask here was not part of their assumptions. One could get away with less, e.g. by assuming a uniform bound only for certain symmetric mollifiers; this would already very simply follow from the stronger Markov decomposition in \cite{AruPowell, BPR20} combined with just the finiteness of fourth moments.

Finally, a simple sanity check is that the GFF itself does satisfy the conditions of the theorem with constant $a = \sqrt{\kappa_d}:=\sqrt{\frac{\text{Vol}(B(0,1))}{d(d+2)}}$.
\end{rema}
\noindent The proof strategy is as follows: 

\begin{itemize}
\item the martingale-type decomposition gives us a natural way to resample the field inside balls and using a Poisson point process of these resamplings on randomly located balls we construct a stochastic dynamics on random fields, whose marginal law is equal to the initial field for every time $t > 0$ (Definition \ref{defi_dyn});
\item we show that this dynamics when we resample in smaller and smaller balls and with higher and higher frequency converges to the stochastic heat equation (Theorem \ref{thm_dyn});
\item but now the unique stationary solution of the stochastic heat equation is the Gaussian free field and as at any $t > 0$ all of the approximated dynamics have the law of the original field, we conclude that the original field has to be the GFF.
\end{itemize}

In the case where the function $a(x)$ is constant, i.e. we assume some homogeneity in space, we can follow this sketch rather precisely and interestingly - in contrast to \cite{AruPowell, BPR18} we don't need an extra step to identify the covariance kernel. In the case $a(x)$ non-constant, we still need to make use of a separate step to identify the covariance as the Green's kernel, but that is standard by now.

This strategy for proving the characterisation theorem is quite robust. Here we exemplify the generality by using the same strategy to prove a characterisation theorem for the fractional Gaussian fields, e.g. like the fractional Brownian motion, and answering Question 6.5 of \cite{BPR18} for a range of parameters. It would also work for Riemann surfaces with boundary, \corr{partly answering} Question 6.6 of \cite{BPR18}. \corr{More precisely, on a compact Riemannian surface with boundary one would replace Brownian motion, the Green function, harmonic measure and the Laplacian by their intrinsic counterparts. The small-ball part of the argument is local: in normal coordinates the metric is Euclidean up to lower-order errors, and the normalization of the resampling rate absorbs the resulting dimensional constant. The rest of the proof uses only the martingale-type decomposition, the Green representation and the martingale problem for the heat equation, all of which have standard analogues in this setting.}

\subsection{The case of fractional Gaussian fields}

Fractional Gaussian free fields $\text{FGF}_\alpha(D)$ can be defined as follows \cite{fractional_laplacian_review}:
\begin{defi}
\label{FGFdef}
For $\alpha > 0$, the $\text{FGF}_\alpha(D)$ on a bounded domain with smooth boundary is the centred Gaussian process $(h^D_\alpha,f)_{f\in C^\infty_c(D)}$ whose covariance is
\begin{align*}
\E[(h^D_\alpha,f)(h^D_\alpha,g)]=\int_D\int_D f(x)G_D^\alpha(x,y)g(y)dxdy,
\end{align*}
where $G_D^\alpha$ is the fractional Green function associated to the fractional laplacian $(-\Delta)^\alpha$ on $D$.
\end{defi}
The operator $(-\Delta)^\alpha u(z)$ here is the Riesz fractional Laplacian given by
\begin{align}
    \label{laplacianfractionalintegral}
    (-\Delta)^\alpha u(z) = \lim_{\eps\mapsto 0} \int_{\R^d-B(z,\eps)} \frac{u(z)-u(y)}{|z-y|^{d+2\alpha}}dy,
\end{align}
where we extended the values of $u$ outside of $D$ by $0$ (see e.g. \cite{FGF_review, bogdan2009}). It is known that the fractional Green function $G_D^\alpha$ exists; in the case $D = \B$ where $\B$ is the unit ball, it can be explicitly computed, see Section \ref{section_frac_intro}. The case $\alpha = 1$ corresponds to the Gaussian free field, in the case $d = 1$ and $\alpha \in (1/2, 3/2)$ we obtain the fractional Brownian bridges with the Hurst parameter $H = \alpha - d/2$, see \cite{fractional_laplacian_review}. We will concentrate on $d =2$ and the range $\alpha \in (0,1)$ where the Riesz laplacian is the generator of a $2\alpha-$stable L\'evy process. \footnote{Notice that our parameter $\alpha$ is exactly the parameter $s$ in \cite{FGF_review}, but we chose this notation to be more coherent with the analysis literature (although there is a factor of two difference with for example \cite{fractional_laplacian_review}).}
We believe the same strategy also applies in dimension one at least in the regime $1/2<\alpha<1$. However, as this part of the paper is foremost a showcase that the strategy does generalise, we leave further extensions out - the 1d case would require separate notations and some separate technical estimates, e.g. in Lemma \ref{lemme_control_c}, yet because of exact expressions they should easily follow.

In the fractional case the characterisation theorems have not been explored too much; even in the case of fractional Brownian motion characterisation theorems are relatively new - \cite{mishura, hu2009}. \corr{Still, there has been renewed interest in processes related to the fractional Laplacian for $d\geq 2$ - e.g. \cite{FGF_review, cao2024, gunaratnam2024} but also \cite{garban2024} - and thus we hope that our statements and proofs would contribute to our general understanding of what does and what does not generalise when passing from the standard Laplacian to the fractional versions.}
\\

In essence the characterisation of the GFF obtained in Theorem \ref{mainthm} generalizes nicely to fractional Gaussian free fields $\text{FGF}_\alpha(D)$ in the regime $0 < \alpha < 1$. Indeed, we prove that the fractional Gaussian free field is characterised by having a martingale-type decomposition on each ball: the field inside is given by a $\alpha-$harmonic function plus a zero mean, zero boundary field as before. The reason for restricting to $\alpha \leq 1$ (we have already dealt with $\alpha = 1$ as it is the GFF case) is the following. When $\alpha > 1$,  the simple mean value property for $\alpha$-harmonic functions fails. Even though one can define some version of the Poisson kernel \cite{abatangelo2018integral}, it becomes too singular to extend arbitrary boundary values and in fact extra terms are needed: see the discussion below Theorem 1.1 and Theorem 1.6 in  \cite{abatangelo2018integral}.

\begin{defi}
\label{defiFMTD}
Let $d\geq 2$ and $\alpha \in (0,1)$. We say that a random distribution $h^D\in \Ss_D$ admits the \emph{(fractional) martingale type decomposition} (abbreviated FMTD) if for any ball $B:=B(x,\eps) \subset D$ we have the decomposition
$$ h^D = \tilde{h}^B + \varphi_D^B,$$
where $\varphi_D^B$ belongs to $\Ss_D$, corresponds to integrating with respect to an $\alpha$-harmonic function (ie $(-\Delta)^\alpha \varphi_D^B =0$) when restricted to $B$ 
and $\tilde{h}^B$ belongs to $\Ss_B$ and satisfies the following conditions. 

\begin{enumerate}
\item The martingale property: for any $f \in C^\infty_c(D)$
$$\E\left[(\tilde{h}^B,f) ~|~\sigma\big(\restriction{(h^D)}{D\setminus B}\big)\right] = 0 .$$
\item A condition on the second moments of the increments: the squared increment is deterministic, i.e. for any $f \in C^\infty_c(D)$
$$ \E\left[(\tilde{h}^B,f)^2|~\restriction{(h^D)}{D\setminus B}\right] = \E\left[(\tilde{h}^B,f)^2\right] $$
and we have weak scaling: for some $C > 0$ 
\begin{align*}
& \E\left[(\tilde{h}^B,f)^2~|~\restriction{(h^D)}{D\setminus B}\right] \leq C \sup_{z\in D}|f(z)|^2\eps^{2\alpha+d};
& \frac{\E\left[(\tilde{h}^B,f)^2~|~\restriction{(h^D)}{D\setminus B}\right]}{\eps^{2\alpha+d}} \to_{\eps \to 0} f^2(x)
\end{align*}
where the convergence is uniform on compacts.
\item The field $\tilde{h}^B$ satisfies zero boundary conditions in the sense of Lemma \ref{defiZB}. 
\end{enumerate}
\end{defi}
\begin{rema}Notice that we have given up here the potential local inhomogeneity allowed by $a: D \to [0,\infty)$ and normalized it to be $1$. In the case of the GFF - as we saw in Remark \ref{remarkaconstant} -  there were in the end no fields with $a(x)$ non-constant, and it does simplify our life as we can skip the step of determining the covariance, which would require a bit more potential theory of the fractional laplacian.
\end{rema}

There is one key difference from the GFF case stemming from the fact that in general the fractional Laplacian is non-local. Namely, for $\alpha \in (0,1)$ there is no localised mean-value property for $\alpha$-harmonic functions, i.e. for an $\alpha-$harmonic function $f$ we cannot express $f(z)$ by averaging $f$ against some smooth function in any neighbourhood of $z$. Indeed, the $\alpha-$mean kernel is both non-local and non-smooth, as can be seen from the exact expression in Section \ref{section_frac_intro}. As our fields are a priori distribution valued in the range of interest, this causes some annoyances and one would need to ask for more regularity - basically that integrating the field against the mean-value kernel is well defined. As adding just this as a condition didn't feel very natural, we opted to further strengthen the martingale-type property as well - we assume that the $\alpha-$harmonic function in any ball is given by the $\alpha-$Poisson extension of the field outside the ball (in the GFF case this is obtained as a consequence).

\begin{theorem}
\label{mainthmfrac}
Let $d\geq 2$ and $\alpha \in (0,1)$. Suppose that a random distribution $h^D\in \Ss_D$ has the following property
\begin{itemize}
\item[A'.] \underline{\emph{FMTD}} : as in Definition \ref{defiFMTD}, and satisfying in addition for some $C > 0$
$$ \E\left[(\tilde{h}^B,f)^4\right] \leq C^4\sup_{z\in B}|f(z)|^4\eps^{2(2\alpha+d)}.$$
We further assume that for all balls $B \subset D$ we can evaluate $h^D$ against the fractional Poisson kernel $P_B^\alpha(z,\cdot)$ \footnote{See \eqref{def_poisson_int_fractional} for a definition of $P_B^\alpha$. Here to evaluate  means that for any sequence of mollifiers $(\alpha_n)_n$ the limit $\lim_n h^D(P_B^\alpha(z,\cdot)\ast \alpha_n)$ exists and does not depend on the choice of the $\alpha_n$.}, and moreover we have $$(h^D,P^\alpha_{B}(z,\cdot))=\varphi_D^B(z).$$
\item[B'.] \underline{\emph{Uniformly bounded 4th moments}} : there exists a constant $C_0>0$ such that for any $f\in C^\infty_c(D)$, we have 
$$\E[(h^D,f)^4] \leq C_0^2 \sup_{z\in D}|f(z)|^4.$$
\item[C'.] \underline{\emph{Zero boundary conditions}} as in the Definition \ref{defiZB}.
\end{itemize}
Then there exists $c\geq 0$ such that $c\,h^D$ is a $\text{FGF}_\alpha(D)$. 
\end{theorem}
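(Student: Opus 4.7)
I adapt the three-step strategy sketched after Theorem~\ref{mainthm} to the non-local fractional setting: construct a resampling Markov dynamics on $\Ss_D$ that leaves the law of $h^D$ invariant, send the resampling scale to zero to recover the fractional stochastic heat equation
\begin{equation*}
\partial_t u = -\da u + \dot W, \qquad u \equiv 0 \text{ on } \partial D,
\end{equation*}
driven by space--time white noise on $D$, and invoke uniqueness of its stationary measure (a constant multiple of $\fgf$) to conclude. Concretely, fix $\eps_n \downarrow 0$ and $\lambda_n \asymp \eps_n^{-(d+2\alpha)}$. Let $\Pi_n$ be a Poisson point process on $D \times \R_+$ with intensity $\lambda_n\,dx\,dt$; starting from $\g_0 = h^D$, at each atom $(x,t) \in \Pi_n$ with $B := B(x,\eps_n) \subset D$ I apply assumption A' to decompose $\g_{t^-} = \tilde h^B + \varphi_D^B$ with $\varphi_D^B(z) = (\g_{t^-}, P_B^\alpha(z,\cdot))$ a measurable functional of $\restriction{\g_{t^-}}{D\setminus B}$, and replace $\tilde h^B$ by a fresh sample from its regular conditional distribution given $\restriction{\g_{t^-}}{D\setminus B}$. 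Each update preserves the joint law by definition of conditional distribution, so $\g_t \stackrel{d}{=} h^D$ for every $t \geq 0$ and every $n$.

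\textbf{Scaling limit.} Testing against $f \in C^\infty_c(D)$, one computes that the conditional drift of $(\g_t, f)$ equals $-(\g_t, F_n)$, where
\begin{equation*}
F_n(y) := \lambda_n \int_D \bigl(f(y)\mathbbm{1}_{B(x,\eps_n)}(y) - T_{B(x,\eps_n)}^\alpha f(y)\mathbbm{1}_{D\setminus B(x,\eps_n)}(y)\bigr)\,dx, \qquad T_B^\alpha f(y) := \int_B f(z) P_B^\alpha(z,y)\,dz.
\end{equation*}
Standard fractional potential-theoretic asymptotics for $P_B^\alpha$ produce a cancellation of the leading $O(\eps_n^d)$ contributions and leave the next-order $O(\eps_n^{d+2\alpha})$ term, so that with the chosen $\lambda_n$ one has $F_n \to c_1 \da f$ uniformly on compacts of $D$; the drift of $(\g_t, f)$ in the limit is therefore $-c_1(\g_t, \da f)$. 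The accumulated conditional quadratic variation, controlled via item (2) of FMTD integrated against intensity $\lambda_n$, converges to $c_2\|f\|_{L^2}^2\,dt$, matching the variance of space--time white noise tested against $f$, while the fourth-moment bound in A' supplies the Lindeberg condition for the ensuing martingale CLT, ensuring Gaussianity of the noise in the limit. Tightness at the level of the process $(\g_t)_{t\geq 0}$ follows from the uniform fourth-moment bound B'. The limit thus solves the fractional SHE above with zero boundary conditions (inherited from C'); since this equation has a unique stationary distribution, which is a constant multiple of $\fgf$, and $\g_t \stackrel{d}{=} h^D$ for all $t$ and $n$, I conclude that the law of $h^D$ is a constant multiple of $\fgf$.

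\textbf{Main obstacles.} The delicate part will be identifying the limit drift as $-c_1 \da$: this requires a careful small-scale analysis of the fractional Poisson kernel $P_B^\alpha$, and in particular the non-locality of $T_B^\alpha$ means that balls $B(x,\eps_n)$ far from $y$ still contribute to $F_n(y)$ and must assemble into precisely the heavy-tail singular integral defining $\da$ in \eqref{laplacianfractionalintegral}. Other technicalities include running a martingale CLT for a distribution-valued dynamics with only fourth-moment control (adapting the GFF argument) and verifying that the zero boundary condition C' survives the limiting procedure, despite the singularity of $P_B^\alpha(z,\cdot)$ near $\partial D$ when $B$ is close to the boundary.
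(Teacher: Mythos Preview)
Your proposal is correct and follows essentially the same route as the paper: Poissonian resampling at rate $\asymp \eps^{-(d+2\alpha)}$, identification of the drift as (a constant times) $-\da$ via the small-ball asymptotics of $P_B^\alpha$, a martingale CLT for the noise using the fourth-moment bound in A', and conclusion by uniqueness of the stationary law of the fractional SHE. The only cosmetic difference is that you compute the drift on the test-function side (your $F_n$), whereas the paper defines the approximate operator $\Delta_{\eps,\alpha}$ directly on the field and proves convergence to $-\da$ by splitting the Poisson integral into far-field $|y-z|>2\eps$ (which yields the singular kernel of $\da$) and near-field $|y-z|<2\eps$ (shown negligible); the two formulations are linked by an integration-by-parts identity, and tightness in the paper uses that identity together with B' rather than B' alone.
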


\begin{rema}
Notice that as $h^B$ is supported in $B$, the final part in condition A' can also be written as $(\varphi_D^B, P^\alpha_{B}(z, \cdot)) = \varphi_D^B(z)$.

The fact that the fractional Gaussian free field itself satisfies these conditions can be again checked explicitly, for the zero boundary conditions one can use the estimates in \cite[Thm~1.1]{ChenSong}.
\end{rema}

As mentioned earlier, the proof of the above theorem stays really pretty much the same. There are only some minor simplifications due to the restated martingale decomposition, and some modifications when proving approximations of $(-\Delta)^\alpha$, both detailed in the section about the fractional case. Due to the assumption that the local in-homogeneity $a(x)$ is constant, we can skip the step of determining the covariance altogether.
\subsection*{Acknowledgements}

The authors would like to thank an anonymous referee for a very careful reading and useful suggestions. We also thank Avi Mayorcas and Ellen Powell for helpful comments on an earlier version of the paper.
 Both authors are supported by Eccellenza grant 194648 of the Swiss National Science Foundation and are members of NCCR Swissmap.
 
\section{The case of the GFF}

In this section we prove the martingale-type characterisation for the GFF, i.e. Theorem \ref{mainthm}. In Section \ref{sec:prelimGFF} we start with preliminaries on the stochastic heat equation and then deal with some simple properties of the field $h^D$, culminating in the determination of the covariance kernel needed in the case where $a(x)$ is not constant. The second half of this section follows closely \cite{AruPowell} and is kept short.

Thereafter in Section \ref{section_dyn_def} we define the approximate resampling dynamics and state the convergence theorem for this dynamics; we conclude Theorem \ref{mainthm} modulo this convergence. Next, in Section \ref{section_laplacien} we identify an approximation of the Laplacian related to our resampling property. In Section \ref{section_epsilon} we identify the relevant martingales and argue that they converge.

Finally in Section \ref{section_conv} we obtain the convergence of the original dynamics, concluding the whole proof in the case of the GFF.

\subsection{Preliminaries}
\label{sec:prelimGFF}
\subsubsection{Preliminaries on the stochastic heat equation}

We gather here the definition and some main properties of the additive stochastic heat equation, see e.g. \cite{Walsh} for more details. We formulate it in the mild sense.
\corr{
\begin{defi}[Mild formulation of SHE]
Let $d\geq 2$, $D$ a bounded regular domain of $\R^d$, and $(\Omega,\Ff,(\Ff_t)_{t\geq 0},\mathbb{P})$ a filtered probability space. The $a$-SHE on $D$ is the SPDE, formally satisfying
\begin{align}
\label{SHE}
\begin{cases}
& \partial_t u - \Delta u = a\,\dot{W} \\
&  u(0,x) = u_0(x), \qquad x\in D\\
& u(t,x) = 0, \qquad x\in \partial D,~ t\geq 0
\end{cases}
\end{align} 
where $\dot{W}$ is a space-time white noise, $a(x)$ is a measurable bounded positive function such that $a\dot{W}$ is a centered Gaussian process defined on Borel subsets of $\R^+\times \R^d$ whose covariance is $\text{Cov}(a\dot{W}(A),a\dot{W}(B))=\int_{A\cap B} a^2(x)dx$. Further, $u_0 \in \Ff_0$ is the initial condition.

A mild solution to \eqref{SHE} is a stochastic process $(u_t)_{t\geq 0}$ on $(\Omega,\Ff)$ 
taking values in the space of distributions $\Ss_D$ and such that almost surely for every $t\geq 0$ and $f \in C^\infty_c(D)$
\begin{align}
\label{SHEsol}
(u_t,f) := \int_D u_0(x)\left(\int_D H_D(t,x,y)f(y)dy\right)dx + \int_0^t\int_D \left(\int_D H_D(t-s,x,y)a(y)f(y)dy\right)W(dx,ds),
\end{align}
where $H_D$ is the fundamental solution of \eqref{SHE} (also called Green function of the heat kernel) on $D$ and can be expressed by 
\begin{align}
\label{SHEgreen}
H_D(t,x,y) = \sum_{n\in\N} e^{-\mu_n t}f_n(x)f_n(y),
\end{align}
where we recall $(f_n)_{n \geq 1}$ are an ON basis of $L^2(D)$ given by the smooth zero boundary eigenfunctions of the Laplacian, with related eigenvalues $-\Delta f_n = \mu_n f_n$; the existence of the eigenelements is guaranteed by the fact that the domain is regular, see e.g. \cite{MP_BM}.
\end{defi}
}
Let us make some remarks. Here in the first term of the sum we recognize the solution to the deterministic homogeneous PDE associated to \eqref{SHE}, i.e. the heat equation, and the second term is the solution to \eqref{SHE} with $u_0=0$.

\corr{Also, in this mild formulation the zero boundary conditions are fixed by the kernel $H_D$. It is equivalent to having the boundary conditions as in Definition \ref{defiZB} as it comes down to identifying the Green's function, like in Proposition \ref{prop:Green}.}

As the stochastic heat equation is a linear parabolic SPDE, the theory of the long-time behaviour of its solutions is well-understood and in particular we have the following proposition, see also e.g. \cite{LototskyShah}
\begin{prop}[Existence and uniqueness of stationary solutions]~\\
\label{propGFFSHE}
Suppose that $a(x)$ is measurable, bounded and positive. Then there is a unique stationary mild solution to the $a-$SHE, i.e. a unique mild solution $(u_t)_{t \geq 0}$ with $u_t$ having the same law for all $t \geq 0$ and satisfying zero boundary conditions.

Further, this law is Gaussian and its covariance kernel can be computed directly. When $a(x)$ is constant and equal to $\sqrt{2}$ it satisfies $$\E(u_t(x)u_s(y)) = \sum_{n \geq 1} \frac{1}{\mu_n}e^{-\mu_n|s-t|}f_n(x)f_n(y),$$
which in particular reproduces the covariance kernel of the GFF for $s = t$.
\end{prop}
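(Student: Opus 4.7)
The plan is to reduce the SPDE to scalar Ornstein--Uhlenbeck dynamics by expanding in the Dirichlet eigenbasis $(f_n)$, construct the stationary law explicitly, and then get uniqueness from the contractivity of the Dirichlet heat semigroup. First I would test the mild formulation \eqref{SHEsol} against $f_n$ and set $u_n(t) := (u_t, f_n)$. Using $-\Delta f_n = \mu_n f_n$ one checks that the components satisfy a scalar SDE of the form $du_n(t) = -\mu_n u_n(t)\,dt + dM_n(t)$, where $(M_n)$ is a centred Gaussian family of martingales with bracket $\langle M_n, M_m\rangle_t = t\int_D a^2 f_n f_m$. When $a\equiv 1$ orthonormality makes the $M_n$ independent standard Brownian motions and the modes are exactly independent OU processes with invariant variance $1/(2\mu_n)$.

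For existence, I would run every mode from time $-\infty$ in stationarity, namely set
\begin{equation*}
u_t(x) := \int_{-\infty}^t \int_D H_D(t-s, x, y)\, a(y)\, W(dy, ds),
\end{equation*}
and verify that it defines an $\Ss_D$-valued Gaussian process. The key bounds are $\int_0^\infty e^{-2\mu_n s}\,ds = 1/(2\mu_n)$ and Weyl-type summability $\sum_n \mu_n^{-p}<\infty$ for $p$ large enough; combined with boundedness of $a$ these give $\E[(u_t,f)^2]<\infty$ for any $f\in C_c^\infty(D)$, and the zero boundary condition of Definition \ref{defiZB} follows from the rapid decay of $H_D$ near $\partial D$. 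Stationarity and Gaussianity are then built in.

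For uniqueness, given two stationary solutions $u,v$ of the same equation driven by the same noise, the difference $w := u-v$ satisfies the deterministic homogeneous equation and hence $w_t = P_{t-s} w_s$, where $P_t$ denotes the Dirichlet heat semigroup. The spectral bound $\|P_{t-s}f\|_{L^2}\le e^{-\mu_1(t-s)}\|f\|_{L^2}$ yields $\E[(w_t,f)^2]\le e^{-2\mu_1(t-s)} \E[(w_s,f)^2]$; stationarity keeps the right-hand factor bounded in $s$, so sending $s\to -\infty$ forces $w_t\equiv 0$ in law, and the same argument on joint tuples gives equality as processes. For two stationary solutions driven by a priori different noises one compares each to the canonical object constructed above via this same contraction argument.

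Finally, the two-time covariance is a direct spectral computation. Plugging \eqref{SHEgreen} into the It\^o isometry and integrating in the time variable gives, for $a\equiv 1$ and $s\le t$, $\int_{-\infty}^s e^{-\mu_n(t-r)}e^{-\mu_n(s-r)}\,dr = \tfrac{1}{2\mu_n}e^{-\mu_n(t-s)}$, and orthonormality of $(f_n)$ produces the stated kernel. Setting $s=t$ collapses this to $\tfrac12 G_D(x,y)$ via the eigenexpansion of the Dirichlet Green's function, matching the GFF covariance up to the normalization convention of Definition \ref{GFFdef}. The main anticipated obstacle is not the algebra but justifying the spectral manipulations at the level of distributions, in particular the $s\to -\infty$ limit and the passage of sums through expectations; this is where uniform-in-$n$ control from Weyl asymptotics, boundedness of $a$, and the $L^2$-sense zero boundary hypothesis enter crucially.
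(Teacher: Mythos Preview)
Your proposal is correct and follows exactly the route the paper indicates but does not write out: the paper merely cites \cite{LototskyShah} and remarks that Proposition~\ref{propGFFSHE} can be deduced from the Ornstein--Uhlenbeck decomposition of Proposition~\ref{propSHEOU}, which is precisely your eigenbasis reduction and mode-by-mode stationary analysis. One cosmetic point: since solutions are only defined for $t\ge 0$, in the uniqueness step you should fix $s=0$ and send $t\to+\infty$ (using that $\E[(w_t,f)^2]$ is uniformly bounded via stationarity of $u$ and $v$ separately) rather than sending $s\to-\infty$.
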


One way to characterise SPDEs is via the related martingale problem. In the concrete case of the SHE one can even be more precise. See Appendix \ref{appendix_walsh} for the proof of the following classical result.

\begin{prop}
\label{propSHEOU}
The unique solution to \eqref{SHE} as defined by \eqref{SHEsol} can be described as an element of $\Ss_D$ using the infinite sum
$$u_t := \sum_{n=0}^\infty A_n(t) f_n,$$
where the random variables $A_n(t)$ are Ornstein-Uhlenbeck processes with parameter $\mu_n$, \textit{ie} solutions to the Itô SDEs
\begin{align}
\label{propSHEOUeq}
dX_t = -\mu_n X_td_t + dB^n_t,
\end{align}
and the $B_t^n$ are Brownian motions of covariation given by $\langle B_t^n,B_t^m\rangle = t\langle af_n~|~a f_m\rangle$ where in the last term $\langle \cdot~|~\cdot\rangle$ is the usual scalar product. Also, the initial conditions take the form $A_n(0)=\langle u_0~|~f_n\rangle $.
\end{prop}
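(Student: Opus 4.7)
The plan is to project the mild solution formula \eqref{SHEsol} onto the orthonormal basis $(f_n)_{n \geq 1}$ and check directly that the coordinate processes $A_n(t) := (u_t, f_n)$ are Ornstein--Uhlenbeck, while the series $\sum_n A_n(t) f_n$ reconstructs $u_t$ in $\Ss_D$.

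First I would fix an integer $n$, pair both sides of \eqref{SHEsol} with $f_n$, and use the eigenfunction expansion \eqref{SHEgreen} together with Fubini (valid in $L^2$) and orthonormality of the $(f_k)$. The deterministic part gives $e^{-\mu_n t}\langle u_0\,|\,f_n\rangle$, and the stochastic part reduces to
\[
A_n(t) = e^{-\mu_n t}\langle u_0\,|\,f_n\rangle + \int_0^t e^{-\mu_n(t-s)} \, dB^n_s,
\]
where $B^n_t := \int_0^t\!\int_D a(x) f_n(x)\, W(dx,ds)$ is a Walsh stochastic integral. By the isometry for the white noise $W$, each $B^n$ is a centred continuous Gaussian martingale, hence a time-changed Brownian motion, and its quadratic covariation with $B^m$ is
\[
\langle B^n, B^m\rangle_t = t\int_D a^2(x) f_n(x) f_m(x)\, dx = t\,\langle a f_n\,|\,a f_m\rangle,
\]
matching the statement. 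Applying Itô's formula to the explicit expression for $A_n(t)$ above yields $dA_n(t) = -\mu_n A_n(t)\, dt + dB^n_t$, i.e.\ \eqref{propSHEOUeq}, with initial condition $A_n(0) = \langle u_0\,|\, f_n\rangle$.

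It then remains to verify that $\sum_{n} A_n(t) f_n$ equals $u_t$ as an element of $\Ss_D$. For any test function $f \in C^\infty_c(D)$ one has $(u_t, f) = \sum_n A_n(t)\,(f_n,f)$; this is proved by expanding $f = \sum_n (f_n,f) f_n$ in $L^2(D)$, inserting this expansion into $(u_t,f)$ computed via \eqref{SHEsol}, and exchanging summation with the stochastic and spatial integrals. The necessary convergence follows from the rapid decay of the Fourier coefficients $(f_n,f)$ (since $f$ is smooth and compactly supported, so $\sum_n \mu_n^k (f_n,f)^2 < \infty$ for every $k$), combined with the polynomial growth bound $\E[A_n(t)^2] \lesssim \mu_n^{-1}$ coming from the OU representation.

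The main obstacle I expect is precisely this last exchange of sum and stochastic integral, which requires $L^2$-convergence of the partial sums of the Fourier series of the test function together with the Walsh isometry; this is where the regularity of $D$ (controlling the spectral behaviour of $-\Delta$) and the boundedness of $a$ genuinely enter. Once this is justified, uniqueness of the decomposition is immediate from the fact that the $(f_n)$ form a basis, and so the representation of $u_t$ announced in the proposition holds. A fully detailed verification is deferred to Appendix \ref{appendix_walsh}.
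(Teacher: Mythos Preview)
Your derivation of the Ornstein--Uhlenbeck representation is essentially identical to the paper's: both project onto $f_n$, define $B^n_t = \int_0^t\!\int_D a(y)f_n(y)\,W(dy,ds)$, compute the covariation via the Walsh isometry, and apply It\^o's formula to recover \eqref{propSHEOUeq}. You are in fact slightly more careful than the paper in keeping the initial-condition term $e^{-\mu_n t}\langle u_0\,|\,f_n\rangle$, and in justifying the series reconstruction in $\Ss_D$ via the decay of $(f_n,f)$; the paper simply cites Walsh for this step.

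The one substantive point you omit is the converse direction that the paper does prove: that the description ``OU processes driven by Brownian motions with $\langle B^n,B^m\rangle_t = t\langle af_n\,|\,af_m\rangle$'' determines the law of $(u_t)$ uniquely. The paper argues this by taking finite linear combinations $m_t=\sum_i c_i B^{n_i}_t$, computing $\langle m\rangle_t = t\,c^\dagger A c$, and using the exponential martingale to show the increments are jointly Gaussian with the prescribed covariance, hence the joint law of $(B^n)_n$ is fixed. This is not needed to establish the representation in the proposition as literally stated, but it is precisely what makes Corollary~\ref{cor:char} work, so if you are writing this up you should include it.
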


In fact, using the unique stationary law of Ornstein-Uhlenbeck processes, one can also deduce Proposition \ref{propGFFSHE} from Proposition \ref{propSHEOU}. We will use the following corollary: 

\begin{coro}
\label{cor:char}
Let $(h_t)_{t \geq 1}$ be a distribution valued process. 
Suppose for all $f \in C_c^\infty(D)$ we have that 
$$(M_t,f) := (h_t,f) - \int_0^t (\Delta h_s,f) ds$$
is a continuous centered Gaussian process with variance $\E[(M_t,f)^2]=t\langle af~|~a f\rangle$ and where $(\Delta h,f) = (h,\Delta f)$ for any distribution $h$. Then $h_t$ is the unique solution of the $a-$SHE starting from the zero boundary conditions. 
\end{coro}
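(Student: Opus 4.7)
The plan is to test against the $L^2$-orthonormal basis $(f_n)_{n\geq 1}$ of zero-boundary eigenfunctions of $-\Delta$ with eigenvalues $\mu_n$, and identify the projections of $h_t$ as the Ornstein--Uhlenbeck processes of Proposition \ref{propSHEOU}. Setting $A_n(t):=(h_t,f_n)$ and $B^n_t:=(M_t,f_n)$, self-adjointness of $\Delta$ against the smooth, zero-boundary $f_n$ gives $(\Delta h_s,f_n)=(h_s,\Delta f_n)=-\mu_n A_n(s)$, so the defining identity of $M_t$ rewrites as
\begin{align*}
A_n(t) = A_n(0) -\mu_n\int_0^t A_n(s)\,ds + B^n_t,
\end{align*}
which is exactly the integrated form of the Langevin equation \eqref{propSHEOUeq} driven by $B^n_t$, with the correct initial datum $A_n(0)=(h_0,f_n)=\langle h_0~|~f_n\rangle$.

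Next I would identify the family $(B^n_t)_n$ as the Brownian motions of Proposition \ref{propSHEOU}. Each $B^n_t$ is by hypothesis a continuous centred Gaussian process of variance $t\,\langle af_n~|~af_n\rangle$, and the very construction $(M_t,f) = (h_t,f) - \int_0^t(\Delta h_s,f)ds$ is set up so that $(M_t,f)$ is the noise part of a martingale problem; combined with L\'evy's characterisation this upgrades $B^n_t$ to a genuine Brownian motion with the prescribed variance. Polarising the second-moment identity by expanding $\E[(M_t, f_n+\lambda f_m)^2]$ and varying $\lambda\in\R$ then yields the joint covariation $\E[B^n_t B^m_t] = t\,\langle af_n~|~af_m\rangle$, matching exactly the covariation $\langle B^n,B^m\rangle_t$ required in Proposition \ref{propSHEOU}.

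It then remains to lift the identification back to the distributional level. Given $f\in C^\infty_c(D)$ with expansion $f=\sum_n \langle f~|~f_n\rangle f_n$, the OU estimate $\E[A_n(t)^2]=O(\langle af_n~|~af_n\rangle/\mu_n + e^{-\mu_n t}A_n(0)^2)$ together with Weyl decay of the Fourier coefficients $\langle f~|~f_n\rangle$ justifies exchanging pairing and summation, so $(h_t,f)=\sum_n \langle f~|~f_n\rangle A_n(t)$. By Proposition \ref{propSHEOU} this coincides almost surely with the mild SHE solution $(u_t,f)$ given by \eqref{SHEsol}, and as the identity holds almost surely for every $f\in C^\infty_c(D)$ with both sides continuous in $f$, we conclude $h_t = u_t$ in $\Ss_D$; the spatial zero boundary conditions are inherited automatically from working in the zero-boundary eigenbasis.

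The main obstacle is the soft step of upgrading the bare hypothesis ``$(M_t,f)$ is continuous centred Gaussian with variance $t\,\langle af~|~af\rangle$'' to the full joint structure $\E[B^n_s B^m_t] = (s\wedge t)\,\langle af_n~|~af_m\rangle$ across test functions and times; this is implicit in reading $M_t$ as the driving noise of an SHE-type martingale problem, but needs to be made precise to invoke L\'evy's characterisation and the Walsh stochastic-integral construction. Once this joint Brownian family is in place, the remainder of the argument is essentially a bookkeeping of Proposition \ref{propSHEOU}.
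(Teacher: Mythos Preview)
The paper does not give an explicit proof of Corollary~\ref{cor:char}; it is stated immediately after Proposition~\ref{propSHEOU} as a direct consequence, and your approach---projecting onto the eigenbasis $(f_n)$, recognising the integrated OU equation for $A_n(t)=(h_t,f_n)$, and invoking the uniqueness part of Proposition~\ref{propSHEOU}---is exactly the intended deduction.

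You are right to flag the gap concerning the joint time--space structure of $(M_t,f)$. As literally stated, the hypothesis ``continuous centred Gaussian with variance $t\langle af\,|\,af\rangle$'' is too weak: for instance $(M_t,f)=\sqrt{t}\,Z_f$ with $Z_f$ a fixed Gaussian satisfies it but is not a Brownian motion, and then $A_n(t)$ need not solve the OU SDE. The corollary should really be read with $(M_t,f)$ a continuous \emph{martingale} (equivalently, with independent Gaussian increments), which is what the paper actually obtains at the point of application: the martingale CLT (Theorem~\ref{thm:mCLT}) used in Proposition~\ref{prop:mCLT} delivers a limit with independent Gaussian increments and such that $X_t$ and $X_t^2-ct$ are martingales, i.e.\ a genuine Brownian motion. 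With that reading your polarisation argument gives the cross-covariances $\langle B^n,B^m\rangle_t=t\langle af_n\,|\,af_m\rangle$, and the rest of your proof goes through. So the issue is a slight imprecision in the corollary's statement rather than in your argument.
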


\subsubsection{Some simple properties of the field $h^D$}

In this subsection, we state immediate consequences coming from our assumptions on the field $h^D$ in Theorem \ref{mainthm}, culminating with a definition and an estimate on the 2-point function. This subsection follows closely \cite{AruPowell} Section 2.

\begin{lemm}\label{lemorth}
The following orthogonality properties are immediate from the martingale-type decomposition: 
$$\E[\tilde{h}^B ~|~\varphi_D^B]=0 \qquad \text{and} \qquad \E[\tilde{h}^B\,\varphi_D^B]=0\qquad \text{and} \qquad \E[\tilde{h}^B]=0.$$
\end{lemm}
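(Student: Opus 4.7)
The plan is to observe that all three claims reduce to a single fact: the random distribution $\varphi_D^B$ is measurable with respect to the $\sigma$-algebra $\sigma(\restriction{(h^D)}{D\setminus B})$. Granted this, each orthogonality statement follows by conditioning and invoking the martingale property (1) of the MTD.

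I would first establish the measurability claim as follows. Since $\tilde{h}^B\in\Ss_B$ is compactly supported in $B$, the decomposition $h^D=\tilde{h}^B+\varphi_D^B$ forces $\restriction{(\varphi_D^B)}{D\setminus B}=\restriction{(h^D)}{D\setminus B}$, so outside $B$ measurability is automatic. Inside $B$, the MTD tells us $\varphi_D^B$ corresponds to integration against a harmonic function $\phi$, and a harmonic function on $B$ is recovered from its boundary behaviour via the Poisson integral — which is a deterministic linear operation applied to the part of $h^D$ outside $B$. Hence $(\varphi_D^B,g)$ is $\sigma(\restriction{(h^D)}{D\setminus B})$-measurable for every test function $g$.

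With this in hand, the third claim $\E[\tilde{h}^B]=0$ is just the tower property applied to condition (1):
$$\E[(\tilde{h}^B,f)]=\E\bigl[\E[(\tilde{h}^B,f)\mid \sigma(\restriction{(h^D)}{D\setminus B})]\bigr]=0.$$
For the first claim, because $\sigma(\varphi_D^B)\subseteq \sigma(\restriction{(h^D)}{D\setminus B})$, iterated conditioning gives
$$\E[(\tilde{h}^B,f)\mid \varphi_D^B]=\E\bigl[\E[(\tilde{h}^B,f)\mid \sigma(\restriction{(h^D)}{D\setminus B})]\,\bigm|\,\varphi_D^B\bigr]=0.$$
For the second claim, pull out the $\sigma(\restriction{(h^D)}{D\setminus B})$-measurable factor $(\varphi_D^B,g)$:
$$\E\bigl[(\tilde{h}^B,f)(\varphi_D^B,g)\bigr]=\E\bigl[(\varphi_D^B,g)\,\E[(\tilde{h}^B,f)\mid \sigma(\restriction{(h^D)}{D\setminus B})]\bigr]=0,$$
after a standard integrability check justifying the pull-out (guaranteed by the uniform fourth moment bound in condition B and Cauchy--Schwarz).

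The only delicate step is the measurability of $\varphi_D^B$ on $B$ with respect to the outside data. One must verify that the harmonic function $\phi$ representing $\varphi_D^B$ on $B$ really is a measurable functional of $\restriction{(h^D)}{D\setminus B}$ — that is, that the Poisson-type reconstruction is unambiguous at the level of distributions. For the GFF case this can be done by noting that the harmonic extension is a bounded linear map on sufficiently nice spaces and $h^D$ has finite moments by condition B, so mollifying $h^D$ outside $B$ and extending gives a consistent limit; alternatively one can argue abstractly that $\phi$ is uniquely determined by the equation $h^D-\tilde{h}^B=\varphi_D^B$ together with harmonicity, making it a measurable function of the pair $(h^D,\tilde{h}^B)$ and then, via the martingale property, of the outside data alone. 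Everything else is routine bookkeeping.
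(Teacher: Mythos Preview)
Your reduction of all three claims to the measurability of $\varphi_D^B$ with respect to $\sigma(\restriction{(h^D)}{D\setminus B})$, followed by the tower property, is correct and is exactly what the paper has in mind: the paper gives no proof (declaring the properties ``immediate''), but it uses precisely this measurability elsewhere (see the proof of Lemma~\ref{remarkpropmarkov1}(iii)) and later establishes the Poisson-extension identity you invoke in Lemma~\ref{lemma_Gamma_approx_laplacian}. A slightly cleaner route than full Poisson reconstruction is to use the zero boundary condition (item~(3) of the MTD): for $z\in B$, harmonicity furnishes test functions $f_n$ with support shrinking to $\partial B$ and $(\varphi_D^B,f_n)=\varphi_D^B(z)$; then $(h^D,f_n)-\varphi_D^B(z)=(\tilde{h}^B,f_n)\to 0$ in $L^2$ by Definition~\ref{defiZB}, so $\varphi_D^B(z)$ is an a.s.\ subsequential limit of random variables measurable with respect to any $\sigma(\restriction{(h^D)}{B_0})$ once $\mathrm{supp}\,f_n\subset B_0$.

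One genuine flaw to flag: your closing ``alternative'' abstract argument does not work. Writing $\varphi_D^B=h^D-\tilde{h}^B$ makes $\varphi_D^B$ a measurable function of the pair $(h^D,\tilde{h}^B)$, but the martingale property~(1) is a statement about conditional \emph{expectation} --- it says nothing about $\tilde{h}^B$ (or $\varphi_D^B$) being \emph{measurable} with respect to the outside data. You cannot bootstrap measurability from a vanishing conditional mean, so this shortcut should be dropped; your primary argument is the one that carries the lemma.
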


\begin{lemm}
\label{remarkpropmarkov1}
The decomposition $h^D = \tilde{h}^B + \varphi_D^B$ given in the MTD is unique. Moreover, one can observe the following properties
\begin{itemize}
\item[i)] if $B$ and $B'$ are balls such that $B'\subset B \subset D$, then we have the Chasles relation 
$$\varphi_B^{B'} = \varphi_D^{B'}-\varphi_D^B = \tilde{h}^B - \tilde{h}^{B'},$$
and $\varphi_B^{B'}$ is a random distribution of compact support $\bar{B}$ corresponding to integrating with respect to a harmonic function when restricted to $B'$.
\item[ii)] if $B_1,\dots,B_n$ are $n$ disjoint balls, then for any $j\in\{1,\dots,n\}$,
$$\varphi := h^D - \sum_{i=1}^n\tilde{h}^{B_i} = \varphi_D^{B_j} - \sum_{i\neq j} \tilde{h}^{B_i},$$
is harmonic in $\cup_{i=1}^n B_i$ and equal to $\varphi_D^{B_j}$ on $B_j$. Moreover $\{\varphi,(\tilde{h}^{B_i})_{i\neq j}\}$ is measurable with respect to the $\sigma$-algebra generated by $\restriction{(h^D)}{D-B_j}$.
\item[iii)] we have that $\E[\varphi_D^{B'}(x)\varphi_D^B(x)]=\E[\varphi_D^B(x)^2]$ for balls $B' \subseteq B$
\end{itemize}
\end{lemm}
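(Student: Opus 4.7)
The plan is to derive all three statements from (a) uniqueness of the MTD decomposition and (b) the observation that $\varphi_D^B|_{D\setminus B} = h^D|_{D\setminus B}$ (since $\tilde{h}^B$ is supported in $\bar B$), together with distributional harmonic extension, which makes $\varphi_D^B$ measurable with respect to $h^D|_{D\setminus B}$. Combined with the orthogonality in Lemma \ref{lemorth} and the martingale property of the MTD, this yields (i)--(iii).

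I would start with uniqueness. If $h^D = \tilde{h}_1^B + \varphi_{D,1}^B = \tilde{h}_2^B + \varphi_{D,2}^B$, their difference $\Delta := \tilde{h}_1^B - \tilde{h}_2^B = \varphi_{D,2}^B - \varphi_{D,1}^B$ is supported in $\bar B$ with zero boundary values in the sense of Definition \ref{defiZB} (from the LHS), and harmonic on $B$ (from the RHS, as a difference of harmonic distributions). Uniqueness of the distributional Dirichlet problem with zero boundary values then forces $\Delta\equiv 0$. Given uniqueness, part (i) is immediate: applying the MTD in $B$ and in $B'\subset B$ and equating, $\tilde{h}^B - \tilde{h}^{B'} = \varphi_D^{B'} - \varphi_D^B =: \varphi_B^{B'}$; the LHS is supported in $\bar B$, and the RHS restricted to $B'$ is harmonic on $B'$ as the difference of a function harmonic on $B'$ and one harmonic on the larger ball $B$.

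For (ii), fix $j$ and apply the MTD in $B_j$ to obtain
$$\varphi = h^D - \sum_i \tilde{h}^{B_i} = \varphi_D^{B_j} - \sum_{i\neq j}\tilde{h}^{B_i}.$$
On $B_j$, the terms $\tilde{h}^{B_i}$ with $i\neq j$ vanish since $\bar B_i$ is disjoint from $B_j$, leaving $\varphi|_{B_j}=\varphi_D^{B_j}|_{B_j}$, which is harmonic; varying $j$ yields harmonicity on $\cup_i B_i$. Measurability with respect to the $\sigma$-algebra generated by $h^D|_{D\setminus B_j}$ follows from (a)--(b) above: $\varphi_D^{B_j}$ is the harmonic extension into $B_j$ of $h^D|_{D\setminus B_j}$, and each $\tilde{h}^{B_i}$ for $i\neq j$ is supported in $\bar B_i\subset D\setminus B_j$ and depends on $h^D$ only through $h^D|_{D\setminus B_j}$ (any change in $h^D$ on $B_j$ is absorbed entirely into $\varphi_D^{B_i}|_{B_j}$, leaving $\tilde{h}^{B_i}$ invariant).

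Finally for (iii), write $\varphi_D^{B'}=\varphi_D^B+\varphi_B^{B'}=\varphi_D^B+\tilde{h}^B-\tilde{h}^{B'}$ using (i), so that
$$\E\bigl[\varphi_D^{B'}(x)\varphi_D^B(x)\bigr] = \E\bigl[\varphi_D^B(x)^2\bigr] + \E\bigl[\tilde{h}^B(x)\,\varphi_D^B(x)\bigr] - \E\bigl[\tilde{h}^{B'}(x)\,\varphi_D^B(x)\bigr].$$
Lemma \ref{lemorth} kills the $\tilde{h}^B\varphi_D^B$ term. For the $\tilde{h}^{B'}\varphi_D^B$ term, measurability of $\varphi_D^B$ with respect to $\sigma(h^D|_{D\setminus B})\subset\sigma(h^D|_{D\setminus B'})$ combined with the martingale property in $B'$ and the tower rule gives zero. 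The main delicate step throughout is the distributional harmonic-extension argument underlying uniqueness and the measurability claims: checking that a harmonic distribution on a ball is determined by its distributional values outside. This is standard potential theory but needs care at the distributional level, and it is the only place where anything beyond elementary algebra and the MTD axioms enters.
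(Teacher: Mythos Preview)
Your argument is correct and follows essentially the same route as the paper, which for uniqueness and parts (i)--(ii) simply refers to \cite{BPR18,AruPowell}, and for (iii) invokes the measurability of $\varphi_D^B$ with respect to $\sigma(h^D|_{D\setminus B'})$. You have unpacked these references in the expected way: uniqueness via ``harmonic with zero boundary implies zero'', (i) by subtracting the two MTD decompositions, (ii) by restricting to each $B_j$, and (iii) by the tower property combined with the measurability of $\varphi_D^B$. Your identification of the distributional harmonic--extension/measurability step as the only genuinely nontrivial input is exactly right; the paper itself makes this precise only later, in Lemma~\ref{lemma_Gamma_approx_laplacian}, where $\varphi_D^{B(x,\eps)}$ is shown to be the $L^2$ limit of Poisson extensions of mollifications of $h^D$, which is what actually justifies the measurability claims used here.
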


\begin{proof}
The uniqueness and first two properties follow exactly like in \cite{BPR18} in the 2D case, or as sketched in \cite{AruPowell} Section 2. The third property is immediate from the fact that $\varphi_D^B$ is measurable with respect to the $\sigma$-algebra generated by $\restriction{(h^D)}{D-B'}$\footnote{this means that $\varphi_D^B$ on $B$ is measurable with respect to the intersection of all the $\sigma$-algebras generated by the $\restriction{(h^D)}{B_0}$, for all the open sets $B_0$ containing $D-B'$. Moreover, being measurable with respect to the $\sigma$-algebra generated by the $\restriction{(h^D)}{B_0}$ means being measurable with respect to the $\sigma$-algebra generated by all the $h^D(f)$ where $\text{supp} f \in B_0$. See e.g. \cite{Aruthesis}}.
\end{proof}

\begin{lemm}
\label{remarkpropmarkov2}
If $B(x,\eps')\subset B(x,\eps)\subset D$, then
$$ \E\left[\varphi_D^{B(x,\eps)}(x)^2\right]\leq \E\left[\varphi_D^{B(x,\eps')}(x)^2\right].$$
Moreover, for every $\delta > 0$ there exists a constant $C(\delta)>0$ such that for any $x\in D$ with $\text{d}(x,\partial D)>\delta$ and any $\eps<\delta$ we have
$$\E\left[ \varphi_D^{B(x,\eps)}(x)^2 \right] \leq C(\delta) (1+s_d(\eps)),$$
where $s_d(r)$ is the function defined on $\R_{>0}$ by $\log(1/r)$  if $d=2$ and $|x|^{2-d}$ if $d\geq 3$.
\end{lemm}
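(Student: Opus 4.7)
The plan for the monotonicity is short. Write $B = B(x,\eps)$ and $B' = B(x,\eps') \subset B$. By the Chasles identity of Lemma \ref{remarkpropmarkov1}(i), $\varphi_D^{B'} = \varphi_D^B + \varphi_B^{B'}$, and all three distributions are represented by harmonic functions on $B'$, so pointwise evaluation at $x$ is unambiguous. Substituting this into the pointwise identity of Lemma \ref{remarkpropmarkov1}(iii), namely $\E[\varphi_D^{B'}(x)\varphi_D^B(x)] = \E[\varphi_D^B(x)^2]$, immediately yields the orthogonality $\E[\varphi_D^B(x)\varphi_B^{B'}(x)] = 0$. A Pythagorean expansion then gives
$$\E[\varphi_D^{B'}(x)^2] = \E[\varphi_D^B(x)^2] + \E[\varphi_B^{B'}(x)^2] \geq \E[\varphi_D^B(x)^2],$$
which is the first claim.

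For the quantitative bound the idea is to iterate this orthogonal decomposition along a dyadic sequence of concentric balls. Fix $x$ with $d(x,\partial D) > \delta$, set $\delta_0 := \delta/2$, $B_k := B(x, \delta_0 2^{-k})$, and let $N$ be the smallest integer with $\delta_0 2^{-N} \leq \eps$. Telescoping the Pythagorean identity above (and using the monotonicity just proved to pass from $B_N$ to $B(x,\eps)$) gives
$$\E[\varphi_D^{B(x,\eps)}(x)^2] \leq \E[\varphi_D^{B_0}(x)^2] + \sum_{k=0}^{N-1} \E[\varphi_{B_k}^{B_{k+1}}(x)^2].$$
The initial term is handled by writing $\varphi_D^{B_0} = h^D - \tilde h^{B_0}$, evaluating at $x$ by averaging against a smooth radial mollifier of scale $\delta_0/2$ supported in $B_0$, and using condition B (via Cauchy--Schwarz to pass from the 4th to the 2nd moment of $(h^D,\cdot)$) together with MTD condition (2). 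This yields a constant depending only on $\delta$.

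Each dyadic increment is controlled using $\varphi_{B_k}^{B_{k+1}} = \tilde h^{B_k} - \tilde h^{B_{k+1}}$ from Lemma \ref{remarkpropmarkov1}(i). Evaluating at $x$ through a radial mollifier $\rho^{r_k}$ of scale $r_k$ of order $\delta_0 2^{-k}$ supported in $B_{k+1}$, and applying the MTD second-moment bound to both pieces (with $\sup|\rho^{r_k}|$ of order $r_k^{-d}$), one obtains
$$\E[\varphi_{B_k}^{B_{k+1}}(x)^2] \leq C\, (\delta_0 2^{-k})^{2-d}.$$
Summing: for $d=2$ this gives $O(N) = O(|\log\eps|)$, and for $d\geq 3$ the geometric series in $k$ sums to $O(\eps^{2-d})$. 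In both cases one recovers exactly the scaling $s_d(\eps)$ claimed in the statement.

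The main obstacle is procedural rather than conceptual: one must carefully define the pointwise evaluations $\varphi_D^B(x)$ and $\varphi_{B_k}^{B_{k+1}}(x)$ via harmonic mean-value averages with radial mollifiers whose supports fit strictly inside the balls on which the underlying functions are harmonic, and track that all constants can be chosen uniformly in $x$ once $d(x,\partial D) > \delta$. Everything else is a straightforward summation.
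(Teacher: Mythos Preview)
Your proof is correct and follows essentially the same route as the paper: the monotonicity comes from the Pythagorean identity $\E[\varphi_D^{B'}(x)^2]=\E[\varphi_D^{B}(x)^2]+\E[\varphi_B^{B'}(x)^2]$ (the paper writes the increment as $(\tilde h^{B},\eta_x^{\eps'})-(\tilde h^{B'},\eta_x^{\eps'})$ but the orthogonality is the same Lemma~\ref{remarkpropmarkov1}(iii) that you invoke), and the quantitative bound is obtained by the same dyadic telescoping, bounding the initial term via assumption~B and each increment via the MTD second-moment scaling, then summing to $O(|\log\eps|)$ in $d=2$ and $O(\eps^{2-d})$ in $d\ge 3$.
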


Here the proof is a bit different to the one in \cite{AruPowell}, and we need to use the uniform bound on the second moment to conclude. 

\begin{proof}

Let $\eta$ be a rotationally symmetric mollifier with total mass one and support in the unit ball $\B \subseteq \R^d$ and define $\eta_x^\eps$ by $\eps^{-d}\eta(\tfrac{\cdot-x}{\eps})$. From Lemma \ref{remarkpropmarkov1} and the mean-value property of harmonic functions, we can write 
\item[~]
$$ \E\left[\varphi_D^{B(x,\eps')}(x)^2\right] = \E\left[\varphi_D^{B(x,\eps)}(x)^2\right] + \E\left[\left((\tilde{h}^{B(x,\eps)},\eta_x^{\eps'})-(\tilde{h}^{B(x,\eps')},\eta_x^{\eps'})\right)^2\right]$$
and the first point follows.\\
Now, define $2^{-n} \leq \eps < 2^{-(n-1)}$ and $n_0:=\inf\{m\leq n ~:~2^{-m}<\delta\}$. Then $\E\left[\varphi_D^{B(x,\eps)}(x)^2\right] \leq \E\left[\varphi_D^{B(x,2^{-n})}(x)^2\right] $.
Using the above identity several times we obtain
$$ \E\left[\varphi_D^{B(x,2^{-n})}(x)^2\right] = \E\left[\varphi_D^{B(x,2^{-n_0})}(x)^2\right] + \sum_{m=n_0+1}^{n-1}\E\left[\left( (\tilde{h}^{B(x,2^{-(m-1)})},\eta_x^{2^{-m}}) - (\tilde{h}^{B(x,2^{-m})},\eta_x^{2^{-m}}) \right)^2\right].$$
We can bound this by
$$\E\left[ (h^D,\eta_x^{2^{-n_0}})^2 \right] + \sum_{m=n_0+1}^{n-1}\E\left[ (\tilde{h}^{B(x,2^{-(m-1)})},\eta_x^{2^{-m}})^2\right].$$
The uniform bound on the 2nd moments (that follows from the uniform bound on the 4th moments, i.e. assumption B) bounds the first term by a constant that depends on $\delta$. \corr{Further, by scaling there is some universal constant $C$ $\sup_{z \in D}|\eta_x^{2^{-m}}(z)| \leq C2^{md}$. Thus the local bound on the second moments (that follows from the local bound on the 4th moments, i.e. assumption A) bounds each of the terms of the sum by $C^22^{4md} 2^{-(m-1)2(d+2)}=2^{2(d+2)}2^{2m(d-2)}$. Summing together}, we obtain the relevant bounds. \end{proof}

\noindent From similar arguments with some more care, one can also prove the analogous fourth moment estimate, see Appendix \ref{appendix_lemma4}. Here again we make use of the uniform bound on the 4th moments.

\begin{lemm}
\label{remarkpropmarkov3}
If $B(x,\eps')\subset B(x,\eps)\subset D$, then
$$ \E\left[\varphi_D^{B(x,\eps)}(x)^4\right]\leq \E\left[\varphi_D^{B(x,\eps')}(x)^4\right].$$
Moreover, for any $\delta>0$ there exists a constant $C(\delta)>0$ such that for any $x\in D$ with $\text{d}(x,\partial D)>\delta$ and any $\eps<\delta$ we have
$$\E\left[ \varphi_D^{B(x,\eps)}(x)^4 \right] \leq C(\delta) (1+s_d(\eps)^2)$$
and moreover
\begin{equation}\label{eq:4pt}
\E\left[  (h^D,\nu_{x_1}^\eps)(h^D,\nu_{x_2}^\eps)(h^D,\nu_{x_3}^\eps)(h^D,\nu_{x_4}^\eps) \right]^4 \leq C(\delta)\prod_{i=1}^4(1+\max_{i\neq j} s_d(|x_i-x_j|)^2).
\end{equation}
\end{lemm}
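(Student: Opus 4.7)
The plan is to mirror the proof of Lemma \ref{remarkpropmarkov2} closely, now with fourth-moment bookkeeping in place of second moments, exploiting a martingale structure embedded in the MTD. The key observation is that for each fixed $x$ the sequence $A_m := \varphi_D^{B(x,2^{-m})}(x)$ is a martingale with respect to the increasing filtration $\Gg_m := \sigma(\restriction{(h^D)}{D\setminus B(x,2^{-m})})$. Indeed, for $\eps' < \eps$ and any mollifier $\eta_x^{\eps''}$ with $\eps'' < \eps'$, the mean-value property applied to the harmonic parts $\varphi_D^{B(x,\eps)}$ and $\varphi_D^{B(x,\eps')}$ yields
\[
\varphi_D^{B(x,\eps')}(x) = \varphi_D^{B(x,\eps)}(x) + (\tilde{h}^{B(x,\eps)}, \eta_x^{\eps''}) - (\tilde{h}^{B(x,\eps')}, \eta_x^{\eps''}),
\]
and conditioning on $\Gg_\eps$ kills both $\tilde h$-terms: the first directly by MTD condition~(1), the second via the tower property since $\Gg_\eps \subset \Gg_{\eps'}$. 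The monotonicity part of the lemma then follows at once from conditional Jensen.

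For the upper bound, fix $n$ with $2^{-n} \leq \eps < 2^{-(n-1)}$ and $n_0 := \inf\{m : 2^{-m} < \delta\}$, and telescope $A_n - A_{n_0} = \sum_{m=n_0+1}^n \Delta_m$ with $\Delta_m := A_m - A_{m-1}$. By the Burkholder-Davis-Gundy inequality (or, equivalently, a direct expansion using orthogonality of martingale increments together with Cauchy-Schwarz on cross terms),
\[
\E[(A_n - A_{n_0})^4] \leq C\Big(\sum_{m=n_0+1}^n \E[\Delta_m^4]^{1/2}\Big)^2.
\]
Each increment's fourth moment is controlled by Assumption A: using $\sup|\eta_x^{2^{-m}}| \leq C 2^{md}$ and $\eps = 2^{-(m-1)}$ in the MTD fourth-moment bound produces $\E[\Delta_m^4] \leq C 2^{2m(d-2)}$. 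Summing $2^{m(d-2)}$ from $n_0+1$ to $n$ and squaring reproduces exactly $s_d(\eps)^2$, i.e.\ $\log(1/\eps)^2$ for $d=2$ and $\eps^{-2(d-2)}$ for $d \geq 3$. The base term $\E[A_{n_0}^4]$ is bounded by a constant $C(\delta)$ via Assumption B on $h^D$ combined with the MTD fourth moment applied to $\tilde{h}^{B(x,2^{-n_0})}$ against a fixed mollifier of width $< 2^{-n_0}$.

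For the four-point estimate~\eqref{eq:4pt}, the plan is to decompose each factor via MTD at the scale $r_i := \min_{j\neq i}|x_i - x_j|$, writing $(h^D, \nu_{x_i}^\eps) = \varphi_D^{B_i}(x_i) + (\tilde{h}^{B_i}, \nu_{x_i}^\eps)$ with $B_i := B(x_i, r_i/2)$, so that the balls $B_1,\dots,B_4$ are pairwise disjoint. Expanding the four-fold product and conditioning iteratively with respect to the filtrations $\sigma(\restriction{(h^D)}{D\setminus B_j})$, the conditional centering of $\tilde{h}^{B_j}$ from MTD~(1) kills all terms carrying an odd power of some $\tilde{h}^{B_j}$; the surviving even-power contributions are handled by Cauchy-Schwarz plus Assumption~B, and the dominant piece $\E[\prod_i \varphi_D^{B_i}(x_i)^{\text{(powers)}}]$ is then controlled by the single-point upper bound just established, producing the $\prod_i(1+s_d(r_i)^2)$ factor.

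The main obstacle I expect is precisely this four-point estimate: a naive Hölder reduction is too crude, since the single-point quantity $\E[(h^D, \nu_{x_i}^\eps)^4]$ itself diverges in $\eps$, so one really needs the cancellations coming from the disjointness of the balls $B_i$ and the conditional centering of each $\tilde{h}^{B_i}$. The bookkeeping to check that every surviving term in the fourfold expansion matches the claimed pairwise-distance scaling is the bulk of the work; the monotonicity and the single-point upper bound are straightforward fourth-moment analogues of Lemma \ref{remarkpropmarkov2}.
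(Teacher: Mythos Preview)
Your approach for the monotonicity and the single-point upper bound is essentially the same as the paper's: the paper expands $(A+B)^4$ directly and uses the orthogonality $\E[A^k B]=0$ coming from MTD(1), while you phrase the same computation as conditional Jensen and Burkholder--Davis--Gundy on the martingale $A_m=\varphi_D^{B(x,2^{-m})}(x)$. Both routes yield the same telescoping estimate $\E[\Delta_m^4]\le C\,2^{2m(d-2)}$ and the same summation.

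For the four-point estimate your plan is correct and in fact more explicit than the paper, which dispatches it with the single sentence ``Using H\"older's inequality, one can deduce the last identity.'' That line hides exactly the decomposition you describe: write $(h^D,\nu_{x_i}^\eps)=\varphi_D^{B_i}(x_i)+(\tilde h^{B_i},\nu_{x_i}^\eps)$ with $B_i=B(x_i,r_i/2)$ disjoint, expand, and use Lemma~\ref{remarkpropmarkov1}(ii) to see that all factors except $(\tilde h^{B_j},\nu_{x_j}^\eps)$ are $\sigma(\restriction{h^D}{D\setminus B_j})$-measurable, so MTD(1) kills every term containing at least one $\tilde h$-factor. You can simplify your plan here: since each index $i$ appears \emph{once}, there are no ``surviving even-power contributions'' at all --- the only surviving term is $\E\big[\prod_i\varphi_D^{B_i}(x_i)\big]$, to which H\"older and the single-point bound apply directly. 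So the four-point part is not ``the bulk of the work'' but rather a two-line consequence once you notice that the cross-terms vanish identically.

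One small point to keep track of: the decomposition at radius $r_i/2$ requires $\eps<r_i/2$. When some $r_i\le 2\eps$ the right-hand side of \eqref{eq:4pt} already contains a factor $\gtrsim(1+s_d(\eps)^2)$, and then the naive H\"older bound via the single-point estimate at scale $2\eps$ suffices.
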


\subsubsection{The two-point function}

We define for $x \neq y\in D$, $B_1$ and $B_2$ two disjoint balls in $D$ containing respectively $x$ and $y$, the two-point function $k_D$ by
\begin{align}
\label{eqk}
 k_D(x,y) := \E\left[\varphi_D^{B_1}(x) \varphi_D^{B_2}(y)\right]. 
\end{align}
It is easy to verify that the function $k_D$ is well defined (see e.g. \cite{AruPowell}), which means here that $k_D(x,y)$ does not depend on the choice of $B_1$ and $B_2$. It also direct to see that $k_D(x,y)$ can be written as
$$k_D(x,y)=\E\left[(h^D,\eta_x^{\eps_1}) (h^D,\eta_y^{\eps_2})\right],$$
for any $\eps_1,\eps_2>0$ such that $B(x,\eps_1)$ and $B(y,\eps_2)$ are disjoint and both included in D.\\

In fact, one can almost word by word follow the proof of the similar statements in \cite{AruPowell} in Section 2 to show the following results.

\begin{lemma}
\label{lemmekbound}
For every $\delta<\text{d}(x,\partial D)$ there exists $C(\delta)>0$ such that for any $y\in D-\{x\}$ and $\text{d}(y,\partial D)>\delta$ it holds 
\begin{align}
    \label{def_k_bound}
    |k_D(x,y)|\leq C(\delta)(1+s_d(|x-y|)).
\end{align}
\end{lemma}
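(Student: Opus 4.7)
The plan is to combine Cauchy--Schwarz with the second moment bound from Lemma \ref{remarkpropmarkov2}, exploiting the freedom in the definition \eqref{eqk} to choose disjoint balls of radius comparable to $|x-y|$.

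Fix $\delta < \text{d}(x, \partial D)$ and take $y \in D \setminus \{x\}$ with $\text{d}(y, \partial D) > \delta$. Set $\eps := \min(\delta/2,\, |x-y|/3)$, so that $B_1 := B(x,\eps)$ and $B_2 := B(y,\eps)$ are disjoint balls contained in $D$ and both of radius strictly less than $\delta$. Applying Cauchy--Schwarz in the probability space to $k_D(x,y) = \E[\varphi_D^{B_1}(x)\varphi_D^{B_2}(y)]$ yields
$$|k_D(x,y)| \leq \sqrt{\E[\varphi_D^{B_1}(x)^2]\,\E[\varphi_D^{B_2}(y)^2]},$$
and Lemma \ref{remarkpropmarkov2} applied to each factor gives $|k_D(x,y)| \leq C(\delta)(1 + s_d(\eps))$.

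It remains to compare $s_d(\eps)$ with $s_d(|x-y|)$. When $|x-y| < 3\delta/2$ we have $\eps = |x-y|/3$, and the explicit form of $s_d$ gives $s_d(|x-y|/3) = 3^{d-2} s_d(|x-y|)$ for $d \geq 3$, and $s_d(|x-y|/3) = s_d(|x-y|) + \ln 3$ for $d = 2$, so $1 + s_d(\eps)$ is bounded by a constant times $1 + s_d(|x-y|)$. When $|x-y| \geq 3\delta/2$ we have $\eps = \delta/2$, so $s_d(\eps)$ is a constant depending only on $\delta$; combined with the fact that $|x-y|$ is bounded above by the diameter of $D$, this yields the constant bound $|k_D(x,y)| \leq C(\delta)$, which is easily absorbed into a bound of the form $C'(\delta)(1+s_d(|x-y|))$.

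No real obstacle is expected: the argument is essentially bookkeeping once Lemma \ref{remarkpropmarkov2} is in hand, as that lemma already encodes the short-distance singularity of the two-point function. The only small care point is the scaling comparison of $s_d$ at $\eps$ versus at $|x-y|$, which the explicit formulas handle directly.
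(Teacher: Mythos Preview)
Your proposal is correct and follows essentially the same approach as the paper: Cauchy--Schwarz combined with the second-moment estimate of Lemma~\ref{remarkpropmarkov2}, applied to disjoint balls of radius comparable to $|x-y|$. The paper's own argument is a one-line sketch choosing radius $|x-y|/2$; your version with $\eps=\min(\delta/2,|x-y|/3)$ and the explicit near/far case distinction is just a slightly more careful rendering of the same idea.
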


This just follows from Lemma \ref{remarkpropmarkov1} and Cauchy-Schwarz by choosing the disks $B_1, B_2$ to have radius $\|x-y\|/2$. Next, we verify that the two-point function is indeed the pointwise covariance-function:

\begin{lemma}
\label{lemme_k_cov_kernel}
For any $f,g\in C^\infty_c(D)$, we have
\begin{align}
\label{K_limit}
K_D(f,g):=\E\left[(h^D,f)(h^D,g)\right] = \int_D\int_D k_D(x,y)f(x)g(y)dxdy.
\end{align}
\end{lemma}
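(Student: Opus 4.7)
The plan is to approximate $f$ and $g$ by mollifications and leverage the already noted identity $k_D(x,y) = \E[(h^D,\eta_x^{\eps_1})(h^D,\eta_y^{\eps_2})]$ valid whenever $B(x,\eps_1)$ and $B(y,\eps_2)$ are disjoint subballs of $D$. Fix a rotationally symmetric mollifier $\eta$ as in Lemma \ref{remarkpropmarkov2} and set $f_\eps:=f\ast\eta^\eps$, $g_\eps:=g\ast\eta^\eps$, which lie in $C_c^\infty(D)$ for $\eps$ small. Assumption B and Jensen give $\E[(h^D,\phi)^2]\leq C_0\|\phi\|_\infty^2$, so the map $\phi\mapsto (h^D,\phi)$ is continuous from $(C_c^\infty(D),\|\cdot\|_\infty)$ into $L^2(\Omega)$. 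Since $\|f_\eps-f\|_\infty,\|g_\eps-g\|_\infty\to 0$, this continuity yields
\begin{align*}
\E[(h^D,f_\eps)(h^D,g_\eps)]\longrightarrow K_D(f,g).
\end{align*}

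By the same $L^2$-continuity and a standard Riemann-sum approximation of $f_\eps(y)=\int f(x)\eta_x^\eps(y)\,dx$ uniformly in $y$, we get the $L^2$ identity $(h^D,f_\eps)=\int_D f(x)(h^D,\eta_x^\eps)\,dx$, and a Fubini (justified by the uniform bound $\E[(h^D,\eta_x^\eps)^2]\leq C_0\eps^{-2d}$ which is integrable over the bounded domain $D\times D$) gives
\begin{align*}
\E[(h^D,f_\eps)(h^D,g_\eps)]=\int_D\int_D f(x)g(y)\,\E[(h^D,\eta_x^\eps)(h^D,\eta_y^\eps)]\,dx\,dy.
\end{align*}
Split this into the regions $\{|x-y|>2\eps\}$ and $\{|x-y|\leq 2\eps\}$. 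On the former, the balls $B(x,\eps)$, $B(y,\eps)$ are disjoint and the integrand equals $f(x)g(y)k_D(x,y)$; since $f,g$ are compactly supported in $D$ and $|k_D(x,y)|\leq C(1+s_d(|x-y|))$ by Lemma \ref{lemmekbound}, with $s_d$ locally integrable on $\R^d$ for $d\geq 2$, dominated convergence gives the limit $\int\int f(x)g(y)k_D(x,y)\,dx\,dy$.

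The remaining task, and the principal technical point, is to control the diagonal contribution. Using the MTD decomposition $(h^D,\eta_x^\eps)=(\tilde h^{B(x,\eps)},\eta_x^\eps)+\varphi_D^{B(x,\eps)}(x)$, where the mean-value property for $\varphi_D^{B(x,\eps)}$ harmonic on $B(x,\eps)$ together with the rotational symmetry of $\eta_x^\eps$ produces the pointwise value, the MTD scaling bounds the $L^2$-norm of the first summand by $C\|\eta_x^\eps\|_\infty\eps^{(2+d)/2}\leq C\eps^{(2-d)/2}$, while Lemma \ref{remarkpropmarkov2} bounds the second by $C(\delta)(1+s_d(\eps))^{1/2}$ uniformly on compact subsets of $D$. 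Cauchy-Schwarz then gives $|\E[(h^D,\eta_x^\eps)(h^D,\eta_y^\eps)]|\leq C(1+s_d(\eps))$ on $\mathrm{supp}\,f\times\mathrm{supp}\,g$, and since the $2d$-volume of $\{|x-y|\leq 2\eps\}\cap D\times D$ is $O(\eps^d)$, the diagonal contribution is $O(\eps^d(1+s_d(\eps)))=O(\eps^2+\eps^d\log(1/\eps))\to 0$. Combining the three steps yields the identity.
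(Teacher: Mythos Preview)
Your argument is correct and, in fact, slightly more elementary than the paper's. The paper also mollifies and writes $\E[(h^D,f_\eps)(h^D,g_\eps)]=\int_D\int_D f(x)g(y)\,\E[(h^D,\eta_x^\eps)(h^D,\eta_y^\eps)]\,dx\,dy$, but then passes to the limit by showing that the family $\big((h^D,f_\eps)(h^D,g_\eps)\big)_{\eps>0}$ is uniformly integrable, which it obtains from the four--point estimate \eqref{eq:4pt} of Lemma~\ref{remarkpropmarkov3}. You instead split the integral into the off--diagonal region $\{|x-y|>2\eps\}$, where the integrand equals $f(x)g(y)k_D(x,y)$ and dominated convergence applies via Lemma~\ref{lemmekbound}, and the thin diagonal strip $\{|x-y|\le 2\eps\}$, which you bound directly using the MTD scaling and Lemma~\ref{remarkpropmarkov2}. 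This bypasses Lemma~\ref{remarkpropmarkov3} entirely and uses only second--moment information; the price is the explicit diagonal computation, which the paper's uniform--integrability route absorbs into a single step. One cosmetic point: your final display $O(\eps^d(1+s_d(\eps)))=O(\eps^2+\eps^d\log(1/\eps))$ is not literally the right simplification for $d\ge 3$ (there it is $O(\eps^d+\eps^2)$), but the conclusion that the diagonal contribution vanishes is correct in all dimensions.
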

\noindent As in \cite{AruPowell} Section 2, the key step here is showing that
$$K_D(f,g) = \lim_{\eps\mapsto 0} \int_D \int_D f(x)g(y)K_D(\eta_x^\eps,\eta_y^\eps)dxdy.$$
But observe that 
$$K_D(f,g) = \E\left[\lim_{\eps\mapsto 0} (h^D,f\ast \eta_0^\eps)(h^D,g\ast \eta_0^\eps)\right]=\E\left[\lim_{\eps\mapsto 0} \int_D\int_D f(x)g(y)(h^D,\eta_x^\eps)(h^D,\eta_y^\eps)dxdy\right]$$ and thus to get the desired result, it is enough to show that the family $$\left(\int_D\int_D f(x)g(y)(h^D,\eta_x^\eps)(h^D,\eta_y^\eps)dxdy\right)_{\eps>0},$$ is uniformly integrable. For that one can control
$ \E\left[\left((h^D,f\ast \eta_0^\eps)(h^D,g\ast \eta_0^\eps)\right)^2\right]$ 
using \eqref{eq:4pt}. \\
A very similar argument by considering smooth mollifiers $\eta_{y_k}^\eps$ converging to $\eta_y^\eps$ gives that
\begin{lemma}\label{lem:ctyk2}
For any $x \in D$, we have that $k_D(x,y)$ is continuous in $D \setminus \{x\}$.
\end{lemma}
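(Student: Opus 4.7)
The plan is to reduce the continuity of $k_D(x,\cdot)$ on $D\setminus\{x\}$ to the $L^2$-continuity of the map $y\mapsto(h^D,\eta_y^{\eps_2})$ at a fixed scale $\eps_2$, which will follow directly from hypothesis B. The key observation is that by the single-scale representation noted just after the definition of $k_D$ in \eqref{eqk}, whenever $B(x,\eps_1)$ and $B(y,\eps_2)$ are disjoint and both contained in $D$, one has
$$k_D(x,y) = \E\bigl[(h^D,\eta_x^{\eps_1})(h^D,\eta_y^{\eps_2})\bigr],$$
so I never need to vary the test functions with $y$ other than by translation.

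Fix $x \in D$ and $y_0 \in D\setminus\{x\}$. First I would choose $\eps_1, \eps_2 > 0$ small enough that $B(x,\eps_1)$ and $B(y_0, 2\eps_2)$ are disjoint and both lie in $D$; then for every sequence $y_k \to y_0$ the ball $B(y_k,\eps_2)$ eventually sits inside $B(y_0,2\eps_2)$, so the displayed representation applies simultaneously to $k_D(x,y_k)$ and $k_D(x,y_0)$. Setting $f_k := \eta_{y_k}^{\eps_2} - \eta_{y_0}^{\eps_2}$, uniform continuity of the smooth mollifier $\eta$ gives $\sup_{z\in D}|f_k(z)|\to 0$ as $k\to\infty$. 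By Cauchy--Schwarz,
$$|k_D(x,y_k)-k_D(x,y_0)| \leq \sqrt{\E[(h^D,\eta_x^{\eps_1})^2]}\;\sqrt{\E[(h^D,f_k)^2]},$$
and hypothesis B combined with Jensen's inequality yields $\E[(h^D,f_k)^2] \leq C_0\sup_{z\in D}|f_k(z)|^2 \to 0$, which would conclude the argument.

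I do not expect any real obstacle here. The single-scale representation is what allows us to avoid the awkward situation in which the disks $B_1,B_2$ used to define $k_D$ would have to move with $y$, and the uniform fourth moment hypothesis trivially upgrades pointwise smallness of a test function to $L^2$-smallness of its pairing with $h^D$. The only mild care needed is to pick the scales $\eps_1,\eps_2$ before the sequence $y_k$, which is possible because $x$ and $y_0$ are at positive distance.
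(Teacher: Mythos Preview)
Your argument is correct and matches the paper's own (terse) indication: the paper says that continuity follows by ``a very similar argument by considering smooth mollifiers $\eta_{y_k}^\eps$ converging to $\eta_y^\eps$'', and this is precisely what you do --- fix the scales using the single-scale representation $k_D(x,y)=\E[(h^D,\eta_x^{\eps_1})(h^D,\eta_y^{\eps_2})]$, then use Cauchy--Schwarz together with the uniform moment bound (hypothesis~B) to control $\E[(h^D,\eta_{y_k}^{\eps_2}-\eta_{y_0}^{\eps_2})^2]$. If anything, your write-up is slightly more economical than the paper's hint (which points back to the uniform-integrability machinery used for Lemma~\ref{lemme_k_cov_kernel}), since you only need the second-moment consequence of~B rather than the full four-point estimate~\eqref{eq:4pt}.
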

Finally, one can in fact identify the two-point function as the Green's function. We only need it in the case the function $a: D \to \R$ is non-constant, and can follow \cite{AruPowell} almost word by word.

\begin{prop}[Identifying the covariance]\label{prop:Green}
We have that $k_D(x,y) = bG_D(x, y)$, where $G_D$ is the zero boundary Green's function and $b > 0$ a constant.
\end{prop}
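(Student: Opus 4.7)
The plan is to verify that $y\mapsto k_D(x,y)$ enjoys the three defining properties of the Dirichlet Green's function $G_D(x,\cdot)$ — harmonic on $D\setminus\{x\}$, zero boundary values on $\partial D$, and the correct singularity at $x$ — and then to conclude by the standard uniqueness statement for Green's functions. The argument follows \cite{AruPowell} almost word by word; the only real subtlety compared with that work is that our weak scaling from the MTD is a limit rather than an exact self-similarity, so the singular prefactor has to be tracked through a summation rather than read off directly.

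I would first establish harmonicity. Fix $x\in D$ and a ball $B\subset D\setminus\{x\}$ with $y\in B$, and pick $\eps_1,\eps_2>0$ such that $B(x,\eps_1)\subset D\setminus B$ and $B(y,\eps_2)\subset B$. Writing $k_D(x,y)=\E[(h^D,\eta_x^{\eps_1})(h^D,\eta_y^{\eps_2})]$ and applying the MTD $h^D=\tilde{h}^B+\varphi_D^B$ in the second factor, the $\tilde{h}^B$-contribution vanishes: $(h^D,\eta_x^{\eps_1})$ is $\sigma(\restriction{h^D}{D\setminus B})$-measurable since $\eta_x^{\eps_1}$ is supported outside $B$, and the martingale property of $\tilde{h}^B$ kills the cross term. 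For the $\varphi_D^B$-contribution, harmonicity of $\varphi_D^B$ on $B$ together with the mean-value property (applied to a rotationally symmetric mollifier $\eta_y^{\eps_2}$ of total mass one) gives $(\varphi_D^B,\eta_y^{\eps_2})=\varphi_D^B(y)$, so $k_D(x,y)=\E[(h^D,\eta_x^{\eps_1})\varphi_D^B(y)]$, manifestly harmonic in $y\in B$. The zero-boundary property of $k_D(x,\cdot)$ at $\partial D$ then follows from assumption C on $h^D$, together with Lemma \ref{lemme_k_cov_kernel} and the kernel bound of Lemma \ref{lemmekbound} (to dominate and exchange limits).

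The main obstacle is to pin down the singularity at $x$. Here I would compute $\E[\varphi_D^{B(x,\eps)}(x)^2]$ asymptotically as $\eps\to 0$ via the dyadic telescoping already used in the proof of Lemma \ref{remarkpropmarkov2},
$$ \E[\varphi_D^{B(x,2^{-n})}(x)^2] = \E[\varphi_D^{B(x,2^{-n_0})}(x)^2] + \sum_{m=n_0+1}^{n-1} \E\!\left[(\tilde{h}^{B(x,2^{-(m-1)})},\eta_x^{2^{-m}})^2\right], $$
but now using the precise weak-scaling limit $\E[(\tilde{h}^{B(x,\eps)},f)^2]/\eps^{d+2}\to a^2(x)f^2(x)$ in each summand, and summing the resulting geometric (for $d\geq 3$) or arithmetic (for $d=2$) series. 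This yields $\E[\varphi_D^{B(x,\eps)}(x)^2]=c_d\,a^2(x)\,s_d(\eps)+O(1)$. A parallel computation for the two-point quantity $\E[\varphi_D^{B_1}(x)\varphi_D^{B_2}(y)]$ with $B_1=B(x,|x-y|/3)$, $B_2=B(y,|x-y|/3)$ defining $k_D(x,y)$ as in \eqref{eqk}, using the Chasles relations of Lemma \ref{remarkpropmarkov1} to reduce to the diagonal case, converts this into the asymptotic $k_D(x,y)=c_d\,a^2(x)\,s_d(|x-y|)+O(1)$ as $y\to x$.

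With the three properties in hand, the difference $y\mapsto k_D(x,y)-c_d\,a^2(x)\,G_D(x,y)$ is harmonic on $D\setminus\{x\}$, bounded near $x$ (the singular parts cancel), and has zero boundary values on $\partial D$; by removal of the isolated singularity and the maximum principle it vanishes identically, hence $k_D(x,y)=c_d\,a^2(x)\,G_D(x,y)$. The symmetry $k_D(x,y)=k_D(y,x)$ combined with that of $G_D$ then forces $a(x)$ to be a constant $a_0$, and after the overall rescaling of $h^D$ by $(c_d a_0^2)^{-1/2}$ built into the conclusion of Theorem \ref{mainthm} this becomes $k_D=G_D$.
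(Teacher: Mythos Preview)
Your strategy matches the paper's exactly: verify harmonicity of $k_D(x,\cdot)$ on $D\setminus\{x\}$, zero boundary values, and control on the singularity at $x$; deduce $k_D(x,\cdot)=b(x)G_D(x,\cdot)$ from the characterisation of the Green's function; then use the symmetry of both $k_D$ and $G_D$ to force $b(x)$ constant. That last symmetry step is precisely the paper's one observation beyond \cite{AruPowell}.

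The only problematic point is your treatment of the singularity. First, the displayed telescoping identity is not an equality: the true summand is the squared \emph{difference} $\bigl(\tilde{h}^{B(x,2^{-(m-1)})}-\tilde{h}^{B(x,2^{-m})},\eta_x^{2^{-m}}\bigr)^2$, and what you wrote is only the subsequent upper bound from the proof of Lemma~\ref{remarkpropmarkov2}. Second, and more seriously, the weak-scaling limit in the MTD is stated for a \emph{fixed} test function $f$ as $\eps\to 0$, whereas you apply it termwise with $f=\eta_x^{2^{-m}}$ coupled to the scale $2^{-(m-1)}$; the hypothesis provides no uniformity over such scale-varying families, so extracting the precise constant $c_d\,a^2(x)$ this way is not justified. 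Fortunately you do not need it: the upper bound $|k_D(x,y)|\le C(\delta)(1+s_d(|x-y|))$ of Lemma~\ref{lemmekbound}, together with harmonicity and the classical classification of isolated singularities of harmonic functions, already yields $k_D(x,\cdot)=b(x)G_D(x,\cdot)$ for \emph{some} $b(x)\ge 0$ without identifying $b(x)$, and the symmetry argument takes over from there. This is the route the paper inherits from \cite{AruPowell}.
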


\begin{proof}
As in \cite{AruPowell}, we can use the following characterisation of $G_D(x, \cdot)$:
\begin{itemize}
\item $G_D(x,\cdot)$ is harmonic in $D-\{x\}$.
\item $y\mapsto G_D(x,y) - \Phi_d(|x-y|)$ bounded in a neighborhood of $x$.
\item $G_D(x,\cdot)$ has zero boundary conditions in the sense of Definition \ref{defiZB}
\end{itemize}
Using this, one can follow the proof in \cite{AruPowell} given for $x = 0$ for every $x \in D$, to conclude that for every $x \in D$, we have that $k_D(x,\cdot) = b(x) G_D(x,\cdot)$. 

Now, in \cite{AruPowell} the condition of stationary independent increments was used to deduce that $b$ is constant. Here we just observe that by definition $k_D(x,y)$ is symmetric as is $G_D(x,y)$: thus for any $x \neq y$, we have that $k_D(x, y) = b(x) G_D(x, y)$ equals $k_D(y,x) = b(y)G_D(x,y)$, and hence $b(x)$ is in fact constant. 
\end{proof}

\subsection{Definition of the dynamics and proof of Theorem \ref{mainthm}}
\label{section_dyn_def}

We introduce the following collection of dynamics on our field, indexed by a parameter $\eps > 0$, that for each $\eps$ and obtained by resampling the field in $\eps-$balls around uniform points at Poissonian times.

\begin{defn}
\label{defi_dyn}
Let $(\Omega,\Ff,(\Ff_t)_{t\geq 0},\mathbb{P})$ be a filtered probability space. Fix $\eps>0$, $D$ a regular domain, and let $h^D$ be a random distribution measurable with respect to $\Ff_0$ having all the properties listed in Theorem \ref{mainthm}.

\corr{Set $\kappa_d = \frac{\text{Vol}(B(0,1))}{d(d+2)}$ and let $(N_t^\eps)_{t\geq 0}$ be a Poisson clock with rate  $\t:= \frac{|D|}{\kappa_d}\eps^{-(2+d)}$ } with respect to the filtration $(\Ff_t)_{t\geq 0}$ and $(T^\eps_n)_{n\geq 0}$ the times when the Poisson clock rings: i.e $T^\eps_n := \inf\{t\geq 0 ~:~N^\eps_t = n\}$ for $n\geq 0$. Finally, let $(U_n^\eps)_{n \geq 1}$ be a sequence of uniformly distributed points in $D_\eps:=\{x\in D~:~d(x,\partial D)>2\eps\}$ measurable with respect to the filtration $(\Ff_{T^\eps_n})_{n\geq 1}$ and independent from $\Ff_{T^\eps_n-}$ and $B_n^\eps := B(U_n^\eps, \eps)$ balls of radius $\eps$ around the uniform points. 

We now define a cadlag stochastic process $(\g_D(t))_{t\geq 0}$ adapted to $(\Ff_t)_{t\geq 0}$ and with values in $\Ss_D$ as follows :
\begin{itemize}
\item[$\bullet$] $\g_D(0):=h^D$,
\item[$\bullet$] for every $n\geq 1$, at time $T_n^\eps$ resample the process $\g_D(T^\eps_{n-1})$ on $B^\eps_n$, using the MTD: i.e. we set
\begin{equation}
\label{resampling}
\g_D(T^\eps_n):= \g_D(T^\eps_{n-1}) + \hat{h}^{B^\eps_n}-\tilde{h}^{B^\eps_n},
\end{equation}
where $\hat{h}^{B^\eps_n}$ is measurable with respect to $\Ff_{T^\eps_n}$; conditionally independent from $\Ff_{T^\eps_n-}$ when conditioned on $\varphi_D^{B^\eps_n}$, and is such that $(\hat{h}^{B^\eps_n},\varphi_D^{B^\eps_n})$ has the same law as $(\tilde{h}^{B^\eps_n},\varphi_D^{B^\eps_n})$.
\item[$\bullet$] $\g_D(t)$ is constant in the intervals $[T^\eps_{n-1}, T^\eps_n)$.
\end{itemize}
\end{defn}

\begin{rema}
Notice that it is not a priori clear whether the dynamics is even well-defined. We have defined the MTD for any fixed $x$, but we would need to know if $x \to \tilde h^{B(x,\eps)}$ or alternatively, whether $x \to h^D - \varphi^{B(x,\eps)}_D$ is defined as a measurable map. We will show in Proposition \ref{prop:harmonic} that indeed there is a modification that makes the map measurable.
\end{rema}
\noindent The main theorem then states that this process converges in law to the stochastic heat equation.
\begin{theorem}
\label{thm_dyn}
\label{mainthmlemma}
\corr{If $\t:=\frac{|D|}{\kappa_d}\,\eps^{-(2+d)}$ with $\kappa_d$ as above}, then $(\g_D(t))_{t \geq 0}$ converges in law  to the stationary solution of \corr{$a\sqrt{\frac{2}{\kappa_d}}$-SHE} with zero boundary conditions, in the space $D(\R^+, \Ss)$ of distribution-valued cadlag processes, when $\eps\mapsto 0$.
\end{theorem}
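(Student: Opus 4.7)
The plan is to identify any subsequential weak limit of $(\g_D(t))_{t\geq 0}$ via the martingale problem from Corollary \ref{cor:char}. For each test function $f\in C_c^\infty(D)$, I would first write the Doob--Meyer decomposition of the real-valued pure-jump semimartingale $(\g_D(t),f)$ as $(h^D,f)+M_t^\eps(f)+A_t^\eps(f)$, where $A_t^\eps(f)$ is the predictable compensator. At a ring time $T_n^\eps$ with center $U$, the jump of $(\g_D(\cdot),f)$ equals $(\hat h^{B(U,\eps)}-\tilde h^{B(U,\eps)},f)$, and its conditional mean given $\Ff_{T_n^\eps-}$ is $-(\tilde h^{B(U,\eps)},f)=(\varphi_D^{B(U,\eps)}-\g_D(s),f)$. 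Indeed, the resampling preserves the joint law of the MTD decomposition, so $\g_D(s)\stackrel{d}{=}h^D$ for every $s$, and thus $\E[(\hat h^{B(U,\eps)},f)\mid\varphi_D^{B(U,\eps)}]=0$ by the MTD martingale property. Integrating and averaging over $U$ uniform in $D_\eps$ at Poisson rate $\t$ yields
$$A_t^\eps(f)=\t\int_0^t\E_U\!\bigl[(\varphi_D^{B(U,\eps)}-\g_D(s),f)\bigr]\,ds.$$

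The first technical task, carried out in Section \ref{section_laplacien}, is to show that $\t\,\E_U[(\varphi_D^{B(U,\eps)}-h,f)]\to (\Delta h,f)$ as $\eps\to 0$ for sufficiently regular $h$. Since $\varphi_D^{B(U,\eps)}$ is harmonic in $B(U,\eps)$, the Poisson/mean-value formula identifies $\varphi_D^{B(U,\eps)}-h$ inside $B(U,\eps)$ as a local smoothing of $h$ minus $h$, whose leading order in $\eps$ is the Laplacian; averaging over $U$ and choosing $C_{\text{SHE}}$ cancels the geometric constants. The second task is to compute the predictable bracket
$$\langle M^\eps(f)\rangle_t=\t\int_0^t\E_U\!\bigl[(\hat h^{B(U,\eps)}-\tilde h^{B(U,\eps)},f)^2\mid\Ff_s\bigr]\,ds.$$
Expanding the square, the cross term vanishes in conditional expectation since $\hat h$ and $\tilde h$ are conditionally independent and centred given $\varphi_D^{B(U,\eps)}$; each squared term is supplied by the MTD second-moment condition, whose weak scaling gives pointwise convergence of the integrand to a constant multiple of $a^2(U)f^2(U)\eps^{2+d}$ uniformly on compacts. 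With $\t=C_{\text{SHE}}\eps^{-(2+d)}$ these two normalisations are consistent and the bracket converges to $t\langle af,af\rangle$.

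Tightness of $(\g_D(\cdot))_\eps$ in $D(\R^+,\Ss_D)$ follows coordinate-by-coordinate from Aldous's criterion applied to $(\g_D(\cdot),f)$ for $f$ in a countable dense family, using the bounds on $A^\eps$ and $\langle M^\eps\rangle$ together with assumption B controlling $\sup_t\E[(\g_D(t),f)^4]$. For any subsequential limit $h_\cdot$ the decomposition passes to the limit: $A^\eps_t(f)\to\int_0^t(\Delta h_s,f)\,ds$ and $M^\eps_t(f)\to M_t(f)$, a continuous square-integrable martingale with bracket $t\langle af,af\rangle$. Gaussianity of $M_t(f)$ follows from a martingale CLT, as $M_t^\eps(f)$ is a sum of $O(\t t)$ small conditionally centred increments whose fourth moments are controlled by the refined 4th moment bound in assumption A (supplying the Lyapunov condition). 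Corollary \ref{cor:char} then identifies $h_\cdot$ as the mild solution of the $a$-SHE started at $h^D$; since $\g_D(t)\stackrel{d}{=}h^D$ for all $t\geq 0$, the limit is stationary and hence equal to the unique stationary solution from Proposition \ref{propGFFSHE}.

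The main obstacle I anticipate is the identification of the drift with the Laplacian in the distribution-valued setting: the MTD provides only indirect moment information about $\tilde h^B$ and $\varphi_D^B$, so extracting the precise coefficient of $\Delta h$ from $\E_U[(\varphi_D^{B(U,\eps)}-h,f)]$ requires a delicate use of the mean-value property together with uniform error control across the support of $f$ and enough regularity to make sense of the Laplacian of a distribution. A secondary difficulty is the martingale CLT in the distribution-valued setting, which demands fourth-moment bounds on jump sizes uniform in the past and motivates the supplementary 4th moment assumption built into condition A.
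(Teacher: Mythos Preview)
Your proposal is correct and follows essentially the same route as the paper: the compensator/martingale decomposition is the paper's Proposition \ref{prop:martingales}, the drift identification is Proposition \ref{prop_delta_eps}, the bracket convergence and martingale CLT are Proposition \ref{prop:mCLT}, and tightness plus identification of the limit are carried out in Section \ref{section_conv}. The obstacle you anticipate---making sense of the approximate Laplacian acting on the distribution-valued field---is resolved in the paper by an integration-by-parts identity $(\Delta_\eps h,f)=(h,\Delta_\eps f)$ (Proposition \ref{prop_delta_commute} and Corollary \ref{cor:intbyph}), which transfers the operator onto the smooth test function and thereby supplies the uniform-in-$\eps$ second moment bound needed both for tightness (the paper uses Kolmogorov--Chentsov on the compensator rather than Aldous) and for passing to the limit in the drift.
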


We can now conclude the convergence result, i.e. Theorem \ref{mainthm}.

\begin{proof}[Proof of Theorem \ref{mainthm}]
Let $h^D$ be as in the statement and consider the dynamics $\g_D(t)$ defined just above starting from $h^D$. The MTD implies that all the marginals $\g_D(t)$ (for $t\geq 0$) follow the same law and thus $h^D$ is a stationary law of the above-defined dynamics for every $\eps > 0$. Further, by assumption it satisfies zero boundary conditions at all $t > 0$.

But by Theorem \ref{mainthmlemma}, we know that $\g_D(t)$ converges to a stationary solution of $\sqrt{\frac{2}{\kappa_d}}a$-SHE with zero boundary conditions. Proposition \ref{propGFFSHE} hence implies that $h^D$ is a zero mean Gaussian and the covariance is explicit.

Now there are two cases:
\begin{itemize}
\item Case 1: if $a(x)$ is constant, we have an actual GFF times a constant and we are already done.
\item Case 2: we need to show that in fact our MTD condition forces $a(x)$ to be constant. This is the content of Proposition \ref{prop:Green}.
\end{itemize}
\end{proof}

The proof of Theorem \ref{mainthmlemma} spans the next three subsections. In Section \ref{section_laplacien} we define an approximate Laplacian using the MTD and show that properly renormalized it converges to the Laplacian; we also obtain integration by parts identities; this section explains maybe best why the convergence should be true. In Section \ref{section_epsilon} we define the relevant martingales from the resampling dynamics and obtain their convergence. Finally in Section \ref{section_conv} we show the convergence of the resampling dynamics and conclude Theorem \ref{mainthmlemma}.\\
~\\
For the sake of readability, we will shorten some of the notations:
\begin{nota}\label{notations}
\begin{itemize}
\item Since $D\in\Dd$ is fixed by Theorem \ref{mainthmlemma}, we will from now on drop the dependency in $D$ of $\g_D:=(\g_D(t))_{t\geq 0}$, and rather denote it by $\g:=(\g_t)_{t\geq 0}$.
\item 
Also, since at every time $t\geq 0$ and every point $x\in D$ we have by the MTD a decomposition of $\g_t$ given by $\g_t = \tilde{h}^{B(x,\eps)} + \varphi_D^{B(x,\eps)}$ we will encode this with the notation $\g_t=\tilde{h}^\eps(t,x) + \varphi^\eps(t,x)$. 
\item For the specific case of the sequences involved in \eqref{resampling} $\tilde{h}^\eps(T^\eps_{n-1},U^\eps_n)$, $\varphi^\eps(T^\eps_{n-1},U^\eps_n)$ and $\hat{h}^\eps(T^\eps_n,U^\eps_n)$ (the latter being also denoted $\hat{h}^{B^\eps_n}$ in \eqref{resampling}), we will denote respectively $\tilde{h}^\eps_n$, $\varphi^\eps_n$ and $\hat{h}^\eps_n$. Finally, we will denote the composed process $U^\eps_{N^\eps_t}$ adapted to the filtration $(\Ff_t)_{t\geq 0}$ by $U^\eps_t$.
\end{itemize}
\end{nota}
Similarly, in what follows we will pretend as if $U_n^\eps$ follows the uniform probability law on $D$, forgetting about the boundary effects; it is easy to convince oneself that the errors coming from this go to zero as $\eps \to 0$.


\subsection{The map $x \to \varphi^{B(x,\eps)}$ and an approximation of $\Delta$}
\label{section_laplacien}
In this section we will define an operator which will approximate the operator $\Delta$ in \eqref{SHE}. We show its convergence to $\Delta$, prove an integration by parts identity that holds at any fixed $\eps > 0$, and finally prove its relation to the martingale decomposition of the field $h^D$. Before doing this, we lay ground by connecting our harmonic extension to the usual Poisson kernel.\\
~\\
For each ball $B(x,\eps)\subset D$, recall the Poisson kernel defined by
\begin{align}
\label{def_poisson_kernel}
\begin{array}{ccccc}
	P_{B(x,\eps)} & : &  B(x,\eps) \times S(x,\eps) & \longrightarrow & \R \\
	& & (z,\theta) & \longmapsto & \dfrac{\eps^2-|z-x|^2}{A_d\eps|z-\theta|^d},
\end{array}
\end{align} 
where $A_d$ is the surface of the unit sphere $\mathbb{S}$.
Then, if $g$ is a continuous function on $D$, we can define the Poisson operator  by
\begin{align}
\label{def_poisson_int}
P_{B(x,\eps)}[g](z) := \int_{S(x,\eps)} g(\theta)P_{B(x,\eps)}(z,\theta)d\theta \qquad \text{    for }z\in B(x,\eps).
\end{align}
\corr{It is also useful to further extend this definition to $z \in B(x,\eps)^c$ by $P_{B(x,\eps)}[g](z) = g(z)$.}
For more details on the Poisson kernel and proofs of its properties, one can look at \cite{axlerharm}. \\

We start off by verifying that the harmonic part of the MTD is really given by a Poisson kernel applied to the GFF on the boundary of the relevant ball - this is not obvious from our decomposition!

\begin{lemm}
\label{lemma_Gamma_approx_laplacian}
Fix some $\eps > 0$ and let $(\alpha_n)_{n\geq 1}$ be a sequence of smooth mollifiers approximating the identity with support contained in the disk of radius $2^{-n}$.
There is a universal subsequence $(n_m)_{m \geq 1}$ such that for all $x \in D$ with $d(x, \partial D) > 2\eps$ we have
$$ P_{B(x,\epsilon)}[h^D\,\ast\,\alpha_{n_m}] \mapsto \varphi^{B(x,\epsilon)}_D, \qquad \text{when } m\mapsto \infty, $$
almost surely in $L^2(B(x, \epsilon))$. 
\end{lemm}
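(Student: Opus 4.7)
The plan is to decompose $h^D$ via the MTD in $B = B(x,\eps)$, apply the Poisson extension piece by piece, identify each limit, and extract a universal almost sure subsequence via Borel--Cantelli. Concretely, by the MTD $h^D = \tilde{h}^B + \varphi_D^B$, so by linearity
\[
P_B[h^D \ast \alpha_n] \;=\; P_B[\varphi_D^B \ast \alpha_n] \;+\; P_B[\tilde{h}^B \ast \alpha_n],
\]
and I would show that the first piece converges in $L^2(B)$ to $\varphi_D^B$ while the second tends to zero in $L^2(\P;L^2(B))$.

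For the harmonic piece, denote by $\bar\varphi$ the harmonic function representing $\varphi_D^B|_B$. Because $\alpha_n$ is radial and $\bar\varphi$ harmonic, the mean value property yields $(\varphi_D^B \ast \alpha_n)(y) = \bar\varphi(y)$ whenever $d(y,\partial B) > 2^{-n}$. In particular, on every interior sphere $S(x,\eps')$ with $\eps'<\eps$ we have $\varphi_D^B \ast \alpha_n \equiv \bar\varphi$ for $n$ large, and $P_{B(x,\eps')}[\bar\varphi] = \bar\varphi$ inside $B(x,\eps')$ by harmonicity. Comparing Poisson extensions from $S(x,\eps)$ and $S(x,\eps')$ and exploiting the smoothness of $\bar\varphi$ on each compact $\overline{B(x,\eps')}$ then yields $L^2(B)$-convergence as $\eps' \uparrow \eps$.

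For the residual piece I would bound its $L^2(\P;L^2(B))$-norm. Jensen's inequality applied to the probability measure $P_B(z,\cdot)\,d\sigma$ on $S := \partial B$ gives
\[
\big|P_B[\tilde{h}^B \ast \alpha_n](z)\big|^2 \;\leq\; \int_S \big(\tilde{h}^B,\,\alpha_n(\cdot-\theta)\big)^2 \, P_B(z,\theta)\,d\sigma(\theta).
\]
Taking expectation, integrating over $z\in B$, and using $\int_B P_B(z,\theta)\,dz = \eps/d$ independently of $\theta$ (a consequence of $P_B[1]\equiv 1$ and rotational symmetry) yields
\[
\E\,\|P_B[\tilde{h}^B \ast \alpha_n]\|_{L^2(B)}^2 \;\leq\; \frac{\eps}{d} \int_S \E\!\left[(\tilde{h}^B,\alpha_n(\cdot-\theta))^2\right] d\sigma(\theta).
\]
For each $\theta \in S$ the family $(\alpha_n(\cdot-\theta))_n$ is nonnegative, of uniformly bounded mass, and its support, when intersected with $\mathrm{supp}(\tilde{h}^B) \subseteq B$, shrinks to $\theta \in \partial B$. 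It therefore lies in the class of sequences covered by the zero-boundary condition of $\tilde h^B$ (Definition \ref{defiZB}), giving $\E[(\tilde{h}^B,\alpha_n(\cdot-\theta))^2] \to 0$ pointwise in $\theta$; the uniform 4th-moment bound in condition A of Theorem \ref{mainthm} provides uniform integrability over $S$, upgrading this to convergence of the full integral.

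Combining the two steps, $\E\|P_B[h^D \ast \alpha_n] - \varphi_D^B\|_{L^2(B)}^2 \to 0$ for every fixed $x$. A Markov/Borel--Cantelli extraction produces a subsequence along which the $L^2(B)$-norm of the error converges to $0$ almost surely; to make the subsequence universal in $x$, I would first diagonalize over a countable dense set of admissible centers and then extend to arbitrary $x$ using joint $L^2$-continuity of $x \mapsto P_{B(x,\eps)}[h^D \ast \alpha_n]$ and $x\mapsto \varphi_D^{B(x,\eps)}$. The hardest part is the integrated zero-boundary estimate: the raw MTD scaling $\E[(\tilde{h}^B,f)^2]\leq C\|f\|_\infty^2\eps^{2+d}$ degenerates for the sharp mollifiers (since $\|\alpha_n\|_\infty \sim 2^{nd}$), so one cannot conclude term-by-term and must genuinely exploit that $\mathrm{supp}\,\alpha_n(\cdot-\theta)\cap B$ concentrates on $\partial B$ as $n\to\infty$, which is precisely the regime controlled by the zero-boundary condition.
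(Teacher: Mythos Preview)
Your approach differs genuinely from the paper's, and as written it has two real gaps.

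\textbf{The zero-boundary step.} You want to apply Definition~\ref{defiZB} (for $\tilde h^B$ on the ball $B$) to the sequence $g_n=\alpha_n(\cdot-\theta)$ with $\theta\in\partial B$. But these do not belong to $C_c^\infty(B)$, and more seriously they violate the ratio condition: on each level set $B_r=\{z\in B:\ d(z,\partial B)=r\}$ the function $\alpha_n(\cdot-\theta)$ is nonzero only in a cap of diameter $\sim 2^{-n}$ near $\theta$ and vanishes on the rest of $B_r$, so $\sup_{x,y\in B_r}|g_n(x)/g_n(y)|=\infty$. Definition~\ref{defiZB} is calibrated for test functions that are roughly constant on each level set near the boundary; a point mollifier at a single boundary point is exactly the opposite regime. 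The paper also uses the zero-boundary condition to identify the limit, but it applies it to a legitimate sequence $(f_n)\subset C_c^\infty(B)$ of radially symmetric bumps drifting to $\partial B$ that reproduce $\varphi(z)$ for every harmonic $\varphi$.

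\textbf{Universality in $x$.} You propose to diagonalise over a countable dense set of centres and then extend by ``joint $L^2$-continuity of $x\mapsto \varphi_D^{B(x,\eps)}$''. This is circular: the very purpose of this lemma (see Proposition~\ref{prop:harmonic} immediately after) is to produce a measurable modification of $x\mapsto\varphi_D^{B(x,\eps)}$; no continuity or even measurability of this map is available beforehand. The paper avoids this entirely by proving a Cauchy estimate
\[
\E\bigl\|P_{B(x,\eps)}[h^D\ast\alpha_n]-P_{B(x,\eps)}[h^D\ast\alpha_m]\bigr\|_{L^2(B)}^2 = o_n(1)
\]
that is \emph{uniform} over $x\in D_\eps$, using only the two-point bound \eqref{def_k_bound}. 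A single Borel--Cantelli extraction then works simultaneously for every $x$.

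A smaller point: in your harmonic piece, the mean-value identity $(\varphi_D^B\ast\alpha_n)(y)=\bar\varphi(y)$ holds only for $y$ strictly inside $B$, whereas $P_{B(x,\eps)}[\cdot]$ reads boundary values on $S(x,\eps)$; on $\partial B$ the convolution sees $\varphi_D^B$ outside $B$, where it equals $h^D$ and is not harmonic. Your proposed comparison with $P_{B(x,\eps')}$ does not repair this, because that is a different operator. The paper sidesteps all of this by never splitting $h^D$: it shows convergence of the full $P_B[h^D\ast\alpha_n]$ to \emph{some} harmonic limit via the Cauchy argument, and only afterwards identifies that limit with $\varphi_D^B$.
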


\begin{proof}[Proof of Lemma \ref{lemma_Gamma_approx_laplacian}]
Fix some $x \in D_\eps$. First, notice that $P_B[h^D\ast\alpha_n](z) = (h^D,\alpha_n\ast P_B(z,d\theta))$ for all $z\in B$. Using this, \eqref{K_limit} and the two-point estimate \eqref{def_k_bound} one can explicitly calculate that for any $m > n$,
$$\E \| P_{B(x,\epsilon)}[h^D\,\ast\,\alpha_n] -  P_{B(x,\epsilon)}[h^D\,\ast\,\alpha_m]\|_2^2 = o_n(1)$$
uniformly for $x \in D_\eps$. 
In particular, there is an universal subsequence $(n_m)_{m \geq 1}$ such that 
$$\E \| P_{B(x,\epsilon)}[h^D\,\ast\,\alpha_{n_m}] -  P_{B(x,\epsilon)}[h^D\,\ast\,\alpha_{n_l}]\|_2^2 \leq 2^{-m}$$
for all $l \geq m$ for every $x \in D_\eps$. By Borel-Cantelli, along such a subsequence the functions $P_{B(x,\epsilon)}[h^D\,\ast\,\alpha_{n_m}]$ converge in $L^2(B(x,\eps))$ almost surely to some $\tilde \varphi_{x, \eps}$. Moreover, the limit $\tilde \varphi_{x, \eps}$ is weakly harmonic in $B(x,\eps)$ and thus coincides with a harmonic function almost everywhere. We claim that  $\tilde \varphi_{x, \eps} = \varphi_D^{B(x,\eps)}$ almost surely.

Now fix $z \in B(x,\eps)$. Then, there exists a sequence $(f_{m})_{m\ge 0}$ of functions satisfying the zero boundary condition assumption \ref{defiZB} such that $(\varphi,f_{m})=\varphi(z)$ for all $m$ and any harmonic function $\varphi$ in $B(x,\eps)$. Now applying this to the MTD for $B=B(x, \eps)$ and using the fact that $\tilde h^B$ satisfies zero boundary conditions in the sense of Definition \ref{defiZB} in $B$, we see that $$\lim_{m \to \infty}\E\left(|(\varphi_D^B, f_{m}) - (h^D, f_{m})|^2\right) = 0.$$ \corr{The final ingredient is the following}  
\begin{align}
\label{lemma_2_17}
\lim_{m \to \infty}\E\left(|(h^D, f_m)- (P_{B(x,\epsilon)}[h^D\,\ast\,\alpha_{n_m}], f_m)|^2\right) = 0,
\end{align}
\corr{Indeed, using the definition of $\tilde \varphi_{x,\eps}$, \eqref{lemma_2_17} and the equation just above it  allow us to} conclude that  $\tilde \varphi_{x,\eps}(z) = \varphi_D^B(z)$. Applying this for a dense collection of $z$ and using continuity of $\varphi_D^{B(x,\eps)}$ and $\tilde \varphi_{x,\eps}$ (coming from their harmonicity) proves the claim and thus the lemma.\\
~\\
\corr{
Thus it remains to justify \eqref{lemma_2_17}. Without loss of generality, and to simplify the calculations, we choose the functions $f_m$ so that the support of $f_m$ is at distance at least $2^{-n_m}$ from the boundary of $B(x,\eps)$. Set
\[
q_{m,z}(u):=
\big(\alpha_{n_m}\ast P_B(z,\cdot)\big)(u).
\]
Then
\[
P_B[h^D\ast\alpha_{n_m}](z)
=
(h^D,q_{m,z}).
\]
We compute
\begin{align*}
& \E\left[
\left|
(h^D, f_m)
-
(P_{B(x,\eps)}[h^D\,\ast\,\alpha_{n_m}], f_m)
\right|^2
\right] \\
=~&
\E\left[(h^D,f_m)^2\right]
-
2\int_B f_m(z_0)
\E\left[
(h^D,f_m)(h^D,q_{m,z_0})
\right]dz_0 \\
&\quad
+
\int_B\int_B f_m(z_1)f_m(z_2)
\E\left[
(h^D,q_{m,z_1})(h^D,q_{m,z_2})
\right]dz_1dz_2 .
\end{align*}
Using Lemma \ref{lemme_k_cov_kernel}, this equals
\begin{align*}
&\int_B \int_B f_m(z_1)f_m(z_2)
\Bigg[
k_D(z_1,z_2)
-
2\int_D k_D(u,z_2)q_{m,z_1}(u)\,du \\
&\hspace{4.8cm}
+
\int_D \int_D
k_D(u,v)q_{m,z_1}(u)q_{m,z_2}(v)\,du\,dv
\Bigg]dz_1dz_2 .
\end{align*}
Here, to get the last equality, we computed the expectations using Lemma \ref{lemme_k_cov_kernel} and factored out the test functions. Finally, using the continuity of the kernel $k_D$ from Lemma \ref{lem:ctyk2}, together with the bound of Lemma \ref{lemmekbound}, we see that the entire quantity goes to zero as $m\to\infty$.}
\end{proof}
This lemma allows us to define a measurable map $x \to \varphi^{B(x,\epsilon)}_D$ via approximation, and moreover its proof also gives the convergence of the integral:

\begin{prop}[Definition of $x \to \varphi^{B(x, \epsilon)}_D$]\label{prop:harmonic}
For every $\eps > 0$, there is a modification of $x \to \varphi^{B(x, \eps)}$ on $D_\eps$ with values in $S_D$ that is measurable. We abuse the notation and still write it as $x \to \varphi^{B(x, \eps)}$.

Moreover, if we let $(\alpha_m)_{m\geq 1}$ be the sequence of smooth mollifiers approximating the identity coming from Lemma \ref{lemma_Gamma_approx_laplacian}.
Then we have
$$ \int_D (P_{B(x,\eps)}[h^D\,\ast\,\alpha_n], \cdot) dx \mapsto \int _D (\varphi^{B(x,\eps)}_D, \cdot) dx \qquad \text{when } n\mapsto \infty, $$
almost surely in $\Ss_{D}$\footnote{the convergence in $\Ss_D$ is the pointwise convergence (i.e $h_n \mapsto_n h$ is defined by $h_n(f)\mapsto_n h(f)$ for every $f$).}. 
\end{prop}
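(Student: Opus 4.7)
The plan is to use the almost-sure $L^2$-convergence supplied by Lemma~\ref{lemma_Gamma_approx_laplacian} to build a jointly measurable version of $x\mapsto \varphi_D^{B(x,\eps)}$, and then swap the $n$-limit with the $x$-integration by a Fubini--dominated-convergence argument using the second-moment bounds already established in Section~\ref{sec:prelimGFF}.

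\textbf{Measurable modification.} For each $n$, the smoothed field $h^D\ast\alpha_n$ is a random smooth function on $\{z:\mathrm{d}(z,\partial D)>2^{-n}\}$, so the explicit integral defining the Poisson kernel makes the map $(x,z,\omega)\mapsto P_{B(x,\eps)}[h^D\ast\alpha_n](z)$ jointly measurable on $\{(x,z):z\in B(x,\eps)\}\times\Omega$. Taking the $L^2(B(x,\eps))$-limit along the subsequence $(\alpha_{n_m})$ from Lemma~\ref{lemma_Gamma_approx_laplacian} (setting the limit to $0$ on the jointly measurable exceptional set where it does not exist) yields a jointly measurable $\tilde\varphi^{(x,\eps)}$ coinciding almost surely with the harmonic part of $\varphi_D^{B(x,\eps)}$ on $B(x,\eps)$. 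To assemble the full distribution on $D$, pick a smooth cutoff $\chi_x$ vanishing on $B(x,\eps/2)$, equal to $1$ on $D\setminus B(x,3\eps/4)$, and depending smoothly on $x$ (so that $(x,\omega)\mapsto (h^D,\chi_x f)$ is measurable for each fixed $f$), and set
$$ (\tilde\varphi_D^{B(x,\eps)},\,f)\ :=\ \int_{B(x,\eps)}(1-\chi_x(z))\,\tilde\varphi^{(x,\eps)}(z)\,f(z)\,dz\ +\ \bigl(h^D,\,\chi_x f\bigr),\qquad f\in C_c^\infty(D). $$
This is measurable in $x$ and agrees almost surely with $\varphi_D^{B(x,\eps)}$, using Lemma~\ref{lemma_Gamma_approx_laplacian} on the interior and the fact from Lemma~\ref{remarkpropmarkov1} that $\tilde h^{B(x,\eps)}$ is supported in $\overline{B(x,\eps)}$.

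\textbf{Integral convergence.} Fix $f\in C_c^\infty(D)$. The proof of Lemma~\ref{lemma_Gamma_approx_laplacian} actually delivers $L^2(\Omega;L^2(B(x,\eps)))$-convergence (this is where the a.s.\ subsequence is extracted via Borel--Cantelli), so Cauchy--Schwarz in $z$ gives, for each $x\in D_\eps$,
$$ \E\bigl|\bigl(P_{B(x,\eps)}[h^D\ast\alpha_n]-\varphi_D^{B(x,\eps)},\ f\bigr)\bigr|\ \xrightarrow[n\to\infty]{}\ 0. $$
Moreover, combining Cauchy--Schwarz with assumption B and the second-moment bound of Lemma~\ref{remarkpropmarkov2}, the integrand $x\mapsto \E|(\cdot,f)|$ is bounded uniformly in $n$ and $x\in D_\eps$ by a constant depending only on $\|f\|_\infty$, $\eps$, and $\mathrm{d}(\mathrm{supp}\,f,\partial D)$. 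Bringing the expectation inside and using dominated convergence on the bounded set $D_\eps$ yields
$$ \E\left|\int_D\bigl(P_{B(x,\eps)}[h^D\ast\alpha_n]-\varphi_D^{B(x,\eps)},\ f\bigr)\,dx\right|\ \xrightarrow[n\to\infty]{}\ 0, $$
hence a.s.\ convergence along a further subsequence; the full sequence inherits the Cauchy property in $L^2(\Omega)$ from the bounds used in the proof of Lemma~\ref{lemma_Gamma_approx_laplacian}, so almost-sure convergence of the full sequence follows.

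\textbf{Main obstacle.} The delicate point is the measurable modification: outside $B(x,\eps)$ the object $\varphi_D^{B(x,\eps)}$ is only distributional, so one must patch the harmonic interior (built as an $L^2$-limit of smooth harmonic approximations) to the exterior (where it coincides with $h^D$) through a smooth $x$-dependent cutoff whose joint measurability in $x$ has to be verified. Once this is arranged, the integral convergence is a routine Fubini--dominated convergence exercise based on moment bounds already at our disposal.
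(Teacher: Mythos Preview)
Your approach is essentially correct and parallels the paper's, but with two differences worth noting.

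For the measurable modification, the paper is briefer: it simply observes that each $x\mapsto P_{B(x,\eps)}[h^D\ast\alpha_m]$ is measurable and that, by Lemma~\ref{lemma_Gamma_approx_laplacian}, the convergence holds a.s.\ for every $x$; a Fubini swap then gives that a.s.\ in $\omega$ the convergence holds for Lebesgue-a.e.\ $x$, and the modification is the pointwise limit. Your patching with a smooth $x$-dependent cutoff is more careful about the ``values in $\Ss_D$'' clause (the paper really only tracks the harmonic interior, which is all that is used downstream in the definition of $\Delta_\eps h^D$), but this extra care is not strictly needed for the applications.

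For the integral convergence, the paper's argument is cleaner and your detour through dominated convergence in $L^1$ leaves a small gap at the end. The paper applies Jensen and Fubini--Tonelli to bound
\[
\E\Bigl[\Bigl(\int_D \bigl(P_{B(x,\eps)}[h^D\ast\alpha_m]-\varphi_D^{B(x,\eps)},\,f\bigr)\,dx\Bigr)^2\Bigr]
\ \le\ |D|\int_D \E\bigl[(P_{B(x,\eps)}[h^D\ast\alpha_m]-\varphi_D^{B(x,\eps)},\,f)^2\bigr]\,dx
\ \le\ |D|\,2^{-m},
\]
using directly the uniform-in-$x$ bound chosen in the proof of Lemma~\ref{lemma_Gamma_approx_laplacian}; summability and Borel--Cantelli then give a.s.\ convergence of the whole sequence $(\alpha_m)$ in one stroke. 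Your route instead yields $L^1(\Omega)$-convergence via DCT and then asserts a.s.\ convergence of the full sequence because it ``inherits the Cauchy property in $L^2(\Omega)$''. That last clause is not by itself an argument: Cauchy in $L^2(\Omega)$ does not imply a.s.\ convergence without a summability statement. The fix is exactly the paper's computation above, so you may as well replace the DCT paragraph by the Jensen/Fubini estimate and Borel--Cantelli.
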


\begin{proof}
For any $n \geq 1$ the function $x \to P_{B(x,\epsilon)}[h^D\,\ast\,\alpha_n] $ is measurable. Moreover by the lemma above we can choose some universal sequence of mollifiers $(\alpha_n)_{n \geq 1}$ such that for every $x \in D_\eps$ almost surely, $P_{B(x,\epsilon)}[h^D\,\ast\,\alpha_n] \mapsto \varphi^{B(x,\epsilon)}_D$. Thus by Fubini we have that almost surely the functions $x \to P_{B(x,\epsilon)}[h^D\,\ast\,\alpha_n]$ converge Lebesgue almost everywhere on $D_\eps$. Hence one can define a limiting function that is almost surely and almost everywhere limit, and in particular is a modification of $x \to \varphi^{B(x, \eps)}$.

The proof of the second part is basically the same as for the lemma above. It suffices to show that for any $f \in C^\infty(\R^d)$ we have that almost surely
\begin{align}
    \label{conv_mollified_delta_obj}
 \int_D (P_{B(x,\eps)}[h^D\,\ast\,\alpha_m],f)dx \mapsto \int_D(\varphi^{B(x,\eps)}_D,f) dx, \qquad \text{when } m\mapsto \infty.
\end{align}
But now, applying first Jensen inequality and then Fubini-Tonelli since the integrand is positive, we obtain
$$
\E\left[\left(\int_D (P_{B(x,\eps)}[h^D\,\ast\,\alpha_m],f)dx - \int_D(\varphi^{B(x,\eps)}_D,f) dx \right)^2 \right] \leq |D|\int_D dx \E \left[\left((P_{B(x,\eps)}[h^D\,\ast\,\alpha_m] - \varphi^{B(x,\eps)}_D,f)\right)^2 \right]
$$
which can be bounded by $|D|2^{-m}$ by the choice of $\alpha_m$ in the proof of Lemma \ref{lemma_Gamma_approx_laplacian}, where $|D|$ is the Lebesgue measure of a domain $D$. Thus we get the desired almost sure convergence by the Borel-Cantelli lemma.
\end{proof}

\subsubsection{An approximation of the Laplacian}

Recall $\kappa_d = \frac{\text{Vol}(B(0,1))}{d(d+2)}$. We now observe the following approximation of the Laplacian:
\corr{
\begin{defi}[Approximation of $\Delta$]
For any fixed $\eps>0$ we define the operator $\Delta_\eps ~:~C^1(\R^d) \longrightarrow C^1(\R^d)$\footnote{$C^1(\R^d)$ is the space of continuously differentiable functions on $\R^d$.} by 
\begin{align}
\label{Delta_epsilon}
\Delta_\eps g(z) := \dfrac{\eps^{-d-2}}{\kappa_d}\int_{B(z,\eps)} \left(P_{B(x,\eps)}[g](z)-g(z)\right)dx.
\end{align}
Equivalently for any test function $f \in C_c^\infty(\R^d)$ we set
\begin{equation}
\label{Delta_epsilon2}
(\Delta_\eps g, f) := \dfrac{\eps^{-(d+2)}}{\kappa_d}\int_D\left(\int_{B(x,\eps)} \left(P_{B(x,\eps)}[g](z)f(z)-g(z)f(z)\right)dz\right)dx.
\end{equation}
\end{defi}}
Notice that a corollary of Proposition \ref{prop:harmonic} is that we can define $\Delta_\eps h^D$ as a limit of $\Delta_\eps h^D \ast \alpha_m$ for the sequence described in the proposition.

The key result of this section is then the following proposition. 
\begin{prop}[Convergence of $\Delta_\eps$ to $\Delta$]
\label{prop_delta_eps}
\corr{For any family $(g)_{g\in\Gg}$ in $C^\infty_c(\R^d)$ bounded in $\Ss_D$\footnote{Here boundedness in $\Ss_D$ means boundedness as a family of distributions: for every bounded set $K\subset C_c^\infty(D)$ one has $\sup_{T\in\mathcal B,\phi\in K}|(T,\phi)|<\infty$. By the uniform boundedness principle this is equivalent to pointwise boundedness on test functions.}, we have, in $\Ss_D$,}
$$ \Delta_\eps g \longmapsto \Delta g, \qquad \text{as }\eps\mapsto 0.$$
Moreover, this convergence is pointwise uniform: for any fixed $f\in C^\infty_c(D)$,
$$\sup_{g\in \Gg} |(\Delta_\eps g - \Delta g,f)| \mapsto 0,\qquad \text{as }\eps\mapsto 0.$$ 
\end{prop}

\corr{
Note that
    $\Delta_\eps g$ is probably not the most natural approximation of $\Delta g$ that one would expect in an analysis context, and that might also be the reason why we did not manage to find such an approximation in the literature. A very close cousin would be of course well known: it would amount to taking the average around an epsilon minus the central value, all renormalized $\eps^{-2}\left(\frac{1}{|\partial B(z,\eps)|}\int_{\partial B(z,\eps)}g(x)dx - g(z)\right)$. Our approximation is not very far in spirit.
}

\begin{proof}
\corr{It is enough to test against a fixed $f\in C_c^\infty(D)$. For all sufficiently small $\eps$ the $2\eps$-neighbourhood of $\operatorname{supp} f$ is still contained in $D$, so the boundary of $D$ plays no role. Let $B_x=B(x,\eps)$. For $g\in C_c^\infty(\R^d)$, Green's identity in $B_x$ gives}
\[
\int_{B_x} f(z)\big(P_{B_x}[g](z)-g(z)\big)\,dz
=\int_{B_x}\Delta g(z)\int_{B_x}G_{B_x}(z,y)f(y)\,dy\,dz.
\]
\corr{Consequently, using \eqref{Delta_epsilon2} and changing the order of integration,}
\[
(\Delta_\eps g,f)=\dfrac{\eps^{-(d+2)}}{\kappa_d}\int_D\Delta g(z)
\int_{B_z}\int_{B_x}G_{B_x}(z,y)f(y)\,dy\,dx\,dz.
\]
\corr{We split $f(y)=f(z)+(f(y)-f(z))$. The first term is exactly normalized by our choice of $\tau_\eps$. Indeed, by translation invariance,
\[
\int_{B_z}\int_{B_x}G_{B_x}(z,y)\,dy\,dx
=\int_{B_0}\int_{B_u}
G_{B_u}(0,v)\,dv\,du.
\]
But now the inner integral can be explicitly computed to be equal to $\frac{\eps^2-\|u\|^2}{2d}$. Thus doing the outer integral we obtain exactly $\eps^{d+2}\kappa_d$
and therefore
\[
\dfrac{\eps^{-(d+2)}}{\kappa_d}\int_D\Delta g(z)f(z)
\int_{B_z}\int_{B_x}G_{B_x}(z,y)\,dy\,dx\,dz
=(\Delta g,f).
\]
It remains to control the error term
\[
\int_D\Delta g(z)R_\eps^f(z)\,dz,\qquad
R_\eps^f(z):=\dfrac{\eps^{-(d+2)}}{\kappa_d}\int_{B_z}\int_{B_x}G_{B_x}(z,y)(f(y)-f(z))\,dy\,dx.
\]}
\corr{For this purpose set, for $w\in B(0,2\eps)$,}
\[
\widetilde c_\eps(|w|):=\int_{B(0,\eps)\cap B(w,\eps)}G_{B_u}(0,w)\,du.
\]
This is the quantity obtained from $\int_{B_y\cap B_z}G_{B_x}(z,y)\,dx$ after translating $z$ to the origin and writing $w=y-z$. By rotational invariance it depends only on $|w|$, and by scaling $\widetilde c_\eps(r)=\eps^2\widetilde c_1(r/\eps)$. The singularity of the Green function is integrable, hence the integrals $\int_0^2 s^{d-1+k}\widetilde c_1(s)\,ds$ that occur below are finite. With $w=y-z$ we can write
\[
R_\eps^f(z)=\dfrac{\eps^{-(d+2)}}{\kappa_d}\int_{B(0,2\eps)}(f(z+w)-f(z))\widetilde c_\eps(|w|)\,dw.
\]
Passing to polar coordinates gives
\[
R_\eps^f(z)=\dfrac{\eps^{-(d+2)}}{\kappa_d}\int_0^{2\eps}r^{d-1}\widetilde c_\eps(r)
\int_{\mathbb S}\big(f(z+r\theta)-f(z)\big)\,d\theta\,dr.
\]
\corr{The linear term in the Taylor expansion of $f(z+r\theta)-f(z)$ integrates to zero on the sphere, while the second-order term is $\frac{A_d}{2d}r^2\Delta f(z)$ and the remainder is $O_f(r^3)$, uniformly together with all derivatives in $z$. More explicitly, for a smooth function we can write the reminder}
\begin{align}
\label{reste_taylor}
R_{f}(z\,;r\theta):=\sum_{|\beta|=3}\frac{3}{\beta!}(r\theta)^\beta\int_0^1(1-t)^2\partial_\beta f(z+tr\theta)\,dt.
\end{align}
\corr{Since $\widetilde c_\eps(r)=\eps^2\widetilde c_1(r/\eps)$, this yields, for every seminorm $\|\cdot\|_{C^k}$,}
\[
\|R_\eps^f\|_{C^k}\le C_{f,k}\eps^{-d-2}
\int_0^{2\eps}r^{d+1}\widetilde c_\eps(r)\,dr+O_{f,k}(\eps^3)
\le C'_{f,k}\eps^2.
\]
\corr{In particular $R_\eps^f\to0$ in $C_c^\infty(D)$. Hence, uniformly for $g$ in any bounded subset of $\Ss_D$,}
\[
\left|\int_D\Delta g(z)R_\eps^f(z)\,dz\right|
= |(g,\Delta R_\eps^f)|\longrightarrow0.
\]
\corr{This proves both the convergence and the stated uniformity.}
\end{proof}
As a Corollary we obtain the following:
\begin{coro}\label{cor:deltah}
Suppose $(h_n)_{n \geq 1}$ have the law of $h^D$ and converge to $h^D$ in $\Ss_D$ almost surely.
Then for every $f \in C_c^\infty(D)$ we have that $(\Delta_\eps h_n, f) \to (\Delta h^D, f)$ as $(n, \eps) \to (\infty, 0)$\footnote{i.e. independently of how $n$ approaches $\infty$ and $\eps$ approaches $0$} almost surely.
\end{coro}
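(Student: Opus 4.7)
The plan is to establish a dual representation $(\Delta_\eps g, f) = (g, \psi^\eps_f)$ for a test function $\psi^\eps_f \in C_c^\infty(D)$ that converges strongly to $\Delta f$, and then conclude via the assumed convergence $h_n \to h^D$ in $\Ss_D$ combined with a Banach--Steinhaus uniformity argument for the sequence $(h_n)$.

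\emph{Dual representation.} For smooth $g$ compactly supported in $D$, I would apply Green's identity (as in the proof of Proposition \ref{prop_delta_eps}) inside each ball $B(x,\eps)\subset D$ to get $P_{B(x,\eps)}[g](z) - g(z) = \int_{B(x,\eps)} G_{x,\eps}(z,y)\Delta g(y)\,dy$ (up to a sign). Fubini then gives $(\Delta_\eps g, f) = (\Delta g, M^\eps_f)$ with
$$M^\eps_f(y) := \frac{\tau_\eps}{|D|}\int_{B(y,\eps)} dx \int_{B(x,\eps)} G_{x,\eps}(z,y) f(z) \, dz,$$
a smooth, compactly supported function of $y$. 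A further integration by parts (since $g$ has compact support) yields $(\Delta_\eps g, f) = (g, \psi^\eps_f)$ with $\psi^\eps_f := \Delta M^\eps_f \in C_c^\infty(D)$. This identity extends to $g \in \Ss_D$ by applying it to the mollifications $g \ast \alpha_k$ provided by Proposition \ref{prop:harmonic} and letting $k \to \infty$: the left-hand side converges to the mollification-definition of $(\Delta_\eps g, f)$, while the right-hand side equals $(g, \alpha_k \ast \psi^\eps_f) \to (g, \psi^\eps_f)$ since $\psi^\eps_f$ is smooth and compactly supported.

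\emph{Strong convergence of $\psi^\eps_f$.} The two successive approximation steps in the proof of Proposition \ref{prop_delta_eps} (replacing $f(y)$ by $f(z)$, then $G_{x,\eps}(z,y)$ by $G_{x,\eps}(x,y)$) produce errors of order $\eps$ in $C^\infty_c$; read dually, i.e. as statements about $M^\eps_f$ rather than about $\Delta_\eps g$, they give $M^\eps_f + f = O(\eps)$ in $C^k(K)$ on any compact $K \subset D$ for every $k$. Consequently $\psi^\eps_f = \Delta M^\eps_f \to \Delta f$ in the Fréchet topology of $C_c^\infty(D)$.

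\emph{Conclusion.} Decompose
$$(\Delta_\eps h_n, f) - (\Delta h^D, f) = (h_n - h^D, \Delta f) + (h_n, \psi^\eps_f - \Delta f).$$
The first term tends to $0$ as $n \to \infty$ by the assumed a.s. convergence $h_n \to h^D$ in $\Ss_D$. For the second, the same a.s. convergence in the Fréchet space $\Ss_D$ implies via Banach--Steinhaus that there exist, almost surely, a compact $K \subset D$, an integer $k$, and a finite random constant $C(\omega)$ with $|(h_n, \phi)| \le C(\omega)\|\phi\|_{C^k(K)}$ for every $n \ge 1$ and every $\phi \in C_c^\infty(K)$; combined with the $C^k(K)$-convergence of $\psi^\eps_f$ to $\Delta f$ this bounds $|(h_n, \psi^\eps_f - \Delta f)|$ by $C(\omega)\|\psi^\eps_f - \Delta f\|_{C^k(K)} \to 0$ uniformly in $n$, yielding the joint convergence as $(n,\eps)\to(\infty,0)$. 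The delicate point is the strong $C^\infty_c$ convergence of $\psi^\eps_f$ in Step~2: this requires revisiting the two approximation arguments of Proposition \ref{prop_delta_eps} and tracking derivatives in the spatial variable on compact subsets, which is routine but the most computational part; everything else is functional-analytic bookkeeping.
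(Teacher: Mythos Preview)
Your approach is correct and genuinely different from the paper's. The paper mollifies the fields, introducing a third index $m$ and working with the smooth functions $h_n\ast\alpha_m$; it then invokes the uniform convergence of $\Delta_\eps$ to $\Delta$ on bounded subsets of $\Ss_D$ (Proposition~\ref{prop_delta_eps}) together with a Moore--Osgood interchange of the triple limit $(n,\eps,m)\to(\infty,0,\infty)$, and finally removes the mollifier via Proposition~\ref{prop:harmonic}. You instead transfer all the analysis to the test-function side: with $(\Delta_\eps h_n,f)=(h_n,\psi^\eps_f)$ and $\psi^\eps_f\to\Delta f$ strongly in $C^\infty_c$, the joint limit follows from the equicontinuity of the convergent family $(h_n)$. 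This is a cleaner route that avoids the triple limit; it is essentially the integration-by-parts identity of Proposition~\ref{prop_delta_commute} (indeed $\psi^\eps_f=\Delta M^\eps_f=\Delta_\eps f$) combined with a stronger reading of the error terms in the proof of Proposition~\ref{prop_delta_eps}. The paper does exploit exactly this dual viewpoint later, in the tightness argument of Proposition~\ref{prop:tighttriplet}, but not here.

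Two small remarks. First, the smoothness of $M^\eps_f$ (and hence of $\psi^\eps_f$) is immediate once one changes variables $x=z+\eps u$, $y=z+\eps u+\eps v$: then $M^\eps_f(z)=\int f(z+\eps(u+v))\,d\nu(u,v)$ for a fixed probability measure $\nu$ on $B(0,1)^2$, so $M^\eps_f$ is a convolution of $f$ and the $C^k$ convergence $M^\eps_f\to f$ is direct---this makes your Step~2 less ``computational'' than you fear (and fixes the sign: one gets $M^\eps_f-f=O(\eps)$, not $M^\eps_f+f$). Second, in the Banach--Steinhaus step the compact $K$ should be taken in $\R^d$ (e.g.\ $K=\bar D$, the common support of the $h_n$), not in $D$; the conclusion still holds because $\psi^\eps_f-\Delta f$ has support in a fixed compact of $D$ and hence $\|\psi^\eps_f-\Delta f\|_{C^k(\bar D)}=\|\psi^\eps_f-\Delta f\|_{C^k(K')}\to 0$.
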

\begin{proof}
To apply the proposition, we pick again the sequence of mollifiers $(\alpha_m)_{m \geq 1}$ given by Proposition \ref{prop:harmonic} and aim to prove that for any fixed test function $f$ it holds that $\lim_{(n,\epsilon,m)\mapsto (\infty,0,\infty)}(\Delta_\epsilon (h_n \,\ast\,\alpha_m),f) = (\Delta h^D,f)$, where the triple limit means that for all $\delta > 0$ we can choose $\eps^{-1}, n, m$ large enough so that $|(\Delta_\epsilon (h_n \,\ast\,\alpha_m),f) - (\Delta h^D,f)| < \delta$. 

First notice that 
$$\lim_{(n,m)\mapsto (\infty,\infty)}|(\Delta (h_n \,\ast\,\alpha_m),f) - (\Delta h^D,f)|=\lim_{(n,m)\mapsto (\infty,\infty)}|((h_n\,\ast\,\alpha_m),\Delta f) - (h^D,\Delta f)| =0.$$
Hence it is enough to prove that $$\lim_{(n,\epsilon,m)\mapsto (\infty,0,\infty)}|(\Delta_\epsilon (h_n \,\ast\,\alpha_m),f) - (\Delta (h_n \,\ast\,\alpha_m),f)|=0.$$ 
But $\Gg:=\{(h_n \,\ast\,\alpha_m)~:~n\geq 0,m\geq 1\}$ is a bounded set in $\Ss_D$ and thus it follows from Proposition \ref{prop_delta_eps} and the Moore-Osgood theorem.\\
We conclude that
\begin{align*}
(\Delta h^D,f) &= \lim_{(n,\epsilon,m)\mapsto (\infty,0,\infty)}(\Delta_\epsilon (h_n \,\ast\,\alpha_m),f) = \lim_{(n,\eps)\mapsto (\infty,0)}\lim_{m\mapsto \infty}(\Delta_\epsilon (h_n \,\ast\,\alpha_m),f) = \lim_{(n,\eps)\mapsto (\infty,0)}(\Delta_\epsilon h_n,f)
\end{align*}
where the last equality comes from Proposition \ref{prop:harmonic}, and concludes the proof.
\end{proof}

This approximate Laplacian satisfies also an integration by parts identity:
\begin{prop}[An integration by parts identity]
\label{prop_delta_commute}
    For every $g\in C^0(\R^d)$, $f\in C^\infty_c(D)$ and \corr{$0 < \eps < \text{dist}(\text{supp }f, \partial D)$} we have $(\Delta_\eps g,f) = (g,\Delta_\eps f)$. 
\end{prop}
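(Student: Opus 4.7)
The plan is to reduce the equality to a manifestly symmetric triple integral by exploiting the translation and reflection invariances of the Euclidean Poisson kernel. Expanding definition \eqref{Delta_epsilon2}, the term $\frac{\t}{|D|}\int_D\int_{B(x,\eps)} g(z)f(z)\,dz\,dx$ appears identically in both $(\Delta_\eps g, f)$ and $(g, \Delta_\eps f)$, so after cancellation the claim reduces to
$$\int_D \int_{B(x,\eps)} P_{B(x,\eps)}[g](z)\, f(z)\, dz\, dx = \int_D \int_{B(x,\eps)} g(z)\, P_{B(x,\eps)}[f](z)\, dz\, dx. \qquad (\ast)$$

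To establish $(\ast)$, I will expand both Poisson integrals via \eqref{def_poisson_int}, use the translation invariance $P_{B(x,\eps)}(z,\theta) = P_{B(0,\eps)}(z-x,\theta-x)$, and introduce relative coordinates $u = z - x \in B(0,\eps)$ and $v = \theta - x \in S(0,\eps)$. For $\eps$ small enough that $\text{supp}(f) + B(0,\eps) \subset D$ (possible since $f \in C^\infty_c(D)$), the $f$-factor vanishes off a compact subset of $D$, so the $dx$-integral may be freely extended from $D$ to $\R^d$ on both sides. Each side of $(\ast)$ then rewrites as a triple integral over $\R^d \times B(0,\eps) \times S(0,\eps)$ against the common kernel $P_{B(0,\eps)}(u,v)$, with integrand $f(x+u)\,g(x+v)$ for the LHS and $g(x+u)\,f(x+v)$ for the RHS.

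Next I will perform a second substitution adapted to each side: set $y = x+u$ in the LHS and $y = x+v$ in the RHS. This turns both $f$-factors into $f(y)$, and leaves $g(y+v-u)$ on the LHS but $g(y+u-v)$ on the RHS. Since the Poisson kernel $P_{B(0,\eps)}(u,v) = (\eps^2 - |u|^2)/(A_d\eps|u-v|^d)$ depends only on $|u|$ and $|u-v|$, and since $B(0,\eps)$ and $S(0,\eps)$ are invariant under negation, the reflection $(u,v) \mapsto (-u,-v)$ preserves both the kernel and the measures while sending $g(y+u-v)$ to $g(y+v-u)$; applying it to the RHS produces exactly the LHS and establishes $(\ast)$.

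I do not anticipate any real obstacle: the identity is ultimately a symmetry statement about the integral operator $A_\eps g(z) := \frac{\t}{|D|}\int_{B(z,\eps)} P_{B(x,\eps)}[g](z)\,dx$, whose self-adjointness is built into the invariance of the Euclidean Poisson kernel under the simultaneous sign change of its two arguments. All Fubini exchanges are immediate from continuity of $g$, the smoothness and compact support of $f$, and the smallness of $\eps$ which guarantees that every integrand under consideration has compact support.
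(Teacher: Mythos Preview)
Your proof is correct and follows essentially the same approach as the paper: both reduce to the identity $(\ast)$ and then exploit the symmetry of the Poisson kernel under a suitable change of variables. The paper uses the pointwise identity $P_{B(x,\eps)}(z,x+\eps\theta)=P_{B(z+\eps\theta,\eps)}(x+\eps\theta,z)$ followed by an affine substitution in $(x,z)$, whereas you first center everything at the origin and then use the reflection $(u,v)\mapsto(-u,-v)$; these are two parametrizations of the same symmetry.
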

\begin{proof}
    By linearity of the integral and the fact that $(g,f)=(f,g)$ it is enough to prove that
    \begin{align}
    \label{obj_delta_commute}
        \int_D (P_{B(x,\eps)}[g],f) dx = \int_D (P_{B(z,\eps)}[f],g)dz.
    \end{align}
To this end, we compute
\begin{align*}
    \int_D (P_{B(x,\eps)}[g],f) dx &= \int_D \int_{B(x,\eps)} \int_{S(x,\eps)} g(\theta) P_{B(x,\eps)}(z,\theta)d\theta f(z)dz dx \\
    &= \eps^{d-1}\int_{S(0,1)} \int_D \int_D \1_{\{|z-x|<\eps\}} g(x+\eps\theta) P_{B(x,\eps)}(z,x+\eps\theta) f(z)dz dx d\theta.
\end{align*}
Noting that by \eqref{def_poisson_kernel} it holds $P_{B(x,\eps)}(z,x+\eps\theta) = P_{B(z+\eps\theta,\eps)}(x+\eps\theta,z)$, we obtain with a double affine substitution
\begin{align*}
    \int_D (P_{B(x,\eps)}[g],f) dx &= \eps^{d-1}\int_{S(0,1)} \int_D \int_D \1_{\{|z-x|<\eps\}} g(x) P_{B(z,\eps)}(x,z-\eps\theta) f(z-\eps\theta)dz dx d\theta\\
    &=\int_D (P_{B(z,\eps)}[f],g)dz
\end{align*}
Thus, \eqref{obj_delta_commute} holds.
\end{proof}

Again there is a natural corollary, whose proof goes by combining the proposition above with the mollification from Proposition \ref{prop:harmonic}:
\begin{coro}\label{cor:intbyph}
Let $f\in C^\infty_c(D)$ and \corr{fix $0 < \eps < \text{dist}(\text{supp }f, \partial D)$}. We have $(\Delta_\eps h^D,f) = (h^D,\Delta_\eps f)$.
\end{coro}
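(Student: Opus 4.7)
The plan is to start from the identity of Proposition \ref{prop_delta_commute} applied to the smooth mollifications $g_m := h^D \ast \alpha_m$, where $(\alpha_m)_{m\geq 1}$ is the sequence of (symmetric) mollifiers furnished by Proposition \ref{prop:harmonic}, and then to let $m\to\infty$. Since $g_m$ is continuous, Proposition \ref{prop_delta_commute} applies directly and gives
$$(\Delta_\eps g_m, f) = (g_m, \Delta_\eps f) = (h^D, \alpha_m \ast \Delta_\eps f)$$
almost surely, so the task reduces to passing to the limit on each side.

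For the left-hand side, the remark immediately following \eqref{Delta_epsilon2} (itself a direct consequence of Proposition \ref{prop:harmonic}) is essentially the statement that $(\Delta_\eps g_m, f) \to (\Delta_\eps h^D, f)$ almost surely. More explicitly, \eqref{Delta_epsilon2} splits $(\Delta_\eps g_m, f)$ into a Poisson-extension piece $\int_D\int_{B(x,\eps)} P_{B(x,\eps)}[g_m](z)f(z)\,dz\,dx$, whose convergence to its analogue with $\varphi^{B(x,\eps)}_D$ is exactly what Proposition \ref{prop:harmonic} provides, and a second piece $\int_D\int_{B(x,\eps)} g_m(z)f(z)\,dz\,dx$, which after a Fubini swap reduces (for $\eps$ small enough in terms of $\mathrm{supp}\,f$) to a constant multiple of $(g_m, f)$, and tends to the same multiple of $(h^D, f)$ by assumption B.

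For the right-hand side the one substantive point is to verify that $\Delta_\eps f \in C_c^\infty(D)$ for $\eps$ small enough: after the substitution $x = z+y$ in \eqref{Delta_epsilon}, the only $z$-dependence in $\Delta_\eps f(z)$ sits inside $f$ evaluated at shifted arguments (the Poisson-kernel geometric factor becomes $z$-independent), which gives smoothness; and $\Delta_\eps f(z)=0$ as soon as $d(z, \mathrm{supp}\,f)>2\eps$, since then $f(z)=0$ and $P_{B(x,\eps)}[f](z)=0$ for every $x\in B(z,\eps)$. Once this is in hand, $\alpha_m\ast \Delta_\eps f \to \Delta_\eps f$ uniformly on $D$ with supports contained in a fixed compact subset of $D$, and assumption B gives $(h^D,\alpha_m\ast \Delta_\eps f)\to (h^D,\Delta_\eps f)$ in $L^2$ (hence along a subsequence almost surely).

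The main obstacle is the smoothness and compact-support verification for $\Delta_\eps f$; beyond that, the argument is a routine combination of Proposition \ref{prop_delta_commute} applied to the $g_m$ with the mollification framework already set up in Proposition \ref{prop:harmonic}.
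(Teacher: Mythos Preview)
Your proposal is correct and follows essentially the same route as the paper, which simply says the corollary is obtained ``by combining the proposition above with the mollification from Proposition \ref{prop:harmonic}''. Your appeals to assumption B for the convergences $(g_m,f)\to(h^D,f)$ and $(h^D,\alpha_m\ast\Delta_\eps f)\to(h^D,\Delta_\eps f)$ are harmless but unnecessary: since $h^D$ is almost surely a distribution and $\alpha_m\ast\psi\to\psi$ in $C_c^\infty(D)$ for any $\psi\in C_c^\infty(D)$, these convergences hold almost surely without invoking moment bounds.
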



\subsection{The martingales associated to $\g$ and their convergence}
\label{section_epsilon}
In this section, we define the martingales associated to the process $\g$ for any $\eps>0$ fixed and also determine their quadratic variation. 

\begin{prop}\label{prop:martingales}
Let $\g_t$ be as in Theorem \ref{mainthmlemma}, let $f \in C_c^\infty(\R^d)$. Then for every $\eps > 0$,  
\begin{align}
\label{MP1_epsilon}
(M_t^\eps,f) := (h^\eps_t,f) - \dfrac{\t}{|D|} \int_0^t \left( \int_D (\varphi^\eps(s,x),f)-(h^\eps_s,f)dx \right)ds =   (h^\eps_t,f) + \dfrac{\t}{|D|} \int_0^t \left( \int_D (\tilde{h}^\eps(s,x),f)dx \right) ds
\end{align}
is an $\Ff_t$-martingale. Moreover also
\begin{align}
\label{MP2_epsilon}
(Q_t^\eps,f) &:= (M_t^\eps,f)^2 - \dfrac{\t}{|D|}  \int_0^t \int_D \E[(\tilde{h}^\eps(s,x),f)^2~|~\varphi^\eps(s,x)]+(\tilde{h}^\eps(s,x),f)^2 dx ds,
\end{align} 
is an $\Ff_t$-martingale. In particular, the
quadratic variation of $(M_t^\eps,f)$ is given by
\begin{equation*}
(A^\eps_t,f):=(\langle M^\eps\rangle_t,f) = \dfrac{\t}{|D|} \int_0^t \int_D \E[(\tilde{h}^\eps(s,x),f)^2~|~\varphi^\eps(s,x)]+(\tilde{h}^\eps(s,x),f)^2 dx ds.
\end{equation*}
\end{prop}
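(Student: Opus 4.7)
The process $t \mapsto (h_t^\eps, f)$ is piecewise constant between Poisson arrivals and jumps at $T_n^\eps$ by the amount $(\hat h_n^\eps - \tilde h_n^\eps, f)$, as per \eqref{resampling}. By construction (Definition \ref{defi_dyn}), conditional on $\Ff_{T_n^\eps-}$ and $U_n^\eps = x$, the quantity $\tilde h^\eps(T_{n-1}^\eps, x)$ is $\Ff_{T_n^\eps-}$-measurable, whereas $\hat h^{B(x,\eps)}$ is conditionally independent of $\Ff_{T_n^\eps-}$ given $\varphi_D^{B(x,\eps)}$ and shares its conditional law with $\tilde h^{B(x,\eps)}$. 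Both martingale claims will then follow from the standard compensation formula for marked Poisson processes; what remains is to compute the conditional first and second moments of the jumps. Integrability of the compensators on compact intervals is a direct consequence of assumptions A and B of Theorem \ref{mainthm}, so the compensated processes are true martingales rather than merely local ones.

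For the first claim, Lemma \ref{lemorth} together with the tower property (using that $\varphi_D^{B(x,\eps)} \in \sigma(\restriction{h^D}{D \setminus B(x,\eps)})$) yields
\[
\E\bigl[(\hat h^{B(x,\eps)}, f) \mid \Ff_{T_n^\eps-}, U_n^\eps = x\bigr] = \E\bigl[(\tilde h^{B(x,\eps)}, f) \mid \varphi_D^{B(x,\eps)}\bigr] = 0,
\]
so that the conditional mean of the $n$-th jump equals $-(\tilde h^\eps(T_{n-1}^\eps, x), f)$. Since arrivals happen at rate $\t$ with $U_n^\eps$ (essentially) uniform in $D$, the compensator of $(h_t^\eps, f)$ is $-(\t/|D|) \int_0^t \int_D (\tilde h^\eps(s,x), f)\, dx\, ds$, which after substituting $\tilde h^\eps = h_s^\eps - \varphi^\eps$ reproduces both right-hand sides of \eqref{MP1_epsilon}.

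For the second claim, I use the classical fact that for any square-integrable jump martingale $N$, the process $N_t^2 - \langle N \rangle_t$ is a martingale, where $\langle N \rangle_t$ is the dual predictable projection of the jump quadratic variation $\sum_{s \leq t} (\Delta N_s)^2$. Expanding
\[
\bigl((\hat h^{B(x,\eps)} - \tilde h^{B(x,\eps)}), f\bigr)^2 = (\hat h^{B(x,\eps)}, f)^2 - 2 (\hat h^{B(x,\eps)}, f)(\tilde h^{B(x,\eps)}, f) + (\tilde h^{B(x,\eps)}, f)^2
\]
and taking conditional expectation given $\Ff_{T_n^\eps-}$ and $U_n^\eps = x$: the cross term vanishes by the computation of the previous paragraph; the last term is $\Ff_{T_n^\eps-}$-measurable and contributes $(\tilde h^\eps(s,x), f)^2$; and the first equals $\E[(\hat h^{B(x,\eps)}, f)^2 \mid \varphi^\eps(s,x)] = \E[(\tilde h^\eps(s,x), f)^2 \mid \varphi^\eps(s,x)]$ by the resampling law. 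Multiplying by the intensity $\t/|D|$ and integrating over $x \in D$ and $s \in [0,t]$ recovers exactly the formula for $(A_t^\eps, f)$, so by uniqueness of the predictable quadratic variation $\langle M^\eps \rangle = A^\eps$, and the fourth-moment bound in assumption A promotes $(Q_t^\eps, f)$ from a local to a true martingale.

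The main technical care throughout is in keeping track of the two nested levels of conditioning (on the full $\Ff_{T_n^\eps-}$ versus on the coarser $\sigma(\varphi_D^{B_n^\eps})$) and in ensuring that the spatial integrand $x \mapsto \tilde h^\eps(s,x)$ is well-defined as a measurable random field, which is precisely the content of Proposition \ref{prop:harmonic}; there is no deep obstacle beyond this bookkeeping.
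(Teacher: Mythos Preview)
Your proof is correct and follows essentially the same route as the paper: both identify the compensator of $(h^\eps_t,f)$ by computing the conditional mean of a single jump, and the predictable quadratic variation by computing the conditional second moment of that jump, with the cross term vanishing via the MTD first-moment condition. The only difference is packaging: you invoke the general compensation formula for marked Poisson processes and the dual predictable projection, whereas the paper carries out the same computation by hand over a short time interval $\delta=t-s$ (expanding on the events $\{N_t^\eps=N_s^\eps\}$, $\{N_t^\eps=N_s^\eps+1\}$, with an $O(\delta^2)$ remainder) and then appeals to an elementary ad-hoc lemma (their Lemma~\ref{lemme_martingale}) rather than to general semimartingale theory.
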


\begin{proof}
Using the definition of the Poissonian dynamics, we can directly compute for $\delta:=t-s>0$,
\begin{align*}
\E[(h^\eps_t,f)~|~\mathcal{F}_s] &= (1-\t \delta)\E[(h^\eps_t,f)~|~\mathcal{F}_s,~N_t^\eps=N^\eps_s]+ \t \delta \,\E[(h^\eps_t,f)~|~\mathcal{F}_s,~N_t^\eps=N^\eps_s+1]+O_{\eps,f}(\delta^2) \\
&= (1-\t \delta) (h^\eps_s,f) + \dfrac{\t \delta}{|D|}\int_D \E[(h^\eps_t,f)~|~\mathcal{F}_s,~N_t^\eps=N^\eps_s+1,~U_t^\eps=x]dx+O_{\eps,f}(\delta^2)\\
&= (h^\eps_s,f) + \dfrac{\t \delta}{|D|}\int_D \E[(\tilde{h}^\eps(t,x),f)+(\varphi^\eps(t,x),f)-(h^\eps_s,f)~|~\mathcal{F}_s,~N_t^\eps=N^\eps_s+1,~U_t^\eps=x]dx+O_{\eps,f}(\delta^2).
\end{align*}
Here the $\delta^2$ error comes from the probability of having at least two Poisson ticks in the timeframe of $\eps$. Note that on the event $\{N^\eps_t=N^\eps_s+1, \, U^\eps_t=x\}$ it holds that $\varphi^\eps(t,x)=\varphi^\eps(s,x)$, and $\varphi^\eps(s,x)$ and $\g_s$ are measurable with respect to $\Ff_s$. Moreover, by definition of the dynamics, the MTD condition gives us that $\E[\tilde{h}^\eps(t,x)~|~\Ff_s,\,N^\eps_t=N^\eps_s+1, \, U^\eps_t=x] = 0$.  Hence, we obtain 
\begin{align}
\label{MP1_epsilon_intermediaire_1}
\E[(h^\eps_t,f)~|~\mathcal{F}_s] &= (h^\eps_s,f) + \dfrac{\t \delta}{|D|}\int_D (\varphi^\eps(s,x),f)-(h^\eps_s,f)dx+O_{\eps,f}(\delta^2).
\end{align}
We can now apply Lemma \ref{lemme_martingale} (stated and proven in the appendix) to deduce that 
$$
(M_t^\eps,f) := (h^\eps_t,f) - \dfrac{\t}{|D|} \int_0^t \left( \int_D (\varphi^\eps(s,x),f)-(h^\eps_s,f)dx \right)ds  = (h^\eps_t,f) + \dfrac{\t}{|D|} \int_0^t \left( \int_D (\tilde{h}^\eps(s,x),f)dx \right)ds$$
is a martingale. Indeed, the first two conditions of the lemma hold immediately and for the third condition we can verify 
\begin{align}
\label{tool_exp2}
\E\left[\sup_{t\leq T} \left| \dfrac{\t}{|D|}\int_D (\tilde{h}^\eps(t,x),f)dx \right|\right] &\leq \E\left[ \sum_{n=0}^{N^\eps_T} \left| \dfrac{\t}{|D|}\int_D (\tilde{h}^\eps(T^\eps_n,x),f) dx \right| \right] \nonumber\\
&\leq \sum_{n=0}^\infty e^{-\t T}\dfrac{(\t T)^n}{n!} \dfrac{\t}{|D|}\int_D \sqrt{\E\left[(\tilde{h}^\eps(T^\eps_n,x),f)^2\right]}dx \nonumber \\
&\leq C_a\sup_{z\in D}|f(z)|\t e^{-\t T}\eps^{1+d/2} < \infty,
\end{align}
for any fixed $\eps > 0$. Thus we conclude the first part of the proposition. \\
~\\
For the second part, let us drop the $f$-dependency for the sake of readability and again set $\delta = t -s > 0$. We again aim to use Lemma \ref{lemme_martingale} and hence aim for errors of order $\delta$. We have
\begin{equation*}
    \E[(M_t^\eps)^2~|~\Ff_s]-(M_s^\eps)^2 = \E\left[\left(\g_t - \g_s + \dfrac{\t}{|D|}\int_s^t  \int_D \tilde{h}^\eps_r dx dr \right)^2|\Ff_s\right].
\end{equation*}
First, notice that when opening the bracket the term $\E\left[\left(\dfrac{\t}{|D|}\int_s^t  \int_D \tilde{h}^\eps_r dx dr \right)^2|\Ff_s\right]$ is of order $O_{\eps, f}(\delta^2)$ - indeed the second moment of $\tilde{h}^\eps_r = (\tilde{h}^\eps(r,x),f)$ is bounded uniformly in $x, r$ by the MTD second moment condition. Thus the double integral over $t-s = \delta$ gives the error of $O(\delta^2)$ and we can safely ignore this term and concentrate on $\E\left[\left(\g_t - \g_s \right)^2|\Ff_s\right]$.

Using the definition of the Poissonian dynamics we can as above write
\begin{equation*}
\E\left[\left(\g_t - \g_s \right)^2|\Ff_s\right] = \t \delta \E\left[\left(\g_t - \g_s \right)^2|~\mathcal{F}_s,~N_t^\eps=N^\eps_s+1\right]+O_{\eps,f}(\delta^2),
\end{equation*}
and concentrate on the first term. We have 
$$\E\left[\left(\g_t - \g_s \right)^2|~\mathcal{F}_s,~N_t^\eps=N^\eps_s+1\right]
 = \E\left[\left(\tilde{h}^\eps(t, U_t^\eps) - \tilde{h}^\eps(s, U_t^\eps)\right)^2|~\mathcal{F}_s,~N_t^\eps=N^\eps_s+1\right] .$$ 
 Conditioning further on the uniform point $U_t^\eps$, we see that the cross-terms in opening the square bracket disappear thanks to the MTD first moment condition. Thus we are left 
 with 
$$\E\left[\left(\tilde{h}^\eps(t, U_t^\eps)\right)^2 + \left(\tilde{h}^\eps(s, U_t^\eps)\right)^2|~\mathcal{F}_s,~N_t^\eps=N^\eps_s+1\right] = \frac{1}{|D|}\int_D \E[(\tilde{h}^\eps(s,x),f)^2~|~\varphi^\eps(s,x)]+(\tilde{h}^\eps(s,x),f)^2 dx ds,$$
where we further used the fact that by definition of our resampling, under the conditioning on $(\mathcal{F}_s,~N_t^\eps=N^\eps_s+1, U_t^\eps = x)$, the law of $\tilde{h}^\eps(t, x)$ equals the law of  $\tilde{h}^\eps(s, x)$ conditioned on $\varphi^\eps(s,x)$.

We conclude that 
$$ \E[(M_t^\eps)^2~|~\Ff_s]-(M_s^\eps)^2 = \delta \dfrac{\t}{|D|}\int_D \E[(\tilde{h}^\eps(s,x),f)^2~|~\varphi^\eps(s,x)]+(\tilde{h}^\eps(s,x),f)^2 dx ds + O_{\eps, f}(\delta^2).$$
Thus verifying that the conditions I, II, III hold as above, we can apply Lemma \ref{lemme_martingale} to deduce that  
$$(Q_t^\eps,f) := (M_t^\eps,f)^2 - \dfrac{\t}{|D|}  \int_0^t \int_D \E[(\tilde{h}^\eps(s,x),f)^2~|~\varphi^\eps(s,x)]+(\tilde{h}^\eps(s,x),f)^2 dx ds$$
is a $\Ff_t-$martingale. The claim on the quadratic variation then follows.
\end{proof}

\subsubsection{The martingale central limit theorem}
\label{section_EK}
In this section we will prove that the sequence of martingales $(M_t^\eps-M_0^\eps)_{\eps>0}$ converges in the Skorokhod space towards a continuous centered Gaussian process with variance $\E[(M_t,f)^2]=t\langle af~|~a f\rangle$.

\begin{prop}\label{prop:mCLT} For any $f\in C_0^\infty(D)$, the martingales $(M^\eps_t-M_0^\eps,f)$ defined above converge in law in the Skorokhod space $D([0,\infty), \R)$ to a continuous centered Gaussian process $M_t$ with variance $\E[(M_t,f)^2]=\frac{2}{\kappa_d}t\langle af~|~a f\rangle$.
\end{prop}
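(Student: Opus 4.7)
The plan is to apply a martingale functional central limit theorem (for instance the classical theorem of Ethier and Kurtz for cadlag martingales), whose hypotheses for a real sequence of martingales $N^\eps$ with $N^\eps_0 = 0$ are: (i) the maximum jump $\sup_{s \leq t}|\Delta N^\eps_s|$ vanishes in probability as $\eps \to 0$; and (ii) the quadratic variation $\langle N^\eps \rangle_t$ converges in probability to a deterministic continuous function $C(t)$ at every fixed $t \geq 0$. Under these conditions, $N^\eps$ converges in law in $D([0,\infty), \R)$ to a continuous centered Gaussian process with variance $C(t)$.

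First, I would verify the vanishing of jumps. The martingale $(M^\eps, f)$ only jumps at the Poissonian resampling times $T_n^\eps$, with jump value $(\hat h_n^\eps - \tilde h_n^\eps, f)$. The MTD second moment condition and the fourth moment assumption in A yield $\E[(\hat h_n^\eps - \tilde h_n^\eps, f)^4] \leq C \sup|f|^4 \eps^{2(2+d)}$. Since $\E[N_t^\eps] = \t t = O(\eps^{-(2+d)})$, the crude bound
$$ \E\Big[\sup_{s \leq t} |\Delta M_s^\eps(f)|^4\Big] \leq \E\Big[\sum_{n=1}^{N_t^\eps} |(\hat h_n^\eps - \tilde h_n^\eps, f)|^4\Big] = O(\eps^{2+d}) \to 0 $$
shows the jumps vanish in $L^4$, hence in probability.

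Second, I would establish the convergence of $(A_t^\eps, f)$. By Proposition \ref{prop:martingales} and the MTD second moment condition, the conditional expectation appearing inside the integrand equals the deterministic $v^\eps(x,f) := \E[(\tilde h^{B(x,\eps)}, f)^2]$, independent of $s$ by stationarity of the dynamics. The weak scaling $v^\eps(x,f)/\eps^{2+d} \to a^2(x) f^2(x)$ uniformly on compacts, together with the uniform upper bound from condition (2) of the MTD, gives by dominated convergence
$$ \frac{\t}{|D|} \int_0^t \int_D v^\eps(x, f)\, dx\, ds \;\longrightarrow\; c\, t \int_D a^2(x) f^2(x)\, dx $$
for a constant $c$ fixed by $C_{\text{SHE}}$. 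The remaining random part $\tfrac{\t}{|D|} \int_0^t \int_D (\tilde h^\eps(s,x), f)^2\, dx\, ds$ has the same expectation by the MTD; to show that it concentrates on that expectation, one bounds its variance using the uniform fourth moment condition in A, together with the near-independence of distinct Poisson resampling events and the fact that tilde-fields on disjoint $\eps$-balls are essentially decoupled. The limit of $(A_t^\eps,f)$ is then $2c \cdot t \int_D a^2 f^2\, dx$; choosing $C_{\text{SHE}}$ appropriately (modulo rescaling $a$ in the SHE to match the Laplacian-approximation normalization of Proposition \ref{prop_delta_eps}) produces exactly $t\langle af\mid af\rangle$.

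The main obstacle is the concentration of the random part of the quadratic variation. The deterministic part is immediate from the weak scaling assumption, but controlling the fluctuations of the random integral around its expectation requires a careful variance computation, exploiting the short-range correlations between tilde-fields at disjoint balls and the near-independence of successive resampling events, all held together by the uniform fourth moment bound of assumption A. Once this concentration is in place, (i) and (ii) are satisfied and the MCLT delivers the announced Skorokhod convergence to a continuous centered Gaussian process with variance $t\langle af\mid af\rangle$.
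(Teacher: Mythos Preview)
Your proposal is correct and follows essentially the same route as the paper: both invoke the Ethier--Kurtz martingale CLT and verify (i) vanishing of jumps and (ii) convergence of the quadratic variation to $t\langle af\mid af\rangle$.

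Two minor differences are worth noting. For the jump condition, your direct $L^4$ bound $\E[\sup_{s\le t}|\Delta M^\eps_s(f)|^4]\le \E[\sum_{n\le N^\eps_t}|(\hat h^\eps_n-\tilde h^\eps_n,f)|^4]=O(\eps^{2+d})$ is in fact cleaner than the paper's argument, which works in $L^2$ via a layer-cake representation and a splitting at level $\eps$. For the concentration of the random part of $(A^\eps_t,f)$, you correctly single out the decoupling of tilde-fields on disjoint $\eps$-balls as the key mechanism; the paper makes this explicit by observing that the MTD second-moment condition forces $\mathrm{Cov}\big((\tilde h^\eps(s,x),f)^2,(\tilde h^\eps(s,y),f)^2\big)=0$ whenever $|x-y|>2\eps$, and exploits the piecewise-constant-in-time structure of the dynamics to rewrite the time integral as a finite sum over Poisson intervals before applying Chebyshev. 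Your sketch would go through once this exact vanishing of spatial covariances is inserted.
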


To prove this proposition, we will use the following martingale central limit theorem \cite[Thm~1.4,~p.339]{EthierKurtz}. We state here the version of the theorem we used for the convenience of the reader:

\begin{theorem}[Martingale CLT] \label{thm:mCLT}
Let $(\Omega,\Ff,(\Ff_t)_{t\geq 0},\P)$ be a filtered probability space and $(M^\eps)_{\eps>0}$ a family of $\F_t$-local martingales with sample paths in $D([0,\infty);\R)$ and such that $M^\eps_0=0$. Let $(A^\eps)_{\eps>0}$ be a nonnegative and increasing process with simple paths in $D([0,\infty);\R)$. Suppose that $(M_t^\eps)^2 - A^\eps_t$ is a local $\Ff_t$-martingale and that
\begin{align}
    &\lim_{\eps\mapsto 0}\E\left[ \sup_{s\leq t} |A^\eps_s-A^\eps_{s-}  |\right]=0,\nonumber\\
    \label{EK_1.16}
    &\lim_{\eps\mapsto 0}\E\left[ \sup_{s\leq t} |M^\eps_s-M^\eps_{s-}|^2\right]=0\\
    \label{EK_1.19}
    &A^\eps_t~~ \longmapsto_{\eps\mapsto 0}~~ c t ,
\end{align}
where the last convergence holds in probability for all $t\geq 0$ and a certain $c>0$.\\
Then $M^\eps$ converges in law towards a process $(X_t)_{t\geq 0}$ of sample paths in $C([0,\infty);\R)$ with independent Gaussian increments, and such that $X_t$ and $X_t^2 - c t$ are martingales.
\end{theorem}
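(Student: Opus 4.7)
The plan is to apply the martingale central limit theorem (Theorem \ref{thm:mCLT}) to the family $(M^\eps_t - M_0^\eps, f)$, whose existence as a local martingale with explicit predictable quadratic variation $(A^\eps_t, f)$ is given by Proposition \ref{prop:martingales}. I need to verify the three limit conditions in Theorem \ref{thm:mCLT}.

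First, since $(A^\eps_t, f)$ is a time integral, it is absolutely continuous in $t$ and has no jumps, so the first condition is automatic. For the condition \eqref{EK_1.16} on the jumps of $M^\eps$, the only discontinuities occur at the Poisson ticks $T_n^\eps$ and have size $(\hat h_n^\eps - \tilde h_n^\eps, f)$. By the fourth-moment bound in assumption A of Theorem \ref{mainthm} and the fact that $\hat h^{B_n^\eps}$ has the same joint law as $\tilde h^{B_n^\eps}$, each jump satisfies $\E[|\Delta (M_{T_n^\eps}^\eps, f)|^4] \leq C \sup|f|^4 \eps^{2(2+d)}$. Since there are on average $\t t = O(\eps^{-(2+d)})$ jumps in $[0,t]$, the trivial bound $\E[\sup_s |\Delta (M_s^\eps, f)|^4] \leq \sum_n \E[|\Delta (M_{T_n^\eps}^\eps,f)|^4] = O(\eps^{2+d})$ together with Cauchy--Schwarz gives $\E[\sup_s |\Delta (M_s^\eps,f)|^2] = O(\eps^{(2+d)/2}) \to 0$.

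The heart of the argument is the convergence $(A^\eps_t,f) \to c t$ in probability with $c=\langle af\,|\,af\rangle$. By MTD condition 2 the conditional expectation $\E[(\tilde h^\eps(s,x),f)^2 \mid h^\eps_s|_{D\setminus B(x,\eps)}]$ is deterministic and equal to $\E[(\tilde h^{B(x,\eps)},f)^2]$ (using that the marginal law of $\g_s$ equals that of $h^D$ for all $s$, as established in the proof of Theorem \ref{mainthm}). Hence the first summand in the formula for $(A^\eps_t, f)$ is deterministic and equals $(\t t / |D|)\int_D \E[(\tilde h^{B(x,\eps)}, f)^2]\,dx$. By the weak-scaling part of MTD 2 together with the uniform bound therein, dominated convergence and the choice $\t=C_{\mathrm{SHE}}\eps^{-(2+d)}$ with the appropriate $C_{\mathrm{SHE}}$ yield that this deterministic piece converges to $t\langle af\,|\,af\rangle$. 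The expectation of the second summand equals the same quantity by stationarity, so it remains to show its variance vanishes.

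The key observation for the variance bound is that whenever $|x-x'|>2\eps$, so $B(x,\eps)$ and $B(x',\eps)$ are disjoint, the random variable $(\tilde h^\eps(s,x'),f)^2$ is measurable with respect to $\sigma(h^\eps_s|_{D\setminus B(x,\eps)})$ (since $\tilde h^\eps(s,x')$ is supported in $B(x',\eps) \subset D\setminus B(x,\eps)$); applying MTD condition 2 to the ball $B(x,\eps)$ and conditioning gives
$$\mathrm{Cov}\bigl((\tilde h^\eps(s,x),f)^2,\,(\tilde h^\eps(s,x'),f)^2\bigr) = 0.$$
Only the near-diagonal region $\{|x-x'|\leq 2\eps\}$, of volume $O(|D|\eps^d)$ in $D\times D$, contributes to the spatial double integral; combined with assumption A's bound $\E[(\tilde h^\eps(s,x),f)^4] = O(\eps^{2(2+d)})$ this yields $\mathrm{Var}\bigl(\int_D (\tilde h^\eps(s,x),f)^2 dx\bigr) = O(\eps^d \cdot \eps^{2(2+d)})$, uniformly in $s$ by stationarity. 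Cauchy--Schwarz in the two time variables then bounds the variance of the entire random summand by $(\t/|D|)^2 \cdot t^2 \cdot |D|\eps^d \eps^{2(2+d)} = O(\eps^d) \to 0$. The main obstacle is thus this variance computation, where the decorrelation coming from MTD condition 2 on disjoint balls is essential; with it in hand, $(A^\eps_t,f) \to t\langle af\,|\,af\rangle$ in $L^2$, hence in probability, and Theorem \ref{thm:mCLT} yields the stated convergence of $(M^\eps - M_0^\eps,f)$.
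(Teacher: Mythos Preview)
Note first that what you have written is not a proof of Theorem~\ref{thm:mCLT} itself --- that result is quoted without proof from \cite[Thm~1.4,~p.339]{EthierKurtz} --- but rather a proof of Proposition~\ref{prop:mCLT}, which \emph{applies} Theorem~\ref{thm:mCLT}. Read as a proof of Proposition~\ref{prop:mCLT}, your argument is correct and follows essentially the same route as the paper: verify the three hypotheses of the martingale CLT, with the crux being the decorrelation $\mathrm{Cov}\bigl((\tilde h^\eps(s,x),f)^2,(\tilde h^\eps(s,x'),f)^2\bigr)=0$ for $|x-x'|>2\eps$, which follows from MTD condition~2 together with Lemma~\ref{remarkpropmarkov1}(ii).

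The only differences are cosmetic. For \eqref{EK_1.16} you bound $\E[\sup_s|\Delta M^\eps_s|^4]$ by the sum of fourth moments of the jumps and then apply Cauchy--Schwarz, whereas the paper writes $\E[\sup_s|\Delta M^\eps_s|^2]$ as a tail integral and uses a union bound plus Markov; your route is a touch shorter. For \eqref{EK_1.19} you bound the variance of the double integral directly via Cauchy--Schwarz in the two time variables combined with stationarity, while the paper first breaks the time integral into the Poisson inter-arrival intervals $[T^\eps_{k-1},T^\eps_k)$ and then applies Chebyshev to the resulting sum; both arguments land on the same $O(\eps^d)$ bound.
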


\begin{proof}[Proof of Proposition \ref{prop:mCLT}]
We will just verify that the conditions of Theorem \ref{thm:mCLT} are verified.

First, we already showed in Section \ref{section_epsilon} that for every $f \in C_c^\infty(D)$ the process $((M^\eps_t,f)-(M^\eps_0,f))$ is a c\`adl\`ag martingale equal to 0 at $t=0$ and whose quadratic covariation is given by the process $(A^\eps_t,f)$, whose continuity is clear from its definition.
 
Hence it remains to show \eqref{EK_1.16} and \eqref{EK_1.19} with \corr{$c=\frac{2}{\kappa_d}\langle af~|af\rangle$}.
We will start from proving \eqref{EK_1.16}.\\
We compute
        \begin{align*}
            \E\left[ \sup_{s\leq t} |(M^\eps_s,f)-(M^\eps_{s-},f)|^2\right] &= \E\left[ \sup_{n\leq N^\eps_t} |(\hat{h}^\eps_n,f)-(\tilde{h}^\eps_n,f)|^2\right] \\
            &\leq 4\E\left[ \sup_{n\leq N^\eps_t} |(\hat{h}^\eps_n,f)|^2\right] \\
            &= 4\int_0^\infty \P\Big(\sup_{n\leq N^\eps_t} |(\hat{h}^\eps_n,f)|^2 \geq x\Big)dx \\
            &\leq 4\eps + 4\int_\eps^\infty \P\Big(\exists n\leq N^\eps_t~:~ |(\hat{h}^\eps_n,f)|^4 \geq x^2\Big)dx\\
            &\leq 4\eps + 4\int_\eps^\infty \E[N^\eps_t]\P\big( |(\hat{h}^\eps_n,f)|^4 \geq x^2 \big)dx\\
            &\leq 4\eps + 4C\sup_{z\in D}|f(z)|^4\t t \eps^{2(2+d)}\int_\eps^\infty \frac{1}{x^2}dx,
        \end{align*}
        where we used MTD fourth moment condition in the last inequality. As $\t = C\eps^{-2-d}$ both summands converge to $0$ as $\eps \to 0$, as desired.

Let us now verify the convergence in \eqref{EK_1.19}. We will only consider the term $\frac{\t}{|D|} \int_0^t \int_D (\tilde{h}^\eps(s,x),f)^2 dx ds$, which we will denote by $(A^\eps_t,f)$. The term with the conditional expectation is deterministic thanks to MTD second moment and can be immediately treated to give $\frac{1}{\kappa_d}\langle af ~| af \rangle t$. 

Observe that \corr{$\lim_{\eps\mapsto 0}\E\left[ (A^\eps_t,f) \right] = \frac{1}{\kappa_d} \langle af~|~af\rangle t$}. So it is enough to show that
    \begin{align*}
        (A^\eps_t,f) - \E\left[(A^\eps_t,f) \right] = \dfrac{\t}{|D|} \sum_{k=1}^{N_t^\eps} (T_k^\eps - T_{k-1}^\eps) \int_D h^\eps_x - \E\left[ h^\eps_x \right] dx =: \sum_{k=1}^{N_t^\eps} X_{\eps,k},
    \end{align*}
    converges to 0 in probability as $\eps$ converges to 0, where $h^\eps_x = (\tilde{h}(T_{k-1}^\eps,x),f)^2$. We have, for each $\eps>0$ and $k\in \N_{>0}$ that $\E[X_{\eps,k}]=0$ and 
    \begin{align*}
        \E[X_{\eps,k}^2] = \left(\frac{\t}{|D|}\right)^2 \E[(T_k^\eps - T_{k-1}^\eps)^2] \int_D\int_D \text{Cov}\big(h^\eps_x - \E\left[ h^\eps_x\right]~;~h^\eps_y - \E\left[ h^\eps_y\right]\big)dxdy.
    \end{align*}
    From MTD second moment we can deduce that the double integral is zero for $|x-y|>2\eps$. Thus, applying Markov inequality and noticing that $T_k^\eps - T_{k-1}^\eps$ follows an exponential distribution of parameter $\t t$, we obtain
    $$ \E[X_{\eps,k}^2] \leq \frac{\t^2}{|D|^2} \frac{2}{(\t t)^2} (2\eps)^d \int_D \E[(h^\eps_x)^2]dx \leq C\eps^d \t^{-2}, $$
    for a certain constant $C>0$, where we used MTD fourth moment in the last inequality. Finally, we fix $a>0$ and we compute, using Chebychev inequality
    \begin{align*}
        \P\left( \left| \sum_{k=1}^{N_t^\eps} X_{\eps,k} \right|>a \right) &\leq \frac{\E \left| \sum_{k=1}^{N_t^\eps} X_{\eps,k} \right|^2 }{a^2} \leq \frac{(\t t)^2 \E[X_{\eps,k}^2]}{a^2} \leq C\frac{\eps^d}{a^2},
    \end{align*}
    for a certain constant $C>0$. The latter converges to 0 as $\eps$ converges to 0, such that \eqref{EK_1.19} holds.
\end{proof}


\subsection{The convergence of $\g_t$ as $\eps \to 0$}
\label{section_conv}

\subsubsection{Tightness of the family $(h^\eps)_{\eps>0}$}
In this section we will see how the previous results allow us to deduce that the family $(h^\eps)_{\eps>0}$ is tight in $D(\R^+;\Ss_D)$\footnote{$D(\R^+;\Ss_D)$ is the topological space of all c\`adl\`ag functions from $\R^+$ to $\Ss_D$}. 

\begin{prop}\label{prop:tighttriplet}
The triplet $((M_t^\eps)_{t \geq 0},(\g_t)_{t \geq 0}, (\int_0^t \int_D \Delta_\eps \g_s ds)_{t \geq 0})$ is tight in $D(\R^+, \Ss_D^3)$ and for each subsequential limit $(M_t, h^\infty_t, I_t)$ it holds that $M_t = h^\infty_t - I_t$ a.s..
\end{prop}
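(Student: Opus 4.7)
The plan is to rewrite the martingale expression from Proposition \ref{prop:martingales} in terms of the approximate Laplacian $\Delta_\eps$, obtaining a deterministic pathwise identity that drives everything. Since $\tilde h^\eps(s,x) = \g_s - \varphi^\eps(s,x)$ is supported in $\overline{B(x,\eps)}$ and since $\varphi^\eps(s,x)$ coincides with $P_{B(x,\eps)}[\g_s]$ on $B(x,\eps)$ by Lemma \ref{lemma_Gamma_approx_laplacian}, I would first check that
$$\dfrac{\t}{|D|}\int_D (\tilde h^\eps(s,x), f)\,dx = -(\Delta_\eps \g_s, f),$$
and conclude, via the formula for $(M_t^\eps,f)$ in Proposition \ref{prop:martingales}, the pathwise identity
$$(M_t^\eps, f) = (\g_t, f) - \int_0^t (\Delta_\eps \g_s, f)\,ds, \qquad f \in C_c^\infty(D).$$

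For tightness I would invoke Mitoma's theorem to reduce tightness in $D([0,\infty),\Ss_D^3)$ to tightness of the scalar projections against each $f \in C_c^\infty(D)$ in $D([0,\infty),\R^3)$. Tightness of $((M_t^\eps, f))_\eps$ follows from Proposition \ref{prop:mCLT}. For the continuous process $I_t^\eps := \int_0^t (\Delta_\eps \g_s, f)\,ds$, Corollary \ref{cor:intbyph} rewrites $(\Delta_\eps \g_s, f) = (\g_s, \Delta_\eps f)$; stationarity of $\g$, condition B of Theorem \ref{mainthm}, and the uniform $L^\infty$-bound $\sup_\eps\|\Delta_\eps f\|_\infty < \infty$ (a byproduct of the proof of Proposition \ref{prop_delta_eps}) give $\sup_\eps \sup_s \E[(\Delta_\eps \g_s, f)^2] < \infty$, so Cauchy--Schwarz yields $\E[(I_t^\eps - I_s^\eps)^2] \leq C(t-s)^2$ uniformly in $\eps$; Kolmogorov's criterion then delivers tightness of $(I^\eps)_\eps$ in $C([0,\infty),\R)$. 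Tightness of $((\g_t, f))_\eps$ then follows from the pathwise identity. Joint tightness of the triplet in the joint Skorokhod space holds because $I^\eps$ is continuous and the jumps of $M^\eps$ and $\g$ coincide (both equal $\hat h_n^\eps - \tilde h_n^\eps$ at the resampling times), so addition $D \times C \to D$ is continuous at the relevant limits.

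To identify the limit, I would fix a subsequence $\eps_n \to 0$ along which the triplet converges in law to $(M, h^\infty, I)$ and use Skorokhod representation to assume almost-sure convergence. Because $I^\eps$ is continuous and $M^\eps$, $\g$ share jumps, the subtraction in the pathwise identity is continuous in the Skorokhod topology, so passing to the limit yields $(M_t, f) = (h^\infty_t, f) - (I_t, f)$ at every continuity point of $h^\infty$. Right-continuity in $t$ extends this to all $t \geq 0$, and varying $f$ over a countable dense family in $C_c^\infty(D)$ gives $M_t = h^\infty_t - I_t$ in $\Ss_D$ almost surely.

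The main obstacle is precisely the Skorokhod-topology bookkeeping in the joint tightness and passage to the limit: addition is not continuous for general càdlàg processes, so everything hinges on the continuity of $I^\eps$ together with the matching jumps between $M^\eps$ and $\g$. All remaining arguments reduce to standard moment estimates using only the MTD, condition B of Theorem \ref{mainthm}, and the convergence of $\Delta_\eps$ established in Proposition \ref{prop_delta_eps}.
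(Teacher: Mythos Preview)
Your proof follows essentially the same route as the paper: Mitoma's reduction, tightness of $(M^\eps,f)$ from Proposition~\ref{prop:mCLT}, Kolmogorov--Chentsov for $I^\eps$ via Corollary~\ref{cor:intbyph} together with the uniform bound on $\Delta_\eps f$ coming from Proposition~\ref{prop_delta_eps}, and then tightness of $(\g,f)$ from the pathwise identity $M^\eps = \g - I^\eps$. You are in fact more careful than the paper on one point: the paper simply asserts that component-wise tightness yields joint tightness, whereas you correctly flag that tightness in $D([0,\infty),\R^3)$ (as opposed to the product space $D([0,\infty),\R)^3$) requires the continuity of $I^\eps$ and the fact that $M^\eps$ and $\g$ share identical jumps; you likewise spell out the passage of the pathwise identity to the limit via Skorokhod representation, which the paper leaves implicit within this proof.
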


\begin{proof}
By \cite[Thm 4.1]{Mitoma}, we know that it suffices to prove that for every test function $f\in C^\infty_c(D)$ the family $(((M_t^\eps,f))_{t\geq 0}, (\g,f))_{t\geq 0}, (\int_0^t (\Delta_\eps \g_s,f)ds)_{t \geq 0})$ is tight in $D(\R^+;\R^3)$. 

So fix $f\in C^\infty_c(D)$. Denote also $I^\eps_t := \int_0^t \Delta_\eps h^\eps_s ds$ such that $h^\eps = M^\eps - I^\eps$ as distributions.\\

From the convergence in law of $((M^\eps,f))_{\eps > 0}$ and the fact that  $D(\R^+, \R)$ is a Polish space, Prokhorov's criterion tells us that $((M^\eps,f))_{\eps>0}$ is tight. Thus, as any joint law is tight as soon as each component is tight, and $(h^\eps,f) = (M^\eps,f) - (I^\eps,f)$ is tight as soon as $(M^\eps,f), (I^\eps,f)$  are, it suffices to prove that the family $((I^\eps,f))_{\eps>0}$ is tight in $D(\R^+;\R)$ (in fact it is even tight in the space of continuous functions). 

To show this, we will use the Kolmogorov-Chentsov theorem, in the form stated in \cite[Thm~III.8.8,~p.139]{EthierKurtz}. To be more precise, to obtain tightness it suffices to prove that:
\begin{itemize}
\item[\textit{i)}] For all $\alpha>0$, there exists a compact set $K\subset \R$ such that for all $\eps>0$ and $t\geq 0$,
$$ \P((I^\eps_t,f) \in K) \geq 1-\alpha. $$
\item[\textit{ii)}] There exists $C>0$ such that, for all $\eps>0$ and $0\leq \delta \leq t$ 
\begin{align}
    \label{tightness_obj}
    \E\left[ |(I^\eps_{t+\delta} - I^\eps_t,f)|^2\right] \leq C \delta^2.
\end{align} 
\end{itemize}
\paragraph{proof of i)}
The first point is immediately checked since $\g_t$ are following the same law as $h^D$ for every $\eps>0$ and $t\geq 0$. Thus, since $I^\eps_0$ is tight in $\R$, for any $\alpha>0$, there exists a compact set $K\subset \R$ such that for all $\eps>0$ and $t\geq 0$,
$$ \P((I^\eps_0,f) \in K) \geq 1-\alpha. $$
This $K$ works for all the $(I^\eps_t,f)$.
\paragraph{proof of ii)} From Jensen inequality and again the fact that each marginals of $(I^\eps,f)$ follows the same law, we obtain that
$$ \E\left[ \left( \int_0^t (\Delta_\eps h^\eps_s,f) ds \right)^2 \right] \leq t^2 \E[(\Delta_\eps h_D,f)^2].$$
Thus it suffices to prove that $\sup_{\eps>0}\E[((\Delta_\eps h_D,f))^2] <\infty$. But by Corollary \ref{cor:intbyph}, we have that $\E[(\Delta_\eps h_D,f)^2] = \E[(h_D,\Delta_\eps f)^2]$. With similar computations to what has been done in the proof of Proposition \ref{prop_delta_eps} we obtain that $\Delta_\eps f = \Delta f + R_f$ where the rest $R_f$ is given again by \eqref{reste_taylor}. Adding the integral and using MTD's second moment we get a bound uniform in $\eps$, which concludes the proof.
\end{proof}

\subsubsection{Conclusion of Theorem \ref{mainthmlemma}} 
\label{section_cv_ps}

Using Proposition \ref{prop:tighttriplet} we can now consider a subsequential limit of 
 $((M_t^\eps)_{t \geq 0}, (\g_t)_{t \geq 0}, (\int_0^t \tilde h^\eps_s ds)_{t \geq 0})$ that we denote by $(M_t, h^\infty_t, I_t)$ and for which it holds that $M_t = h^\infty_t - I_t$. 

 The only remaining step is basically identifying that $I_t$ is what we expect it to be:
\begin{lemma}\label{lemma:finalize}
Let $(M_t, h^\infty_t, I_t)$ be any subsequential limit of $((M_t^\eps)_{t \geq 0}, (\g_t)_{t \geq 0}, (\int_0^t \tilde h^\eps_s ds)_{t \geq 0})$, whose existence is given by Proposition \ref{prop:tighttriplet}. 

We have that $I_t(f) = \int_0^t (\Delta h^\infty_s,f)ds$. 
 \end{lemma}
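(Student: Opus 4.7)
The plan is to exploit the integration-by-parts identity in Corollary \ref{cor:intbyph} to push the Laplacian from $h^\eps$ onto the test function, pass to the limit $\eps \to 0$, and thereby identify $I_t(f)$. Since $D([0,\infty), \Ss_D^3)$ is Polish, I would first invoke the Skorokhod representation theorem along the chosen subsequence to realise the convergence $(M^\eps, h^\eps, \int_0^\cdot \Delta_\eps h^\eps_s\, ds) \to (M, h^\infty, I)$ almost surely. It will then suffice, for each test function $f \in C^\infty_c(D)$, to show that $J^\eps_t(f) := \int_0^t (\Delta_\eps h^\eps_s, f)\, ds$ converges in $L^1$ to $\int_0^t (h^\infty_s, \Delta f)\, ds$; uniqueness of limits identifies $I_t(f)$ with this expression, and the equality $(h^\infty_s, \Delta f) = (\Delta h^\infty_s, f)$ is just the definition of the distributional Laplacian.

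Using Corollary \ref{cor:intbyph} I would split
\begin{equation*}
J^\eps_t(f) = \int_0^t (h^\eps_s, \Delta f)\, ds + \int_0^t (h^\eps_s, \Delta_\eps f - \Delta f)\, ds.
\end{equation*}
For the error term, the Taylor expansion carried out in the proof of Proposition \ref{prop_delta_eps} shows that $\sup_{z\in D}|\Delta_\eps f(z) - \Delta f(z)| = O(\eps)$ once $\tau_\eps$ is renormalised as chosen. Since each $h^\eps_s$ has the law of $h^D$ by the MTD-invariance of the dynamics, condition B of Theorem \ref{mainthm} gives $\E[(h^\eps_s, \Delta_\eps f - \Delta f)^2] = O(\eps^2)$ uniformly in $s$, so Cauchy--Schwarz and Fubini make this contribution vanish in $L^1$ as $\eps \to 0$.

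For the main term, the almost sure convergence in the Skorokhod $J_1$ topology on $D([0,\infty), \Ss_D)$ implies the pointwise convergence $(h^\eps_s, \Delta f) \to (h^\infty_s, \Delta f)$ at every continuity point of the real-valued cadlag path $s \mapsto (h^\infty_s, \Delta f)$, hence for Lebesgue-almost every $s \in [0,t]$. The uniform bound $\E[(h^\eps_s, \Delta f)^2] \leq C_0 \sup_{z\in D}|\Delta f(z)|^2$ from condition B provides the uniform integrability of the integrands on $[0,t] \times \Omega$, and Vitali's theorem concludes. The main technical subtlety is that $J_1$ convergence does not deliver convergence at every fixed time, but this is harmless since the non-continuity points of any cadlag path form a countable (hence Lebesgue-null) set, which is invisible under the time integral; combining the two convergences yields $I_t(f) = \int_0^t (\Delta h^\infty_s, f)\, ds$ a.s., as desired.
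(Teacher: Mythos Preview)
Your proof is correct and, while it shares with the paper the opening move of Skorokhod representation and the goal of $L^1$-convergence of the integrated term, it takes a genuinely different route. The paper keeps $\Delta_\eps$ on the distribution side and invokes Corollary~\ref{cor:deltah}, which rests on the mollifier construction of Proposition~\ref{prop:harmonic}; it then argues that the law of $\Delta_\eps h^\eps_s - \Delta h^\infty_s$ is independent of $s$ and combines the pointwise almost-sure convergence with uniform integrability of $(\Delta_\eps h^\eps_0)_{\eps>0}$. You instead transfer $\Delta_\eps$ onto the test function via the integration-by-parts identity of Corollary~\ref{cor:intbyph}, split $(h^\eps_s,\Delta_\eps f)=(h^\eps_s,\Delta f)+(h^\eps_s,\Delta_\eps f-\Delta f)$, and handle the error piece with the pointwise bound $\Delta_\eps f=\Delta f+O(\eps)$ already established in the tightness argument of Proposition~\ref{prop:tighttriplet}. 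Your route is more elementary in that it bypasses both the measurability of $x\mapsto\varphi^{B(x,\eps)}_D$ and the mollifier limits; the Vitali argument on $[0,t]\times\Omega$, together with the observation that discontinuity points of the limiting cadlag path form a Lebesgue-null set, cleanly replaces the paper's stationarity step. The paper's approach, by contrast, is phrased so as to work directly with $\Delta_\eps$ acting on the field, which is closer to what one would want in settings where no exact integration-by-parts identity for the approximate operator is available.
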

Before proving the lemma, let us see how it implies Theorem \ref{mainthmlemma}

\begin{proof}[Proof of Theorem \ref{mainthmlemma}]
Proposition \ref{prop:tighttriplet}  implies that $((M_t^\eps)_{t \geq 0}, (\g_t)_{t \geq 0}, (\int_0^t \g_s ds)_{t \geq 0})$ is tight in the Skorokhod topology and if we denote by $(M_t, h^\infty_t, I_t)$ the limit, we have $M_t = h^\infty_t - I_t$.
 Further Lemma \ref{lemma:finalize} implies that for every $f \in C_c^\infty(D)  $ we have that $I_t = \int_0^t \Delta h^\infty_s ds$.

Now, by construction $h^\infty_t$ has the law of $h^D$ for any fixed $t$. Further, from Proposition \ref{prop:mCLT} we know that for each $f \in C_c^\infty(D)$, the process $(M_t,f) = (h^\infty_t,f) - \int_0^t (\Delta h^\infty_s,f) ds$ is a continuous centered Gaussian process with variance $\E[(M_t,f)^2]=\frac{2}{\kappa_d}\langle af~|~af\rangle t$. Thus we can apply Corollary \ref{cor:char} to conclude that $h^\infty_t$ is a solution of the $a\sqrt{2/\kappa_d}-$SHE. 

Finally, as every subsequential limit $h_t^\infty$ has the same law, we conclude the actual convergence in law in for the Skorokhod topology.
\end{proof}
 
Finally, it remains to prove Lemma \ref{lemma:finalize}.

\begin{proof}[Proof of Lemma \ref{lemma:finalize}]
By hypothesis, $\g \mapsto \gi$ in law, and by Skorokhod representation theorem we can suppose without loss of generality that the convergence is almost sure. By the expression \eqref{MP1_epsilon} it suffices then to prove that $\int_0^t\Delta_\eps \g_sds \mapsto \int_0^t\Delta \gi_sds$ almost surely for every $t\geq 0$.

For the integrand this follows from Corollary \ref{cor:deltah} by choosing $h_n = h_s^\eps$ and $h^D = h_s^\infty$. We can further compute\begin{align*}
    \E\left[\left| \int_0^t\Delta_\eps \g_sds - \int_0^t\Delta \gi_sds \right|\right] &\leq \int_0^t \E\left|\Delta_\eps \g_s - \Delta \gi_s\right|ds.
\end{align*}
By the MTD we have that the law of $\Delta_\eps \g_s - \Delta \gi_s$ does not depend on $s$, and we proved above the almost sure convergence. Thus to obtain the convergence in $L^1$ it is enough to show that the family $(\Delta h^\eps_0)_{\eps>0}$ is uniformly integrable. We have $\sup_{\eps>0}\E[(\Delta h^\eps_0)^2]=\sup_{\eps>0}\E[(\Delta h_D)^2]$ which we already showed is finite in the proof of \eqref{tightness_obj}. 

Hence we proved that $\int_0^t\Delta_\eps \g_sds \mapsto \int_0^t\Delta \gi_sds$ in $L^1$. Taking a further subsequential limit to have almost sure convergence then gives the desired conclusion.
\end{proof}


\section{The case of Fractional Gaussian free fields}
\label{section_fractional}

In this section we explain the modifications needed to generalize the proof above to fractional Gaussian free fields for $\alpha \in (0,1)$. In fact, only small modifications are needed and these are explained in Section \ref{sec:modif}. First we will however recall some concepts around the fractional Laplacian, including $\alpha$-harmonic functions / extensions and the fractional SHE.

\subsection{The fractional Laplacian, $\alpha$-harmonic functions / extensions and the fractional SHE}\label{section_frac_intro}

Recall that the Riesz fractional Laplacian given by
\begin{align*}
    (-\Delta)^\alpha u(z) = \lim_{\eps\mapsto 0} \int_{\R^d-B(z,\eps)} \frac{u(z)-u(y)}{|z-y|^{d+2\alpha}}dy,
\end{align*}
where we extended the values of $u$ outside of $D$ by $0$. Notice it is not given by taking the basis of eigenfunctions of $-\Delta$ and reweighting the coefficients, rather it is given by the restriction of the whole space Laplacian (defined by reweighing the Fourier coefficients in the Fourier transform), see \cite{FGF_review}.

For $\alpha \in (0,1)$ the fractional Green function $G_\B^\alpha$ in the ball can be explicitly computed \cite[Eq.~4.3]{FGF_review} or \cite[Section 1.1]{bogdan2009} and is given by:
$$G^\alpha_\B(x,y) = k_{\alpha,d}|x-y|^{2\alpha-d}\int_{1}^{\frac{||x|y-x/|x||}{|x-y|}}(v^2-1)^{\alpha -1}v^{1-d}dv,$$
with $k_{s,d} = \frac{\Gamma(1+d/2)}{d\pi^{d/2}4^{d-1}((\alpha-1)!)^2}$.
From here, a direct calculation shows that it gives rise to a compact integral operator in this regime. 

In general there are no explicit formulas for Green's functions arbitrary bounded domains $D$ with smooth boundary, however there is enough control on it to obtain again that the related integral operator is compact, see e.g. \cite[Thm ~1.1]{ChenSong}.

\subsubsection{The $\alpha$-mean property and $\alpha$-harmonic extension}

Recall the mean value property for harmonic functions: if $f$ is harmonic, then $f(z)$ can be obtained by testing $f$ against a smooth symmetric test function of unit mass around $z$. Such a nice general local formulation is no longer true for the fractional Laplacian in $\alpha \in (0,1)$, because it is non-local - the associated L\'evy process has jumps. Still a certain mean-value property persists. To state it we first introduce the $\alpha$-mean kernel:
\begin{defi}
    \label{defi_s_mean_kernel}
    For any ball $B(x,\eps)\subset D$, we define the $\alpha$-mean kernel as the function $A_x^\eps$ by
    $$A_x^\eps(y) ~ =  \left| 
\begin{array}{ccccc}
	  c(d,\alpha) \frac{\eps^{2\alpha}}{(|y-x|^2-\eps^2)^\alpha|y-x|^d} & \text{ if} & y\in \R^d-B(x,\eps) \\
	0 &\text{ if} & y\in \bar{B}(x,\eps)
\end{array},
\right. $$
where $c(d,\alpha)$ is a normalization constant such that $\int_{\R^d} A_x^\eps(y)dy =1$.
\end{defi}
\noindent For the exact value of $c(d,\alpha)$ and more details about the $\alpha$-mean kernel, one can look at \cite{Bucur, bucur_al}. Observe that the $\alpha$-mean kernel is not local nor smooth, thus a priori cannot be tested against a random distribution unless one assumes more regularity. For us the following scaling property will be important: 
\begin{lemma}
For any $x\in D$ and $\eps_1,\eps_2>0$,
\begin{align}
    \label{s_mean_kernel_scaling}
    A_x^{\eps_1}(2\eps_1 y) = \left(\dfrac{\eps_2}{\eps_1}\right)^d A_x^{\eps_2}(2\eps_2 y). 
\end{align}
\end{lemma}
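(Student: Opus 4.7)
The statement is a direct algebraic identity that follows from the explicit formula for $A_x^\eps$ given in Definition \ref{defi_s_mean_kernel}, so the plan is simply to carry out the substitution carefully. The key structural observation is that $A_x^\eps(y)$ depends on $y-x$ only through the ratio $|y-x|/\eps$, apart from an overall prefactor of $\eps^{-d}$. Concretely, writing $y - x = r \theta$ with $r > \eps$ and $\theta$ a unit vector, the explicit formula gives
\begin{equation*}
A_x^\eps(y) \;=\; c(d,\alpha)\,\frac{\eps^{2\alpha}}{(r^2-\eps^2)^\alpha\,r^d},
\end{equation*}
so the substitution $r = 2\eps|y|$ (i.e.\ evaluating at the offset $2\eps y$ from $x$, which is the meaning of the notation $A_x^\eps(2\eps y)$) yields
\begin{equation*}
A_x^\eps(2\eps y) \;=\; c(d,\alpha)\,\frac{\eps^{2\alpha}}{\eps^{2\alpha}(4|y|^2-1)^\alpha\,(2\eps)^d|y|^d} \;=\; \frac{1}{\eps^d}\cdot\frac{c(d,\alpha)}{2^d(4|y|^2-1)^\alpha |y|^d}.
\end{equation*}

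The right-hand side factorises as $\eps^{-d}$ times a function of $y$ alone (valid whenever $|y| > 1/2$, which is exactly the condition that $2\eps y$ lies outside $B(x,\eps)$; for $|y| \le 1/2$ both sides vanish by the second line of the definition).

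To conclude, I would then apply this identity with $\eps = \eps_1$ and with $\eps = \eps_2$ and take the quotient, obtaining
\begin{equation*}
\frac{A_x^{\eps_1}(2\eps_1 y)}{A_x^{\eps_2}(2\eps_2 y)} \;=\; \frac{\eps_2^d}{\eps_1^d},
\end{equation*}
which is exactly the claimed relation. There is no genuine obstacle here: the only point that merits a brief comment is checking that the region of nonvanishing ($|y|>1/2$) is the same for both sides, so that one does not need to treat the vanishing region separately.
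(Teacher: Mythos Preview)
Your proof is correct and is exactly the direct computation one would expect; the paper states this lemma without proof, treating it as immediate from the explicit formula in Definition~\ref{defi_s_mean_kernel}. Your remark that the notation $A_x^\eps(2\eps y)$ must be read as evaluation at the point $x+2\eps y$ (i.e.\ at offset $2\eps y$ from the centre) is a useful clarification, since taken literally the formula would otherwise only hold for $x=0$.
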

The $\alpha$-mean value property then takes the following form and in fact characterises $\alpha$-harmonic functions, as shown in \cite[Thm 2.2]{Bucur}:
\begin{prop}[$\alpha$-mean value property]
    \label{lemme_mean_value_property_fractional}
    Let $0<\alpha<1$. A function $\varphi \in C^\infty_c(\R^d)$ is $\alpha$-harmonic in a ball $B\subset D$ if and only if for all $x\in B$ and $r>0$ such that $B(x,r)\subset B$ we have
    $$ \varphi(x) = \int_{\R^d} A_x^r(y)\varphi(y)dy. $$
\end{prop}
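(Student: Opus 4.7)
The plan is to exploit the probabilistic interpretation of the fractional Laplacian: up to a positive multiplicative constant, $(-\Delta)^\alpha$ is the infinitesimal generator of the isotropic $2\alpha$-stable Lévy process $(X_t)_{t\ge 0}$ on $\R^d$, and the kernel $A_x^r$ is, by construction and choice of the normalization $c(d,\alpha)$, the density of the exit position $X_\tau$ from $B(x,r)$ under $\P_x$, where $\tau := \inf\{t>0 : X_t \notin B(x,r)\}$. Once this identification is in place both implications reduce to classical stopped-martingale arguments.

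For the forward direction, assume $(-\Delta)^\alpha \varphi = 0$ on $B$ and fix $x \in B$ together with $r>0$ such that $B(x,r) \subset B$. By Dynkin's formula the process
$$\varphi(X_{t\wedge \tau}) - \int_0^{t\wedge \tau} (-\Delta)^\alpha \varphi(X_s)\,ds$$
is a $\P_x$-martingale, and on $\{s<\tau\}$ the integrand vanishes since $X_s \in B(x,r) \subset B$. As $\varphi \in C_c^\infty(\R^d)$ is bounded and $\E_x[\tau]<\infty$ for the $2\alpha$-stable process exiting a ball, optional stopping and dominated convergence as $t\to\infty$ give
$$\varphi(x) \;=\; \E_x[\varphi(X_\tau)] \;=\; \int_{\R^d} \varphi(y)\,A_x^r(y)\,dy.$$

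For the reverse direction, suppose the mean-value identity holds for every admissible $r$. Since $A_x^r$ is a probability density, subtracting $\varphi(x)$ from both sides yields $\int_{\R^d}(\varphi(y)-\varphi(x)) A_x^r(y)\,dy = 0$. Using the explicit form of $A_x^r$, the scaling relation \eqref{s_mean_kernel_scaling}, and a Taylor expansion of $\varphi$ around $x$, one can show that as $r \to 0$ this integral is a strictly positive multiple of $(-\Delta)^\alpha \varphi(x)$ (the multiplicative constant is exactly the reciprocal of the one absorbed in the definition of $(-\Delta)^\alpha$). Since this vanishes for all small $r$, we conclude $(-\Delta)^\alpha \varphi(x)=0$ for every $x \in B$.

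The main technical obstacle is the reverse direction: the kernel $A_x^r$ has an integrable but non-trivial singularity of order $\alpha$ along $\partial B(x,r)$, so interchanging the $r \to 0$ limit with the integration requires a careful decomposition of $A_x^r$ into a piece carrying the leading Riesz-type behavior plus an error that can be controlled using the compact support and smoothness of $\varphi$. A cleaner workaround, should the explicit asymptotics become cumbersome, is to invoke uniqueness of the exterior Dirichlet problem for $(-\Delta)^\alpha$ on $B(x,r)$ in the regime $\alpha\in(0,1)$: any function satisfying the mean-value identity must coincide inside $B(x,r)$ with the $\alpha$-Poisson/harmonic extension of its own exterior values, and that extension is $\alpha$-harmonic by construction.
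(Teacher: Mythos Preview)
The paper does not actually prove this proposition: it is stated with the attribution ``as shown in \cite[Thm~2.2]{Bucur}'' and no argument is given. So there is no in-paper proof to compare against; your proposal supplies one where the authors simply quote the literature.

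Your forward direction is the standard and correct route: identifying $A_x^r$ with the exit distribution of the isotropic $2\alpha$-stable process from $B(x,r)$ (the Riesz/Blumenthal--Getoor--Ray formula) and applying Dynkin plus optional stopping is exactly how this is done in, e.g., Bogdan et al.\ or Bucur.

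For the reverse direction your first argument is morally right but imprecisely stated. The integral $\int(\varphi(y)-\varphi(x))A_x^r(y)\,dy$ is zero for every admissible $r$; after dividing by $r^{2\alpha}$ you get
\[
c(d,\alpha)\int_{|y-x|>r}\frac{\varphi(y)-\varphi(x)}{(|y-x|^2-r^2)^\alpha|y-x|^d}\,dy=0,
\]
and letting $r\to 0$ this tends to a \emph{negative} multiple of $(-\Delta)^\alpha\varphi(x)$, not a positive one (note the sign convention in \eqref{laplacianfractionalintegral}). That is cosmetic. The substantive point you flag---justifying the passage to the limit across the boundary singularity $(|y-x|^2-r^2)^{-\alpha}$---is the real work, and you are right that it needs a splitting argument (near/far from $\partial B(x,r)$) rather than plain dominated convergence. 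Your ``cleaner workaround'' via uniqueness of the $\alpha$-Dirichlet problem on $B(x,r)$ is valid and is in fact closer to how Bucur argues: the mean-value identity says $\varphi$ agrees on $B(x,r)$ with the $\alpha$-Poisson extension of its own exterior data, which is $\alpha$-harmonic. Either route closes the proof.
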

\noindent Notice that prolonging $\varphi$ by $0$ outside of $D$ becomes a necessity. The $\alpha$-mean kernel is closely related to the fractional Poisson kernel:
\begin{defi}
Let $\alpha\geq 0$. For each ball $B(x,\eps)\subset D$, the fractional Poisson kernel is defined by
\begin{align}
\label{def_poisson_kernel_fractional}
\begin{array}{ccccc}
	P^\alpha_{B(x,\eps)} & : &  B(x,\eps) \times \R^d-\bar{B}(x,\eps) & \longrightarrow & \R \\
	& & (z,y) & \longmapsto & c(d,\alpha)\left(\dfrac{\eps^2-|z-x|^2}{|x-y|^2-\eps^2}\right)^\alpha\dfrac{1}{|z-y|^d}
\end{array}.
\end{align}
Then, if $g$ is a continuous function on $C^0(\R^d)$ the fractional Poisson integral of $g$ is defined by, with $z\in B(x,\eps)$
\begin{align}
\label{def_poisson_int_fractional}
P^\alpha_{B(x,\eps)}[g](z) := \int_{\R^d-\bar{B}(x,\eps)} g(y)P^\alpha_{B(x,\eps)}(z,y)dy.
\end{align}
The function
$$u(z) ~ =  \left| 
\begin{array}{ccccc}
	P^\alpha_{B(x,\eps)}[g](z) & \text{ if} & z\in B(x,\eps) \\
	g(z) &\text{ if} & z\notin B(x,\eps)
\end{array},
\right. $$
is in $C^0(\R^d)$ and harmonic on $B(x,\eps)$.\\
\end{defi}

\subsubsection{The fractional stochastic heat equation}

\begin{defi}
The fractional stochastic heat equation (FSHE) on a bounded smooth $D$ is the SPDE,
\begin{align}
\label{FSHE}
\begin{cases}
& \partial_t u + \da u = a\dot{W} \\
&  u(0,x) = u_0(x), \qquad x\in D\\
& u(t,x) = 0, \qquad x\in \partial D,~ t\geq 0
\end{cases}
\end{align} 
where as before $\dot{W}$ is a space-time white noise; further $a(x)$ is a space-time constant and $u_0 \in \Ff_0$ is the initial condition. Again the zero boundary condition has to be interpreted in the sense of Definition \ref{defiZB}.
\end{defi}
For $\alpha \in (0,1)$, random-field solutions exist only in the low-dimensional/subcritical regimes; this distinction is not important here, because the argument only uses the distribution-valued formulation. We will only need the following two facts that are direct to establish similarly to the SHE case:
\begin{itemize}
    \item The FSHE can be rewritten as $\sum_{n=0}^\infty A_n(t)f_n$ where the $A_n(t)$ are solutions to the Ornstein-Uhlenbeck processes with parameter $\mu_n^\alpha$ 
$$ dX_t = -\mu_n^\alpha X_t dt + dB_t^n, $$
and the $B_t^n$ are Brownian motions of covariation given by $\langle B_t^n,B_t^m\rangle = a^2t\langle f_n~|~ f_m\rangle$. Here $(f_n)_{n \geq 1}$ is the basis of eigenfunctions of $(-\Delta)^\alpha$ guaranteed by the fact that the Green's operator is compact.\\
    \item Secondly, we have that the $\fgf$ is the unique stationary solution of the FSHE. That can be seen from the previous point or by generalizing the arguments in \cite{LototskyShah}
\end{itemize}

\subsection{Modifications of the proof in the fractional case}\label{sec:modif}

Here we explain the small modifications needed to adapt the proof from the GFF case to the FGF case. 

\subsubsection{Preliminary section and the definition of dynamics}
The section \ref{sec:prelimGFF} works pretty much the same, only when obtaining the estimates on the two-point function one cannot use smooth mollifiers and instead has to use the $\alpha-$mean kernel from just above. 
More precisely, the analogues of Lemmas \ref{lemorth}, \ref{remarkpropmarkov1} work as stated. In Lemmas \ref{remarkpropmarkov2}, \ref{remarkpropmarkov3}, \ref{lemmekbound} the function $s_d$ has to be replaced by $s_{\alpha,d}(x) := |x|^{2\alpha -d}$ and the proofs need small modifications, for the convenience of the reader we give the proof of the analogue of Lemma \ref{remarkpropmarkov2} in Appendix \ref{appendix_lemmafrac}. The analogue of Lemma \ref{lemme_k_cov_kernel} works well again and Lemma \ref{lem:ctyk2} and Proposition \ref{prop:Green} are not needed - the first one because of our extra assumption on the martingale-type decomposition the second because of the simplifying assumption that $a(x)$ is constant. 

The definition of the dynamics and conclusion of the theorem works the same, with $\t = C\eps^{-2-d}$ replaced by $\t = C\eps^{-2\alpha -d}$ and the stochastic heat equation by the fractional stochastic heat equation. Recall that here we are working under the assumption $a(x)$ constant, so we don't need to evoke any further results on the covariance kernel to conclude.

\subsubsection{The approximation of $(-\Delta)^\alpha$}
\label{section_adaptation_frac_laplacian}
In this section some actual modifications are needed. Indeed, whereas the analogues of Lemma \ref{lemma_Gamma_approx_laplacian} and Proposition \ref{prop:harmonic} work quite similarly and are even simpler, due to the extra assumption in A' connecting directly the $\alpha-$harmonic part with the $\alpha-$harmonic extension. For example continuity of $k_D$ at the end of Proof of Lemma \ref{lemma_Gamma_approx_laplacian} can be swapped because of this. However, the proof of Proposition \ref{prop_delta_eps}, i.e. the approximation of the Laplacian has to reworked, thus let us state and prove it.

First, the approximation of $(-\Delta)^\alpha$:
\begin{defi}[Approximation of $\da$]
Let $\t = C\eps^{-2\alpha -d}$. For any fixed $\eps>0$ we define the operator $\dea ~:~C^0(\R^d) \longrightarrow C^0(\R^d)$ by 
\begin{align}
\label{Delta_epsilon_frac}
\dea g(z) := \dfrac{\t}{V_d}\int_{B(z,\eps)} \left(g(z)-P^\alpha_{B(x,\eps)}[g](z)\right)dx.
\end{align}
Equivalently for any test function $f \in C_c^\infty(\R^d)$ we set
\begin{equation}
\label{Delta_epsilon2_frac}
(\dea g, f) := \dfrac{\t}{|V_d|}\int_D\left(\int_{B(x,\eps)} \left(g(z)f(z)-P^\alpha_{B(x,\eps)}[g](z)f(z)\right)dz\right)dx.
\end{equation}
\end{defi}

We then have
\begin{prop}[Convergence of $\dea$ to $\da$]
\label{prop_delta_eps_frac}
There exists an explicit constant $C>0$ such that if $\t:=C\eps^{-(2\alpha+d)}$ then for any family $(g)_{g\in\Gg}$ in $C^\infty_c(\R^d)$ bounded in $\Ss_D$, we have that in $\Ss_D$
$$ \dea g \longmapsto \da g, \qquad \text{as }\eps\mapsto 0.$$
Moreover, this convergence is pointwise uniform, i.e. for any fixed $f\in C^\infty_c(D)$ we have
$$\sup_{g\in \Gg} |(\dea g - \da g,f)| \mapsto 0,\qquad \text{as }\eps\mapsto 0.$$ 
\end{prop}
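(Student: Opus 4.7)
The plan is to mirror the strategy of Proposition \ref{prop_delta_eps}. After reducing via linearity to evaluating $(\dea g, f)$ on test functions $g, f \in C^\infty_c(D)$ with supports in $D_\eps$, I would express the integrand $g(z)-P^\alpha_{B(x,\eps)}[g](z)$ through an appropriate Green's identity, perform the same two swaps as in the classical case, and identify the leading term as $(\da g, f)$ at the right scale. The key analytic input is the fractional Dirichlet Green's identity: for $g \in C^\infty_c(\R^d)$ and $B = B(x,\eps)$,
$$g(z) - P^\alpha_B[g](z) = \int_B G^\alpha_{x,\eps}(z,y)\, \da g(y)\, dy,$$
where $G^\alpha_{x,\eps}$ is the Dirichlet Green's function of $\da$ on $B$. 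This is the fractional counterpart of the Green's identity used in Proposition \ref{prop_delta_eps} and can be derived from Dynkin's formula applied to the symmetric $2\alpha$-stable process, whose generator is $-\da$ (see e.g.\ \cite{bogdan2009}).

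Substituting this identity into \eqref{Delta_epsilon2_frac} produces an expression of the form
$$(\dea g, f) \,\propto\, \t^\alpha \int dx \int_{B(x,\eps)} dz \int_{B(x,\eps)} dy\, f(z)\, G^\alpha_{x,\eps}(z,y)\, \da g(y).$$
I would then perform the same two swaps as in the classical proof: first replace $f(z)$ by $f(y)$, then replace $G^\alpha_{x,\eps}(z,y)$ by $G^\alpha_{x,\eps}(x,y)$. After both swaps, translation invariance and the symmetry of the Green's function decouple the inner double integral and produce the factor
$$V_d\eps^d \int_{B(0,\eps)} G^\alpha_{0,\eps}(0,u)\, du,$$
which equals $V_d\eps^d$ times the expected exit time $\E_0[\tau_{B(0,\eps)}]$ of the symmetric $2\alpha$-stable process from a ball of radius $\eps$. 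By scaling this equals $c_\alpha \eps^{d+2\alpha}$ for an explicit constant $c_\alpha>0$, so choosing $C$ to cancel this prefactor against $\t^\alpha = C\eps^{-(2\alpha+d)}$ makes the leading term exactly $(\da g, f)$, as desired.

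The main obstacle is controlling the two error terms. For the swap $f(z) \to f(y)$ one has $|f(z)-f(y)| \leq 2\eps \sup|\nabla f|$ since $|z-y|<2\eps$, and the remaining integral is structurally comparable to the leading one, yielding an $O(\eps)$ contribution. The more delicate step is the swap $G^\alpha_{x,\eps}(z,y) \to G^\alpha_{x,\eps}(x,y)$: in the classical case this followed from a Taylor expansion combined with spherical symmetry, but in the fractional case $G^\alpha_{0,1}$ is only Hölder-regular up to the boundary of the unit ball. I would handle this by using the scaling $G^\alpha_{x,\eps}(z,y) = \eps^{2\alpha-d} G^\alpha_{0,1}((z-x)/\eps, (y-x)/\eps)$ together with the two-sided estimates on $G^\alpha_{0,1}$ from \cite{ChenSong}, and exploit the rotational averaging over $x \in B(z,\eps)$ to kill the first-order contribution; what remains is lower order in $\eps$. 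Since the resulting bounds involve only finitely many derivatives of $f$ and the sup-norm of $\da g$, both controlled uniformly as $g$ varies in a bounded subset of $\Ss_D$, the convergence is pointwise uniform as claimed.
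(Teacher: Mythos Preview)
Your route via the fractional Dirichlet Green's identity is genuinely different from the paper's. The paper does not use $G^\alpha_{x,\eps}$ at all; instead it substitutes the explicit formula \eqref{def_poisson_kernel_fractional} for $P^\alpha_{B(x,\eps)}(z,y)$ into \eqref{Delta_epsilon_frac}, rewrites $\dea g(z)$ as an integral of $(g(z)-g(y))|z-y|^{-d}$ against an $x$-dependent weight, and then splits according to whether $|z-y|>2\eps$ or not. On the far part, dominated convergence produces $\da g$ directly from the integral representation \eqref{laplacianfractionalintegral}; on the near part, a second-order Taylor expansion of $g$ together with the bound $c(r)\le Cr$ of Lemma \ref{lemme_control_c} shows the contribution is $o(1)$. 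This is self-contained and sidesteps any regularity issues for the fractional Green's function.

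Your approach can be made to work, but there is a genuine gap in the second swap. Replacing $G^\alpha_{x,\eps}(z,y)$ by $G^\alpha_{x,\eps}(x,y)$ does \emph{not} produce a lower-order error: after integrating out $z$ and then $x$, the discrepancy equals
\[
\frac{\t^\alpha c_\alpha}{V_d}\,\eps^{d+2\alpha}\Big(\int_{\B}(1-|v|^2)^\alpha\,dv - V_d\Big)\,(\da g,f),
\]
which is $O(1)$ and strictly nonzero for every $\alpha\in(0,1)$. The rotational averaging you invoke cannot kill this, because it is a zeroth-order mismatch of constants, not a first-order term. The fix is simply to drop the second swap: after replacing $f(z)$ by $f(y)$, integrate $z$ out exactly using the exit-time formula $\int_{B_x}G^\alpha_{x,\eps}(z,y)\,dz=\E_y[\tau_{B_x}]=c_\alpha(\eps^2-|y-x|^2)^\alpha$ for the symmetric $2\alpha$-stable process, then integrate over $x$. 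This produces the leading term exactly, leaving only the $O(\eps)$ first-swap error; uniformity over bounded $\Gg\subset\Ss_D$ follows since the bound involves only $\|\nabla f\|_\infty$ and $\sup_{g\in\Gg}\|\da g\|_\infty$, the latter being controlled by finitely many seminorms of $g$.
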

We will give a full proof of Proposition \ref{prop_delta_eps_frac} as it differs considerably from the proof of Proposition \ref{prop_delta_eps}. The main reason is that $(-\Delta)^\alpha g(z)$ is not a term of the Taylor's expansion of $g$ and thus the spherical symmetry used in the end of the proof does not hold anymore. We circumvented this by using the fact that the operator $(-\Delta)^\alpha g(z)$ as expressed in \eqref{laplacianfractionalintegral} behaves well in conjunction with the fractional Poisson kernel.
\begin{proof}
We compute, with \eqref{Delta_epsilon_frac},
\begin{align*}
    \dea g(z)&= \frac{\t}{V_d} \int_{B(z,\eps)} \int_{\R^d-B(x,\eps)} (g(z)-g(y))P^\alpha_{B(x,\eps)}(z,y)dydx\\
    &= c(d,\alpha)\frac{\t}{V_d} \int_{\R^d} \frac{g(z)-g(y)}{|z-y|^d} \int_{(\R^d-B(y,\eps))\cap B(z,\eps)} \left( \frac{\eps^2-|z-x|^2}{|x-y|^2-\eps^2} \right)^\alpha dxdy.
\end{align*}
Since when $|y-z|>2\eps$ we have $B(z,\eps) \subset \R^d-B(y,\eps)$, we can write
\begin{align*}
    \dea g(z)&= c(d,\alpha)\frac{\t}{V_d} \int_{\R^d-B(z,2\eps)} \frac{g(z)-g(y)}{|z-y|^d} \int_{B(z,\eps)} \left( \frac{\eps^2-|z-x|^2}{|x-y|^2-\eps^2} \right)^\alpha dxdy \\
    &+ c(d,\alpha)\frac{\t}{V_d} \int_{B(z,2\eps)} \frac{g(z)-g(y)}{|z-y|^d} \int_{(\R^d-B(y,\eps))\cap B(z,\eps)} \left( \frac{\eps^2-|z-x|^2}{|x-y|^2-\eps^2} \right)^\alpha dxdy
\end{align*}
For the first term of the sum, by Lebesgue's dominated convergence theorem we have, noting $w:=\frac{z-y}{\eps}$,
$$|w|^{2\alpha}  \int_{B(0,1)} \left( \dfrac{1 - |x|^2}{|w-x|^2-1}\right)^\alpha dx \longrightarrow \int_{B(0,1)} ( 1 - |x|^2)^\alpha dx,$$
as $\eps\mapsto 0$, which since by linear substitution $\int_{B(z,\epsilon)} \left( \frac{\epsilon^2 - |z-x|^2}{|x-y|^2-\epsilon^2}\right)^\alpha dx =\epsilon^d \int_{B(0,1)} \left( \frac{1 - |x|^2}{|w-x|^2-1}\right)^\alpha dx$, implies that
\begin{align*}
    & \lim_{\eps\mapsto 0} c(d,\alpha)\frac{\t}{V_d} \int_{\R^d-B(z,2\eps)} \frac{g(z)-g(y)}{|z-y|^d} \int_{B(z,\eps)} \left( \frac{\eps^2-|z-x|^2}{|x-y|^2-\eps^2} \right)^\alpha dxdy \\
    =& \lim_{\eps\mapsto 0} c(d,\alpha)\frac{\t}{V_d} \int_{\R^d-B(z,2\eps)} \frac{g(z)-g(y)}{|z-y|^d} \int_{B(0,1)} ( 1 - |x|^2)^\alpha dx \frac{\eps^{d+2\alpha}}{|z-y|^{2\alpha}} dy \\
    =& c(d,\alpha) \int_{B(0,1)} ( 1 - |x|^2)^\alpha dx~ \frac{\t \eps^{d+2\alpha}}{V_d} \lim_{\eps\mapsto 0} ~\int_{\R^d-B(z,2\eps)} \frac{g(z)-g(y)}{|z-y|^{2\alpha+d}} dy \\
    =& c(d,\alpha) \int_{B(0,1)} ( 1 - |x|^2)^\alpha dx~ \frac{\t \eps^{d+2\alpha}}{V_d} \da g(z),
\end{align*}
where the limit is uniform on every compact subset of $D$. We see that  we can choose 
$$\t := V_d\left(c(d,\alpha) \eps^{d+2\alpha} \int_{B(0,1)} ( 1 - |x|^2)^\alpha dx \right)^{-1}.$$
\corr{
For the second term of the sum, which we want to show vanishes as $\eps\to0$, set $\omega=(z-y)/\eps$ and define
\[
 c_0(\omega):=\int_{(\R^d-B(\omega,1))\cap B(0,1)} \left(\frac{1-|u|^2}{|u-\omega|^2-1}\right)^\alpha du .
\]
By rotational invariance $c_0(\omega)$ only depends on $|\omega|$. The affine substitution $x=z+\eps u$ gives
\[
\int_{(\R^d-B(y,\eps))\cap B(z,\eps)} \left( \frac{\eps^2-|z-x|^2}{|x-y|^2-\eps^2} \right)^\alpha dx
=\eps^d c_0\!\left(\frac{z-y}{\eps}\right).
\]
Thus the near-field term is
\[
T_\eps(z)=C_{\alpha,d}\frac{\t\eps^d}{V_d}
\int_{B(z,2\eps)}\frac{g(z)-g(y)}{|z-y|^d}
 c_0\!\left(\frac{z-y}{\eps}\right)dy .
\]
Changing variables $y=z+\eps r\theta$, with $0<r<2$ and $\theta\in\mathbb S$, yields
\[
T_\eps(z)=C_{\alpha,d}\frac{\t\eps^d}{V_d}
\int_0^2\frac{c_0(r)}{r}\int_{\mathbb S}\big(g(z)-g(z+\eps r\theta)\big)d\theta\,dr .
\]
The first-order Taylor term cancels by symmetry and
\[
\int_{\mathbb S}\big(g(z)-g(z+\eps r\theta)\big)d\theta
=-\frac{|\mathbb S|}{2d}\eps^2r^2\Delta g(z)+O(\eps^3r^3),
\]
uniformly for $g$ in bounded smooth families. By Lemma \ref{lemme_control_c}, $c_0(r)\leq Cr$ near $0$, and $c_0$ is locally bounded on $(0,2)$; hence $\int_0^2 r c_0(r)dr<\infty$. Since $\t\eps^d\eps^2=O(\eps^{2-2\alpha})$ and $\alpha<1$, we get $T_\eps(z)\to0$ uniformly on compact subsets.}

\end{proof}

\begin{lemm}
\label{lemme_control_c}
Define the radial function $c_0(r)$ as in the proof of Proposition \ref{prop_delta_eps_frac}. There exists a constant $C>0$ such that for any $r<1$, $c_0(r) \leq Cr$.
\end{lemm}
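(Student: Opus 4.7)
The plan is to work in polar coordinates centred at $\omega$, which by rotational symmetry I may take to be $re_1$. Setting $x = \omega + (1+s)\theta$ with $\theta \in \mathbb{S}$ the unit sphere of $\R^d$ and $s > 0$, the condition $|x-\omega| > 1$ becomes simply $s > 0$, and the denominator is $|x-\omega|^2 - 1 = 2s + s^2 \geq 2s$. In these coordinates the singular factor $(|x-\omega|^2-1)^{-\alpha}$ is just $s^{-\alpha}$ up to a bounded factor, so the problem reduces to understanding the geometry of the lune $L_r := B(0,1) \cap B(\omega,1)^c$ in the $(s,\theta)$-variables.

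Unwrapping the constraint $|x|^2 < 1$ as a quadratic in $(1+s)$, I expect $L_r$ to correspond exactly to $\theta_1 < -r/2$ together with $s \in (0, s_{\max}(\theta))$, for the explicit $s_{\max}(\theta) = \sqrt{1 - r^2(1-\theta_1^2)} - 1 - r\theta_1$. Two quantitative bounds will do all the work: first $s_{\max}(\theta) \leq r|\theta_1|$, which follows by applying $\sqrt{1-u} \leq 1 - u/2$ to the explicit formula; and second $1 - |x|^2 \leq C r|\theta_1|$ on $L_r$, obtained by directly expanding $|\omega + (1+s)\theta|^2$. Both scale as $r|\theta_1|$, and this coincidence is the geometric heart of the estimate: the lune is a thin crescent of radial thickness $\sim r|\theta_1|$ at angular direction $\theta$, across which the numerator $1-|x|^2$ grows linearly in $s$.

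Combining the two estimates, the integrand is pointwise bounded by $C(r|\theta_1|/s)^\alpha$, and since $\alpha < 1$ the $s$-integral $\int_0^{s_{\max}(\theta)} s^{-\alpha}\,ds$ converges, giving a bound $(r|\theta_1|)^\alpha s_{\max}(\theta)^{1-\alpha} \leq r|\theta_1|/(1-\alpha)$. A final integration over $\theta \in \mathbb{S}$ contributes only a bounded constant, yielding $c(r) \leq Cr$ with $C = C(\alpha,d)$, uniformly in $r \in (0,1)$. The main obstacle is exactly the singularity of the integrand at the inner boundary $\{s = 0\}$ of the lune: the crude estimate $|L_r|\cdot\sup(\text{integrand})$ is useless because the supremum is infinite, and the whole content of the argument is that the thickness $s_{\max}(\theta)$ and the numerator $1-|x|^2$ shrink at matching rates $\sim r|\theta_1|$, so that the singular $s^{-\alpha}$ factor remains integrable against the linear Lebesgue measure and produces exactly one power of $r$.
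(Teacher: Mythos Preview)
Your argument is correct and follows essentially the same route as the paper: pass to polar coordinates centred at $\omega$, bound the numerator $1-|x|^2$ by $O(r)$, bound the radial extent of the lune by $O(r)$, and integrate the singular $s^{-\alpha}$ factor over that range to produce exactly one power of $r$.

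The only difference is that you track the angular dependence more precisely: you compute $s_{\max}(\theta)$ explicitly and obtain the sharper bounds $1-|x|^2 \leq 2r|\theta_1|$ and $s_{\max}(\theta)\leq r|\theta_1|$, whereas the paper uses the cruder, angle-independent estimates $1-|x|^2 \leq r(2-r)$ and the inclusion $L_r \subset \bigcup_{t\in[0,r]} S^{1/2}(\omega,1+t)$ (so radial extent $\leq r$). Both lead to the same integral $\int_0^{O(r)} (O(r)/s)^\alpha\,ds = O(r)$; your extra $|\theta_1|$ factors cancel in exactly the same way and are not needed for the result. The paper's version is a bit shorter since it avoids solving the quadratic, but yours is equally valid and arguably more transparent about the geometry of the crescent.
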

\begin{proof}
First, note that for any $x\in (\R^d-B(\omega,1))\cap B(0,1)$ where $|\omega|=r$, we have $1-|x|^2 \leq r(2-r)$.\\
Moreover, we have the inclusion 
\begin{align}
    \label{eq_figure}
    (\R^d-B(\omega,1))\cap B(0,1) \subset \bigcup_{t\in[0,r]} S^{1/2}(\omega,1+t),
\end{align}
where $S^{1/2}(\omega,1+t)$ is the half of the sphere $S(\omega,1+t)$ the closest to the origin, as shown in Figure \ref{fig_preuve}. Thus, we can compute
\begin{align*}
    c_0(r) &\leq \int_0^r \int_{S^{1/2}(\omega,1+t)} \left(\dfrac{1-|x|^2}{|x-\omega|^2-1}\right)^\alpha dxdt\\
    &\leq \int_0^r \int_{S^{1/2}(\omega,1+t)} \left(\dfrac{r(2-r)}{t(2+t)}\right)^\alpha dxdt\\
    &=Cr^\alpha \int_0^r\frac{1}{t^\alpha}dt = Cr^\alpha r^{1-\alpha}=Cr,
\end{align*}
for a certain constant $C>0$.
\end{proof}

\begin{figure}
\centering
    \includegraphics[width=0.5\linewidth]{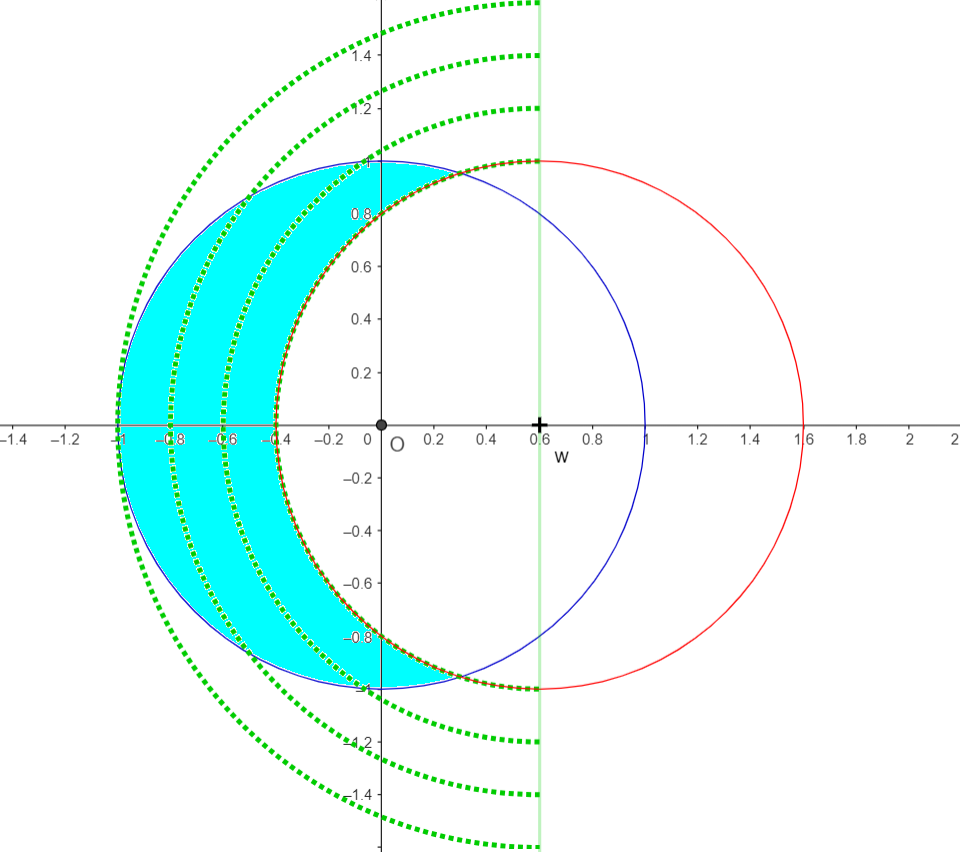}
    \caption{Figure of the situation for $d=2$ and $r=0.6$.\\
    The left hand side set of \eqref{eq_figure} is in blue, the sets $S^{1/2}(\omega,1+t)$ for $t\in \{0;0.2;0.4;0.6\}$ are shown in green}
    \label{fig_preuve}
\end{figure}

\subsubsection{Rest of the proof}
The rest of the proof of Theorem \ref{mainthmfrac} can be directly adapted from its counterparts from the proof of Theorem \ref{mainthm}, with only a few minor changes detailed at the beginning of this section.

\appendix
\section{Proof of Proposition \ref{propSHEOU}}
\label{appendix_walsh}
In this subsection we will prove Proposition \ref{propSHEOU} by following a strategy similar to the one in \cite{Walsh}.

As in \cite[p.~343]{Walsh}, using \eqref{SHEgreen} we obtain that the solution $u_t$ of the SHE as defined in \eqref{SHEsol} can be written, for every $f\in C^\infty(\R^d)$,
$$u_t(f) = \sum_{n=0}^\infty \langle f~|~f_n\rangle \int_0^t\int_D a(y)e^{-\mu_n(t-s)}f_n(y)W(dy,ds).$$
Thus, noting $A_n(t):=\int_0^t\int_D a(y)e^{-\mu_n(t-s)}f_n(y)W(dy,ds)$, this justifies the notation with the infinite sum
$$u_t := \sum_{n=0}^\infty A_n(t) f_n.$$
We will now prove that the random variables $A_n(t)$ are solutions to the Itô SDEs \eqref{propSHEOUeq}, and that the $B_t^n$ are Brownian motions of covariation given by $\text{Cor}(B_t^n,B_s^m) = t \langle af_n~|~af_m\rangle$.\\
~\\
Noting $B_t^n := \int_0^t\int_D a(y)f_n(y)W(dy,ds)$, we can immediatly see with Itô's formula that the $A_n(t)$ checks \eqref{propSHEOUeq}. Moreover, a common property of Walsh's integral is that each $B_t^n$ are martingales whose quadratic covariation is given by
$$ \text{Cor}(B_t^n,B_s^m) = \int_0^t\int_D a(x)f_n(x)a(x)f_m(x)dx\,ds = t \langle af_n~|~af_m\rangle .$$
To conclude, one should still prove that this characterizes uniquely $u_t$. To do so, since the Ornstein-Uhlenbeck SDEs \eqref{propSHEOUeq} admit a unique weak solution, it is enough to prove that the joint law of the process $(B_t^n~:~t\geq 0)_{n\geq 0}$ is uniquely determined by the condition $\text{Cor}(B_t^n,B_t^m)=t \langle  af_n~|~af_m\rangle$.\\
We choose $k\geq 1$, a finite subset $(n_1,\dots,n_k)\subset \N$ and $c=(c_1,\dots,c_k)\in\R^k$. We pose $m_t := \sum_{i=1}^k c_i B_t^{n_i}$ a martingale such that, noting $\langle\cdot~|~\cdot\rangle_t$ the quadratic variation,
\begin{align*}
\langle m\rangle_t = \sum_{i,j=1}^k c_ic_j \langle B^{n_i}~|~B^{n_j}\rangle_t = t\sum_{i,j=1}^k c_ic_j \langle a f_{n_i}~|~ a f_{n_j}\rangle = t (c^\dagger \cdot A \cdot c),
\end{align*}
where $A:=(\langle a f_{n_i}~|~ a f_{n_j}\rangle)_{1\leq i,j\leq k}$ and $v^\dagger$ is the transposed vector of a vector $v\in\R^d$.
Thus the exponential martingale associated to $m_t$ is 
$$\Ee(im)_t = \exp\left(im_t+\frac{t}{2}(c^\dagger \cdot A \cdot c)\right),$$
and is also a martingale. Thus, for $0\leq s\leq t$,
$$\E\left[\exp\left(ic^\dagger\cdot (m_t-m_s)\right)~|~\Ff_s\right] = \exp\left(-\dfrac{t-s}{2}(c^\dagger \cdot A \cdot c)\right).$$
We recognize the characteristic function of a multidimensional normal distribution such that the law of $m_t-m_s$ conditionnally to $\Ff_s$ is $\Nn(0,(t-s)A)$. Thus, $m_t$ follows the law of a continuous L\'evy process of $\Nn(0,(t-s)A)$ increments. This characterizes uniquely the law of $m_t$, and furthermore the law of the process $(B_t^n~:~t\geq 0)_{n\geq 0}$.

\section{Lemma for identifying martingales}
\begin{lemm}
\label{lemme_martingale}
Let $(\Omega,\Ff,(\Ff_t)_{t\geq 0},\P)$ be a filtered probability space.\\
If $(X_t)_t$ is a $\Ff_t$-adapted process taking values in a Banach space $(B,|\cdot|)$ such that there exist $\Ff_t$-adapted processes $(A_t)_{t\geq 0}$ and $(o_s(t))_{s,t\geq 0}$ verifying for every $h:=t-s$,
$$ \E[X_t~|~\Ff_s]=X_s+h A_s + o_s(h), $$
where
\begin{itemize}
\item[(i)] the random variables $o_s$ check $\E|o_s(h)|\leq C o(h)$, where $C$ is a constant,
\item[(ii)] the function $t \mapsto A_t$ is almost surely cadlag and Bochner-integrable,
\item[(iii)] for all $T>0$, we have $\E\sup_{t\leq T} |A_t|<\infty$.
\end{itemize} 
Then the process
$$Y_t := X_t - \int_0^t A_s ds,$$
is a $\Ff_t$-martingale.
\end{lemm}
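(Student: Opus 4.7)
To establish that $Y_t = X_t - \int_0^t A_s\,ds$ is an $\Ff_t$-martingale, fix $0 \leq s < t$; the plan is to show the equivalent identity
\begin{equation*}
\E[X_t - X_s \mid \Ff_s] \;=\; \E\!\left[\,\int_s^t A_r\,dr \;\Big|\; \Ff_s\right]
\end{equation*}
by a Riemann-sum argument based on the one-step expansion in the hypothesis. (Integrability of $Y_t$ follows from (iii) and the standing integrability of $X_t$ implicit in the conditional-expectation hypothesis.)

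For each $n \geq 1$, partition $[s,t]$ into $n$ equal sub-intervals with nodes $s = t_0^n < t_1^n < \cdots < t_n^n = t$, of common width $h_n := (t-s)/n$. Writing the total increment as a telescoping sum, applying the tower property with $\Ff_{t_k^n} \supseteq \Ff_s$ and invoking the expansion at each node gives
\begin{equation*}
\E[X_t - X_s \mid \Ff_s] \;=\; \sum_{k=0}^{n-1} \E\!\left[ h_n A_{t_k^n} + o_{t_k^n}(h_n) \,\Big|\, \Ff_s\right].
\end{equation*}
The strategy is then to let $n \to \infty$ and show that the Riemann-sum part converges to the conditional integral of $A$, while the remainder vanishes.

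For the remainder, condition (i) yields $\sum_{k=0}^{n-1} \E\bigl|o_{t_k^n}(h_n)\bigr| \leq n\,C\,o(h_n) = C(t-s)\cdot o(h_n)/h_n \to 0$ as $n \to \infty$, so this term drops out in $L^1$ and a fortiori in conditional expectation. For the Riemann sum, the càdlàg assumption in (ii) gives the pathwise (almost sure) convergence $\sum_{k=0}^{n-1} h_n A_{t_k^n} \to \int_s^t A_r\,dr$ in the Banach space $B$, while the uniform bound in (iii) provides a $\P$-integrable dominating random variable $(t-s)\,\sup_{r\leq t}|A_r|$. By dominated convergence for conditional expectations, we may pass to the limit inside $\E[\,\cdot\mid\Ff_s]$ and conclude that $\E[X_t-X_s\mid\Ff_s] = \E\!\left[\int_s^t A_r\,dr \mid \Ff_s\right]$, which is exactly the martingale identity for $Y$.

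The only delicate point is the interchange of limit and conditional expectation in the Banach-valued setting: this requires the Bochner-integrability clause in (ii) (so that the Riemann sums converge to a bona fide Bochner integral) together with the uniform bound in (iii) to dominate the approximants. Once these two ingredients are in place the argument reduces to the standard Riemann-sum proof, and the little-$o$ bound in (i) cleanly disposes of the error terms.
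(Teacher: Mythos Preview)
Your proposal is correct and follows essentially the same route as the paper's proof: partition $[s,t]$ uniformly, telescope, apply the one-step expansion via the tower property, kill the $o$-terms using (i), and pass to the limit in the Riemann sums using (ii) for almost sure convergence and (iii) for domination. The paper frames the conclusion as showing $\E\bigl|\,\E[X_t-X_s-\int_s^t A_r\,dr\mid\Ff_s]\,\bigr|=0$ rather than working directly inside the conditional expectation, but this is purely cosmetic.
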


\begin{proof}
Let $t>s\geq 0$, and prove that 
\begin{align}
\label{lemme_martingale_proof}
\E\left| ~ \E\left[ X_t-X_s - \int_s^t A_r dr ~\Big|~\Ff_s \right] ~ \right| =0. 
\end{align} 
We compute, with $\cup_{i=1}^n [s_i,t_i)$ an uniform partition of $[s,t)$ (\textit{ie} such that $t_i-s_i=h$ and $nh=t-s$)
\begin{align*}
\E\left| ~ \E\left[ X_t-X_s - \int_s^t A_r + dr ~\Big|~\Ff_s \right] ~ \right| &= \E\left| ~ \E \left[ \sum_{i=1}^n X_{t_i}-X_{s_i} - \int_s^t A_r dr ~\Big|~\Ff_s \right] ~ \right| \\
&= \E\left| ~ \E \left[ \sum_{i=1}^n hA_{s_i} + o_{s_i}(h) - \int_s^t A_r dr ~\Big|~\Ff_s \right] ~ \right| \\
&\leq \E\left| \sum_{i=1}^n hA_{s_i} - \int_s^t A_r dr \right| + \sum_{i=1}^n\E|o_{s_i}(h)|
\end{align*}
By the assumption \textit{(i)}, we have that $\sum_{i=1}^n\E|o_{s_i}(h)| \mapsto 0$ for $n\mapsto \infty$.\\
By the assumption \textit{(ii)} we have that $\sum_{i=1}^n hA_{s_i} \mapsto \int_s^t A_r dr$ almost surely for $n\mapsto\infty$, and with assumption \textit{(iii)} we can use the Bochner dominated convergence theorem to prove that the convergence is also in $L^1(\Omega)$.\\
Thus, \eqref{lemme_martingale_proof} indeed holds and $Y_t$ is a $\Ff_t$-martingale.
\end{proof}

\section{Lemma \ref{remarkpropmarkov3}}\label{appendix_lemma4}

\begin{proof}[Proof of Lemma \ref{remarkpropmarkov3}]
From Lemma \ref{remarkpropmarkov1}
\begin{align*}
\E\left[\varphi_D^{B(x,\eps')}(x)^4\right] &= \E\left[\varphi_D^{B(x,\eps)}(x)^4\right] + \E\left[\left((\tilde{h}^{B(x,\eps)},\eta_x^{\eps'})-(\tilde{h}^{B(x,\eps')},\eta_x^{\eps'})\right)^4\right] \\
&+ 6\E\left[\varphi_D^{B(x,\eps)}(x)^2\left((\tilde{h}^{B(x,\eps)},\eta_x^{\eps'})-(\tilde{h}^{B(x,\eps')},\eta_x^{\eps'})\right)^2\right].
\end{align*}
and the first point follows.
Now, define $2^{-n} \leq \eps < 2^{-(n-1)}$ and $n_0:=\inf\{m\leq n ~:~2^{-m}<\delta\}$. Then $\E\left[\varphi_D^{B(x,\eps)}(x)^2\right] \leq \E\left[\varphi_D^{B(x,2^{-n})}(x)^2\right] $.
Using the above identity several times we obtain
\begin{align*} 
& \E\left[\varphi_D^{B(x,2^{-n})}(x)^4\right] = \E\left[\varphi_D^{B(x,2^{-n_0})}(x)^4\right] + \sum_{m=n_0}^{n-1}\E\left[\varphi_{B(x,2^{-m})}^{B(x,2^{-(m+1)})}(x)^4\right]\\
&+ \sum_{m=n_0}^{n-1} \E\left[\varphi_D^{B(x,2^{-n_0})}(x)^2\varphi_{B(x,2^{-m})}^{B(x,2^{-(m+1)})}(x)^2\right] + 2\sum_{m>m'=n_0}^{n-1} \E\left[\varphi_{B(x,2^{-m})}^{B(x,2^{-(m+1)})}(x)^2\varphi_{B(x,2^{-m'})}^{B(x,2^{-(m'+1)})}(x)^2\right].
\end{align*}
We can bound this by
$$\E\left[ (h^D,\eta_x^{2^{-n_0}})^4 \right] + \sum_{m=n_0+1}^{n-1}\E\left[ (\tilde{h}^{B(x,2^{-(m-1)})},\eta_x^{2^{-m}})^4\right] + \E\left[ (h^D,\eta_x^{2^{-n_0}})^2\right]\sum_{m=n_0+1}^{n-1}\E\left[ (\tilde{h}^{B(x,2^{-(m-1)})},\eta_x^{2^{-m}})^4\right].$$
The uniform bound on the 4th moments (point B in the definition) bounds the first term by a constant that depends on $\delta$. Further $\sup_{x \in D}\eta_x^{2^{-m}} \leq 2^{md}$ and thus the MTD condition bounds each of the other terms by $2^{4md} 2^{-(m-1)2(d+2)}=2^{2(d+2)}2^{2m(d-2)}$. Finally the third term can be handled as in the proof of Lemma \ref{remarkpropmarkov2} with a constant depending on $\delta$. Summing now together, we obtain the relevant bounds.\\
Using H\"older's inequality, one can deduce the last identity.\end{proof}

\section{The analogue of Lemma \ref{remarkpropmarkov2} in the fractional case}\label{appendix_lemmafrac}

\begin{proof}
From the analog of Lemma \ref{remarkpropmarkov1} and Assumption A' we can write
\item[~]
$$ \E\left[\varphi_D^{B(x,\eps')}(x)^2\right] = \E\left[\varphi_D^{B(x,\eps)}(x)^2\right] + \E\left[\left((\tilde{h}^{B(x,\eps)},A_x^{\eps'})-(\tilde{h}^{B(x,\eps')},A_x^{\eps'})\right)^2\right]$$
and the first point follows.\\
Now, define $2^{-n} \leq \eps < 2^{-(n-1)}$ and $n_0:=\inf\{m\leq n ~:~2^{-m}<\delta\}$. Then $\E\left[\varphi_D^{B(x,\eps)}(x)^2\right] \leq \E\left[\varphi_D^{B(x,2^{-n})}(x)^2\right] $.
Using the above identity several times we obtain
$$ \E\left[\varphi_D^{B(x,2^{-n})}(x)^2\right] = \E\left[\varphi_D^{B(x,2^{-n_0})}(x)^2\right] + \sum_{m=n_0+1}^{n-1}\E\left[\left( (\tilde{h}^{B(x,2^{-(m-1)})},A_x^{2^{-m}}) - (\tilde{h}^{B(x,2^{-m})},A_x^{2^{-m}}) \right)^2\right].$$
We can bound this by
$$\E\left[ (h^D,A_x^{2^{-n_0}})^2 \right] + \sum_{m=n_0+1}^{n-1}\E\left[ (\tilde{h}^{B(x,2^{-(m-1)})},A_x^{2^{-m}})^2\right].$$
With FMTD's second moment and \eqref{s_mean_kernel_scaling}, we have $\E\left[(\tilde{h}^{B(0,2\eps)},A_x^\eps)^2\right] = \eps^{2\alpha+d}\E\left[(h^D,A_x^\eps(2\eps\cdot))^2\right] = \eps^{2\alpha-d}\E\left[(h^D,A_x^{1/2})^2\right]$, such that we get
\begin{align*}
    \E\left[\varphi_D^{B(x,\eps)}(x)^2\right] \leq C\left(1+\sum_{m=n_0+1}^{n-1} 2^{m(d-2\alpha)}\right),
\end{align*}
where the constant $C>0$ depends on $C_a$ and $n_0$ and thus on $\delta$. Computing the sum, we obtain 
\begin{align*}
    \E\left[\varphi_D^{B(x,\eps)}(x)^2\right] \leq C(\delta) \left(1+\eps^{2\alpha-d}\right)
\end{align*}
when $2\alpha<d$. 
\end{proof}

\bibliographystyle{alpha}
\bibliography{biblio}

\end{document}